\def\nc{\newcommand}
\newcommand{\les}{\lesssim}
\nc\pa{\partial}
\def\longequals{\mathbin{=\kern-2pt=}}
\nc\CC{\mathbb{C}}
\nc\RR{\mathbb{R}}
\nc\QQ{\mathbb{Q}}
\nc\ZZ{\mathbb{Z}}
\nc\NN{\mathbb{N}}
\newcommand{\wt}{\widetilde}
\nc\m[1]{\left| #1\right|}
\nc\norm[1]{\left\| #1\right\|}
\def\longequals{\mathbin{=\kern-2pt=}}
\newtheorem{theorem}{Theorem}[section]
\newtheorem{lemma}[theorem]{Lemma}
\newtheorem{corollary}[theorem]{Corollary}
\newtheorem{proposition}[theorem]{Proposition}
\newtheorem{remark}[theorem]{Remark}        % Use {\rm ...}
\numberwithin{equation}{section}
\begin{document}

\title[Stationary Navier-Stokes equations]
{Stationary Navier-Stokes equations with critically singular external forces: 
existence and stability results}

\author[Tuoc Van Phan]
{Tuoc Van Phan }
\address{Department of Mathematics,
University of Tennessee,
277 Ayres Hall, 1403 Circle Drive,
Knoxville, TN 37996.}
\email{phan@math.utk.edu}

\author[Nguyen Cong Phuc]
{Nguyen Cong Phuc$^{*}$}
\address{Department of Mathematics,
Louisiana State University,
303 Lockett Hall, Baton Rouge, LA 70803, USA.}
\email{pcnguyen@math.lsu.edu}

\thanks{$^*$Supported in part by NSF Grant DMS-0901083}

\begin{abstract} We show the unique existence of solutions to stationary  Navier-Stokes equations with small 
singular external forces belonging to a critical space. 
To the best of our knowledge, this is the largest critical space that is available up to now for this kind of existence. 
This result can be viewed  as the stationary counterpart of the existence result obtained by H. Koch and D. Tataru 
for the free non-stationary Navier-Stokes equations
with initial data in $\textup{BMO}^{-1}$. 
The stability of the stationary solutions 
in such spaces is also obtained by a series  of sharp  estimates for  resolvents of a singularly perturbed operator and the 
corresponding semigroup.
\end{abstract}

\maketitle

\section{Introduction}\label{Introduction}

In this paper we address the existence and stability problem for the forced stationary 
Navier-Stokes equations describing the motion of incompressible fluid in the whole space $\RR^n$, $n\geq 3$:
\begin{eqnarray}\label{NV}
\left\{\begin{array}{rcl}
U\cdot\nabla U +\nabla P &=& \Delta U + F,\\
\nabla\cdot U&=&0.
\end{array}
\right.
\end{eqnarray}
Here $U=(U_1, \dots, U_n): \mathbb{R}^n \mapsto \mathbb{R}^n$ is an unknown velocity of the fluid, 
$P:\mathbb{R}^n \mapsto \mathbb{R}$ is an unknown pressure, and $F=(F_1,\dots, F_n): \mathbb{R}^n \mapsto \mathbb{R}^n$ 
is a given external force potentially with strong singularities. 

To put our results in perspective, let us first discuss related results 
concerning the Cauchy problem for the free non-stationary Navier-Stokes equations  with possibly
 irregular initial data:
\begin{eqnarray}\label{Cauchy}
\left\{\begin{array}{rcll}
u_t+ u\cdot\nabla u +\nabla p &=& \Delta u, \qquad & {\rm in~}  \RR^n \times (0,\infty),\\
\nabla\cdot u&=&0, \qquad & {\rm in~}  \RR^n \times (0,\infty),\\
u(0, \cdot)&=& u_0, \qquad & {\rm in~}  \RR^n.
\end{array}
\right.
\end{eqnarray}

In 1984, T. Kato \cite{Ka1} initiated the study of \eqref{Cauchy} with initial data belonging to the 
space $L^n(\RR^n)$ and obtained global existence in a subspace 
of $C([0,\infty), L^n)$ provided the norm $\norm{u_0}_{L^n}$ is small enough. This kind of global 
existence with small initial data continues to hold also for homogeneous Morrey spaces 
$\mathcal{M}^{p, \,p}(\RR^n)$, $1\leq p\leq n$; see \cite{Ka2}, and also \cite{Tay, KY2}. Here for $1\leq p<\infty$
 and $0<\lambda\leq n$,  we say that a function 
$f\in L^{p}_{\rm loc}(\RR^n)$ belongs to the Morrey space $\mathcal{M}^{p,\, \lambda}(\RR^n)$ provided 
that its norm
$$\norm{f}_{\mathcal{M}^{p,\, \lambda}(\RR^n)}= \sup_{B_r(x_0)\subset\RR^n} \left\{ r^{\lambda -n}\int_{B_r(x_0)} |f(x)|^p dx
\right\}^{\frac{1}{p}}<+\infty.$$
When $p=1$ one allows  $f$ to be   a locally finite measure in $\RR^n$ in which case the $L^1$ norm should
 be replaced by the total variation. Note that our notation of Morrey spaces in this paper
is different from those in, e.g., \cite{Ka2, Tay, KY1, KY2}.

Later in 2001, H. Koch and D. Tataru \cite{KT} showed that global well-posedness of the Cauchy problem holds as well for 
small initial data in the space $\text{BMO}^{-1}$. This space  can be defined as the space of all distributional divergences of 
$\text{BMO}$ 
vector fields.   It is well-known that the following continuous embeddings hold
\begin{equation}\label{spacesembed}
L^n\subset \mathcal{M}^{p, \,p}\subset \text{BMO}^{-1}\subset B^{-1}_{\infty,\, \infty},
\end{equation}
where the last one is a homogeneous Besov space consisting of distributions $f$ for which the norm 
$$\norm{f}_{B^{-1}_{\infty,\, \infty}}=\sup_{t>0} t^{\frac{1}{2}} \norm{e^{t\Delta}f(\cdot)}_{L^{\infty}}<+\infty.$$

On the other hand, it has been shown  recently by J. Bourgain and N. Pavlov\'{i}c \cite{BP} that the Cauchy problem 
with initial data in $B^{-1}_{\infty,\, \infty}$ is ill-posed no matter how small the initial data are. See also  \cite{M-S}
for an earlier result on a Navier-Stokes like equation, and the recent paper \cite{Yo} for ill-posedness in a space even 
smaller than $B^{-1}_{\infty,\, \infty}$.

All of the spaces appear in \eqref{spacesembed} are invariant with respect to the scaling 
$f(\cdot)\rightarrow \lambda f(\lambda \cdot), \lambda>0,$ in the sense that $\norm{f}_{E}=\norm{\lambda f(\lambda \cdot)}_{E}$ 
for all $\lambda>0$.  
Thus up to now $\text{BMO}^{-1}$ is the largest space invariant under such a scaling on which the 
Cauchy problem is well-posed for small initial data.

However, the situation is much more subtle when it comes to  forced stationary Navier-Stokes equations. 
It is well-known that  system \eqref{NV} is invariant under the scaling $(U, P, F)\mapsto (U_\lambda, P_\lambda, F_\lambda)$, 
where $U_\lambda= \lambda U(\lambda \cdot)$, $P_\lambda= \lambda^2 P(\lambda \cdot)$ and $F_\lambda= \lambda^3 F(\lambda \cdot)$ 
for all $\lambda>0$. Moreover, it can be recast into an integral equation
\begin{equation*}%\label{SN-S}
U=\Delta^{-1} \mathbb{P} \nabla\cdot (U \otimes U) -\Delta^{-1} \mathbb{P} F,
\end{equation*}
where $\mathbb{P}= \text{Id}-\nabla \Delta^{-1} \nabla \cdot$ is the Helmholtz-Leray projection onto the vector fields of
 zero divergence. Thus
in order to give a meaning to the nonlinear term in \eqref{NV} one wants the solution $U$ to
be at least in $L^{2}_{\rm loc}(\RR^n)$. On the other hand, the largest Banach space $X\subset L^{2}_{\rm loc}(\RR^n)$
 that is invariant 
under translation and that $\norm{U_\lambda}_{X}=\norm{U}_X$ is the Morrey space $\mathcal{M}^{2,\, 2}$ (see \cite{Me}).
 Thus one is tempted to 
look for solutions in $\mathcal{M}^{2,\, 2}$ under the smallness condition 
$$\norm{(-\Delta)^{-1} F}_{\mathcal{M}^{2,\, 2}}\leq \delta.$$

However, as noted in \cite{BBIS}, it seems impossible to prove such existence results under this condition
 as for $U\in \mathcal{M}^{2,\, 2}$
the matrix $U\otimes U$  would belong to $\mathcal{M}^{1,\, 2}$, but unfortunately the first order Riesz potentials of functions in
 $\mathcal{M}^{1,\, 2}$ may not even belong to $L^{2}_{\rm loc}(\RR^n)$. It is worth mentioning that existence
 and uniqueness hold (see \cite{KY2}) 
 under a  ``bump" condition:
\begin{equation} \label{bumpcond}
\norm{(-\Delta)^{-1} F}_{\mathcal{M}^{2+\epsilon,\, 2+\epsilon}}\leq \delta
\end{equation}
for some $\epsilon>0$, in which case the solution $U$ belongs to  $\mathcal{M}^{2+\epsilon,\, 2+\epsilon}$.

We observe  that condition \eqref{bumpcond} is not sharp and propose in this paper the existence in a
 larger space called $\mathcal{V}^{1,\,2}(\RR^n)$
under the smallness condition 
\begin{equation*}
\norm{(-\Delta)^{-1} F}_{\mathcal{V}^{1,\, 2}}\leq \delta.
\end{equation*}
Here $\mathcal{V}^{1,\,2}(\RR^n)$ is the space of all locally square integrable functions $f$ for which there is 
a constant $C_{f}\geq 0$ such that the inequality 
$$\left(\int_{\RR^n} |\varphi|^2 |f|^2 dx\right)^{\frac{1}{2}} \leq C_{f} \left(\int_{\RR^n}|\nabla \varphi|^2
 dx\right)^{\frac{1}{2}}$$
holds for all $\varphi\in C^{\infty}_{0}(\RR^n)$.  The space $\mathcal{V}^{1,\,2}(\RR^n)$
 is well-known and it can be characterized as 
$$\mathcal{V}^{1,\, 2}(\RR^n)= \{ f\in L_{\rm loc}^{2}(\RR^n): \norm{f}_{\mathcal{V}^{1,\, 2}(\RR^n)}<+\infty \},$$
where the norm is defined by
$$\norm{f}_{\mathcal{V}^{1,\, 2}(\RR^n)}= \sup \left[\frac{\int_{K} |f|^2 dx}{{\rm cap}_{1, \, 2}(K)}\right]^{\frac{1}{2}}$$
with the supremum being taken over all compact sets $K\subset\RR^n$ of positive capacity ${\rm cap}_{1, \, 2}(K)$
 (see Section \ref{sec2}). Here the 
capacity ${\rm cap}_{1, \, 2}(\cdot)$ is defined for each compact set $K$ by
\begin{equation}\label{cap12sob}
{\rm cap}_{1,\, 2}(K)=\inf\Big\{\int_{\RR^n}|\nabla \varphi|^2  dx: \varphi\in C^\infty_0(\RR^n),
\varphi\geq \chi_K \Big\}.
\end{equation}

This space has been used for example in \cite{MV, HMV} for Riccati type equations and in \cite{PG, L-R, LR, L-RM}
under the notation $\dot{X}_1$  to obtain a sharp 
weak-strong uniqueness result for the Navier-Stokes equations. 
It is worth mentioning that capacities and spaces similar to
$\mathcal{V}^{1,\, 2}$ have also been used as indispensable tools to treat 
various Lane-Emden type and Riccati type equations in \cite{AP, PhV, Ph}.

Note that we have the following continuous embedding
$$\mathcal{M}^{2+\epsilon,\, 2+\epsilon}(\RR^n)\subset\mathcal{V}^{1,\, 2}(\RR^n)\subset \mathcal{M}^{2,\, 2}(\RR^n)$$ 
for any $\epsilon>0$. The second inclusion is easy to see from the fact that
\begin{equation*}%\label{ballest}
 {\rm cap}_{1, \, 2}(B_r(x)) \simeq r^{n-2},
\end{equation*}
whereas the first one was obtained by C. Fefferman and D. H. Phong 
in the analysis  of Schr\"odinger  operators (see \cite{Fef}).  It is now known that the first inclusion can also be improved 
further by replacing the Morrey space on the left-hand side by other larger ones of Dini or Orlicz type (see \cite{ChWW, P}).

On the other hand, we remark that the inclusion $\mathcal{V}^{1,\, 2}(\RR^n)\subset \mathcal{M}^{2,\, 2}(\RR^n)$ is strict as 
by \cite[Proposition 3.6]{MV} there is an $f\in \mathcal{M}^{2,\, 2}(\RR^n)$ given in terms of the first order Riesz potential of a compactly supported 
measure such that $f$ fails to be in $\mathcal{V}^{1, \, 2}$. In other words, one  cannot take $\epsilon=0$ in the first inclusion. 

It is easy to see from the definition of ${\rm cap}_{1, \, 2}$ that it is translation invariant and that 
$${\rm cap}_{1, \, 2}(K)= \lambda^{2-n}{\rm cap}_{1, \, 2}(\lambda K) $$
for all $\lambda>0$ and compact sets $K\subset\RR^n$.
Thus the space  $\mathcal{V}^{1,\, 2}\subset \mathcal{M}^{2,\, 2}$ is also translation invariant and satisfies 
$\norm{U_\lambda}_{\mathcal{V}^{1,\, 2}}=\norm{U}_{\mathcal{V}^{1,\, 2}}$. Therefore, it seems that  $\mathcal{V}^{1,\, 2}$
 the best candidate for 
this problem as it includes all Morrey spaces of the form $\mathcal{M}^{2+\epsilon,\, 2+\epsilon}$ for any $\epsilon>0$.

Throughout of this paper, the notation $A \les B$ means that there is a universal constant $C>0$ such that $A \leq CB$. Also, 
for a vector function $f: \mathbb{R}^n \mapsto \mathbb{R}^n$, we say that $f \in \mathcal{V}^{1,\, 2}$ if and only if 
$|f| \in \mathcal{V}^{1,\, 2}$ and $\norm{f}_{\mathcal{V}^{1,\, 2}} = \norm{|f|}_{\mathcal{V}^{1,\, 2}}$. We are now ready to state
 the first result of the paper.
\begin{theorem} \label{Stationary-Sol} There exists a 
sufficiently small number $\delta_0 >0$ such that if 
$\norm{(-\Delta)^{-1}F}_{\mathcal{V}^{1,\, 2}} <\delta_0$, then the system of equations \eqref{NV} 
has unique solution $U$ satisfying
\[ \norm{U}_{\mathcal{V}^{1,\, 2}} \les  \norm{(-\Delta)^{-1}F}_{\mathcal{V}^{1,\, 2}}.\]
\end{theorem}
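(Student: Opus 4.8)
The plan is to recast \eqref{NV} as a fixed point problem and solve it by a contraction argument in $X=\mathcal{V}^{1,2}$. Applying the Leray projection $\mathbb{P}$ to the momentum equation annihilates the pressure (since $\mathbb{P}\nabla P=0$) and, writing $U\cdot\nabla U=\nabla\cdot(U\otimes U)$ thanks to $\nabla\cdot U=0$, turns \eqref{NV} into the integral equation
$$U=\mathcal{U}_0+B(U,U),\qquad \mathcal{U}_0:=(-\Delta)^{-1}\mathbb{P}F,\quad B(U,V):=-(-\Delta)^{-1}\mathbb{P}\,\nabla\cdot(U\otimes V).$$
I would then invoke the standard abstract lemma for quadratic equations: if $B:X\times X\to X$ is bounded bilinear with $\norm{B(U,V)}_X\le C_0\norm{U}_X\norm{V}_X$ and $\norm{\mathcal{U}_0}_X<\tfrac{1}{4C_0}$, the Picard iterates converge to a solution with $\norm{U}_X\le 2\norm{\mathcal{U}_0}_X$, unique in the ball of radius $\tfrac{1}{2C_0}$. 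Choosing $\delta_0$ below the threshold $\tfrac{1}{4C_0}$ yields existence, uniqueness, and the stated bound $\norm{U}_{\mathcal{V}^{1,2}}\les\norm{\mathcal{U}_0}_{\mathcal{V}^{1,2}}$.

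Thus everything reduces to two estimates in $\mathcal{V}^{1,2}$. First, the source estimate $\norm{\mathcal{U}_0}_{\mathcal{V}^{1,2}}\les\norm{(-\Delta)^{-1}F}_{\mathcal{V}^{1,2}}$; since $(-\Delta)^{-1}$ and $\mathbb{P}$ commute as Fourier multipliers, $\mathcal{U}_0=\mathbb{P}[(-\Delta)^{-1}F]$, so this is exactly the boundedness of $\mathbb{P}$, equivalently of the Riesz transforms $R_iR_j$, on $\mathcal{V}^{1,2}$. Second, and crucially, the bilinear bound $\norm{B(U,V)}_{\mathcal{V}^{1,2}}\les\norm{U}_{\mathcal{V}^{1,2}}\norm{V}_{\mathcal{V}^{1,2}}$.

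For the bilinear estimate I would first observe that $T:=(-\Delta)^{-1}\mathbb{P}\,\nabla\cdot$ is a convolution operator whose matrix multiplier is homogeneous of degree $-1$ and smooth away from the origin; hence its kernel $K$ is homogeneous of degree $1-n$ and bounded on the unit sphere, so $|K(x)|\les|x|^{1-n}$. This gives the clean pointwise domination $|B(U,V)(x)|\les I_1(|U\otimes V|)(x)=I_1(|U|\,|V|)(x)$, where $I_1$ is the first-order Riesz potential, and reduces matters to the potential estimate $\norm{I_1(fg)}_{\mathcal{V}^{1,2}}\les\norm{f}_{\mathcal{V}^{1,2}}\norm{g}_{\mathcal{V}^{1,2}}$ for $f=|U|$, $g=|V|\ge 0$. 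Using the capacity description of the norm, it then suffices to show, for every compact $K$, that $\int_K\big(I_1(fg)\big)^2\,dx\les {\rm cap}_{1,2}(K)\,\norm{f}_{\mathcal{V}^{1,2}}^2\norm{g}_{\mathcal{V}^{1,2}}^2$. For this I would bound $\chi_K$ by the capacitary potential $P_K$ (with $0\le P_K\le 1$, $P_K\ge 1$ on $K$, and $\norm{\nabla P_K}_{L^2}^2={\rm cap}_{1,2}(K)$), dualize $\norm{I_1(fg)}_{L^2(K)}$, split the resulting integral by Cauchy--Schwarz into contributions governed by the trace measures $f^2\,dx$ and $g^2\,dx$, and close the estimate with a two-weight (Kerman--Sawyer / Maz'ya--Verbitsky type) inequality for the Riesz potential $I_1$.

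The hard part will be this core potential estimate. Because $\mathcal{V}^{1,2}$ is defined through capacities, the product $fg$ of two multipliers is controlled only in a "half-order" sense, and the essential difficulty is to show that applying $I_1$ returns to $\mathcal{V}^{1,2}$ with no loss; this is precisely where the quadratic Riesz-potential and capacitary inequalities must be used sharply. By comparison, the boundedness of $\mathbb{P}$ on $\mathcal{V}^{1,2}$ needed for the source term is a secondary technical point, which I expect to settle from the Maz'ya--Verbitsky theory of multipliers, i.e.\ by checking that $R_iR_j$ preserves the trace-measure condition defining $\mathcal{V}^{1,2}$.
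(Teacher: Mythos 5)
Your proposal is correct and follows essentially the same route as the paper: the same fixed-point lemma applied to the same bilinear form on $\mathcal{V}^{1,\,2}$, with the source term handled by the boundedness of $\mathbb{P}$ on $\mathcal{V}^{1,\,2}$ and the bilinear bound reduced, via Cauchy--Schwarz on $\int_K |U|\,|V|\,dx$, to the Maz'ya--Verbitsky capacitary inequality for ${\rm\bf I}_1$ acting on trace measures (the implication (iii) $\Rightarrow$ (iv) of Theorem \ref{CHAR}, which is the content of \eqref{MVequi}). The only cosmetic difference is that you dominate $B(U,V)$ pointwise by ${\rm\bf I}_1*(|U|\,|V|)$ via the kernel bound, whereas the paper factors the operator through $\mathcal{V}^{1,\,2}_{-1}$ and invokes the boundedness of Riesz transforms on that scale; both reduce to the same core capacitary estimate.
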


To the best of our knowledge, up to now $\mathcal{V}^{1,\, 2}$ is the largest space that is invariant under 
translation and the scaling $f(\cdot)
\rightarrow \lambda f(\lambda\cdot)$ on which existence holds for the stationary Navier-Stokes equations with small external forces. 
See also Remark \ref{V12opt} below for further discussion about the optimality of $\mathcal{V}^{1,\, 2}$. 
Thus Theorem 
\ref{Stationary-Sol} can be viewed as the stationary counterpart of the result obtained by H. Koch and D. Tataru 
for the Cauchy problem with small 
initial data in $\text{BMO}^{-1}$ discussed earlier.

\begin{remark} Let $v=|(-\Delta)^{-1}F|^2$ and $\varphi:[0, \infty)\rightarrow [1, \infty)$ be an increasing function such that
$$\int_{1}^{\infty}\frac{1}{t \varphi(t)} dt <+\infty.$$
Suppose that 
\begin{equation}\label{Dinicon}
\sup_{B_r\subset\RR^n} r^{2-n}\int_{B_r} v(x)\varphi(v(x)r^2) dx <+\infty.
\end{equation}
Then by the result in \cite{ChWW} one has $(-\Delta)^{-1}F$ belongs to $\mathcal{V}^{1,\, 2}$. On the other hand, by taking for example 
$$\varphi(t)= \log(e+t)^{1+\delta} \quad or \quad \varphi(t)=\log(e+t) [\log\log(e+t)]^{1+\delta}$$
for some $\delta>0$, we see that condition  \eqref{Dinicon} is strictly weaker than the condition $(-\Delta)^{-1}F\in \mathcal{M}^{2+\epsilon, \, 2+\epsilon}$,
$\epsilon>0$, used in \cite{KY2}. 
\end{remark}

The second result of this paper is about the stability of   solutions to  stationary Navier-Stokes equations in the 
space $\mathcal{V}^{1,\, 2}$. To
this end,  we consider the non-stationary Navier-Stokes equations
\begin{eqnarray}\label{NSE}
\left\{\begin{array}{rcll}
u_t+ u\cdot\nabla u +\nabla p &=& \Delta u + F\qquad & {\rm in~}  \RR^n \times (0,\infty),\\
\nabla\cdot u&=&0 \qquad & {\rm in~}  \RR^n \times (0,\infty),\\
u(0, \cdot)&=& u_0 \qquad & {\rm in~}  \RR^n,
\end{array}
\right.
\end{eqnarray}
where $F$ is as in Theorem \ref{Stationary-Sol}  which is time-independent, and $u_0$ is  a divergence-free vector 
field in $\mathcal{V}^{1,\, 2}$. 
Our stability results say that if the difference $\norm{u_0 -U}_{\mathcal{V}^{1,\, 2}}$, where $U$ is as obtained in Theorem \ref{Stationary-Sol}, 
is sufficiently small, then there exists a unique global in time solution $u$ of \eqref{NSE}. Moreover, as time $t$ goes to infinity
the non-stationary solution $u$  of \eqref{NSE} converges to $U$  in a suitable space. 
  
\begin{theorem} \label{stability-th} Let $\sigma_0 \in (1/2,1)$. 
There exist two positive numbers $\epsilon_0$ and $\delta_1$ with $\delta_1 \leq \delta_0$ 
such that for  $\norm{(-\Delta)^{-1}F}_{\mathcal{V}^{1,\, 2}} <\delta_1$, the following existence and uniqueness results hold.
For every $u_0$ satisfying $\nabla \cdot u_0 =0$ and $\norm{u_0 - U}_{\mathcal{V}^{1,\, 2}} < \epsilon_0$, 
there exists uniquely a time-global solution $u(x,t)$ of \eqref{NSE} satisfying
\begin{equation}\label{V1/2}
\sup_{t >0} t^{1/4}\Vert (-\Delta)^{1/4}(u(\cdot,t) - U)\Vert_{\mathcal{V}^{1,\, 2}} \leq C  \norm{u_0 - U}_{\mathcal{V}^{1,\, 2}},
\end{equation}
with the initial condition understood in the sense that  
\[ \sup_{t >0}t^{\alpha/2}\Vert (-\Delta)^{\alpha/2}(u(\cdot,t) - u_0)\Vert_{\mathcal{V}^{1,\, 2}}
 \leq C \norm{u_0 - U}_{\mathcal{V}^{1,\, 2}} \]
for every $\alpha \in [-1, 0]$. Moreover, for every $\sigma \in [0, \sigma_0]$, the solution $u$ enjoys the time-decay estimate
\begin{equation} \label{sigma-u.est}
 \sup_{t >0}t^{\sigma/2}\Vert (-\Delta)^{\sigma/2}(u(\cdot,t) - U)\Vert_{\mathcal{V}^{1,\, 2}} 
\leq C(\sigma_0)  \norm{u_0 - U}_{\mathcal{V}^{1,\, 2}}.
\end{equation}
\end{theorem}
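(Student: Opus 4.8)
The plan is to linearize around the stationary solution $U$ from Theorem~\ref{Stationary-Sol} and solve the resulting perturbed system by a fixed-point scheme adapted to the critical space $\mathcal{V}^{1,\,2}$. First I would set $w=u-U$ and $q=p-P$, subtract the stationary system \eqref{NV} from \eqref{NSE}, and use $\nabla\cdot U=\nabla\cdot w=0$ to put both linear coupling terms in divergence form, $U\cdot\nabla w+w\cdot\nabla U=\nabla\cdot(U\otimes w+w\otimes U)$. Then $w$ solves
\[ w_t-\Delta w+\mathbb{P}\,\nabla\cdot(U\otimes w+w\otimes U)=-\mathbb{P}\,\nabla\cdot(w\otimes w),\qquad \nabla\cdot w=0,\quad w(0)=u_0-U, \]
where $\mathbb{P}$ is the Helmholtz--Leray projection. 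Introducing the linearized, singularly perturbed operator $\mathcal{L}=-\Delta+B$ with $Bw=\mathbb{P}\,\nabla\cdot(U\otimes w+w\otimes U)$, I would pass to the mild formulation
\[ w(t)=e^{-t\mathcal{L}}w_0-\int_0^t e^{-(t-s)\mathcal{L}}\,\mathbb{P}\,\nabla\cdot(w\otimes w)(s)\,ds, \]
and seek a solution by contraction in $\mathcal{X}=\{w:\ \norm{w}_{\mathcal{X}}:=\sup_{t>0}t^{1/4}\norm{(-\Delta)^{1/4}w(t)}_{\mathcal{V}^{1,\,2}}<\infty\}$, which matches the scaling in \eqref{V1/2}.

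The analytic core is a family of sharp estimates for the semigroup $e^{-t\mathcal{L}}$ on the $\mathcal{V}^{1,\,2}$-scale. Because $U$ lies in the critical space $\mathcal{V}^{1,\,2}$, its defining multiplier inequality makes multiplication by $U$ a bounded map from the homogeneous Sobolev space $\dot H^1$ into $L^2$ with norm $\les\norm{U}_{\mathcal{V}^{1,\,2}}$; combined with the $\nabla\cdot$ and the two-derivative gain of the free resolvent, this shows $B(\lambda-\Delta)^{-1}$ is bounded on $\mathcal{V}^{1,\,2}$ uniformly for $\lambda$ in a sector, with operator norm $\les\norm{U}_{\mathcal{V}^{1,\,2}}\le\delta_1$. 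Choosing $\delta_1$ small, a Neumann series yields $(\lambda+\mathcal{L})^{-1}=(\lambda-\Delta)^{-1}(I+B(\lambda-\Delta)^{-1})^{-1}$, so $\mathcal{L}$ generates an analytic semigroup and, via the contour representation $e^{-t\mathcal{L}}=\frac{1}{2\pi i}\int_\Gamma e^{\lambda t}(\lambda+\mathcal{L})^{-1}\,d\lambda$, inherits from $e^{t\Delta}$ the smoothing bounds $\norm{(-\Delta)^{\sigma/2}e^{-t\mathcal{L}}f}_{\mathcal{V}^{1,\,2}}\les t^{-\sigma/2}\norm{f}_{\mathcal{V}^{1,\,2}}$ for $\sigma\in[0,\sigma_0]$, together with the companion bound for $e^{-t\mathcal{L}}\mathbb{P}\nabla\cdot$ needed in the Duhamel term.

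With these linear bounds I would close the fixed point. The linear term obeys $\norm{e^{-t\mathcal{L}}w_0}_{\mathcal{X}}\les\norm{u_0-U}_{\mathcal{V}^{1,\,2}}$. For the bilinear term I would prove a product estimate controlling $w\otimes v$ through $\norm{(-\Delta)^{1/4}w}_{\mathcal{V}^{1,\,2}}$ and $\norm{(-\Delta)^{1/4}v}_{\mathcal{V}^{1,\,2}}$, then apply the semigroup smoothing to absorb $\mathbb{P}\nabla\cdot$ and integrate the resulting integrable time singularity, obtaining $\norm{\int_0^t e^{-(t-s)\mathcal{L}}\mathbb{P}\nabla\cdot(w\otimes v)\,ds}_{\mathcal{X}}\les\norm{w}_{\mathcal{X}}\norm{v}_{\mathcal{X}}$. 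The standard Picard lemma for $w=y+B(w,w)$ then gives, once $\epsilon_0$ is small enough that $\norm{e^{-t\mathcal{L}}w_0}_{\mathcal{X}}$ is small, a unique solution with $\norm{w}_{\mathcal{X}}\les\norm{u_0-U}_{\mathcal{V}^{1,\,2}}$, which is exactly \eqref{V1/2}. Estimates \eqref{sigma-u.est} for $\sigma\in[0,\sigma_0]$ and the initial-trace bounds for $\alpha\in[-1,0]$ follow by feeding the solution's $\mathcal{X}$-bound back into the same semigroup and bilinear inequalities with the corresponding exponents, while uniqueness is inherited from the contraction.

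The main obstacle I anticipate is establishing the sharp semigroup and resolvent estimates for $\mathcal{L}=-\Delta+B$ on the $\mathcal{V}^{1,\,2}$-scale, uniformly for $\sigma$ up to $\sigma_0$. Since $U$ only belongs to the critical space, the perturbation $B$ is not lower order in any subcritical sense, so standard relative-compactness arguments do not apply; the Fefferman--Phong/capacity structure of $\mathcal{V}^{1,\,2}$ is precisely what is needed to bound $B$ relative to $-\Delta$, and the smallness of $\norm{U}_{\mathcal{V}^{1,\,2}}\le\delta_1$ is essential to run the Neumann series rather than merely a convenience. The restriction $\sigma_0<1$ reflects the limit of the regularity that the perturbed semigroup can gain before the singular coefficient destroys the uniform smoothing estimate.
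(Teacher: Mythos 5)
Your strategy coincides with the paper's own proof: linearize around $U$, introduce the perturbed operator $\mathcal{A}=-\Delta+\mathcal{B}$ with $\mathcal{B}[f]=\mathbb{P}\nabla\cdot(U\otimes f+f\otimes U)$, invert $\lambda-\mathcal{A}$ by a Neumann series using the smallness of $\norm{U}_{\mathcal{V}^{1,\,2}}$, define $e^{-t\mathcal{A}}$ by a Dunford integral with the resulting smoothing bounds, and run a Picard iteration in the space normed by $\sup_{t>0}t^{1/4}\norm{(-\Delta)^{1/4}w(t)}_{\mathcal{V}^{1,\,2}}$; the product estimate you need is exactly the capacitary Sobolev embedding (Theorem \ref{Sobcap} and Lemma \ref{Ho-the}), and the feeding-back step for $\sigma\in[0,\sigma_0]$ and $\alpha\in[-1,0]$ is carried out in Proposition \ref{stability-lemma}.

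The one genuine omission is the passage from the mild formulation \eqref{inte.form} back to the differential equation \eqref{KZ}: your contraction produces only a solution of the integral equation, while the theorem asserts a solution of \eqref{NSE}. In this setting that step is not routine, because $D(\mathcal{A})$ need not be dense in $\mathcal{V}^{1,\,2}_s$, so the standard semigroup fact that $t^{-1}(e^{-t\mathcal{A}}f-f)\to-\mathcal{A}f$ as $t\to0^{+}$ is unavailable for general $f$ in the space where the iteration lives. The paper fills this in with Proposition \ref{differentiability}, a quantitative differentiability estimate for $e^{-t\mathcal{A}}$ at $t=0$ measured in a strictly weaker space $\mathcal{V}^{1,\,2}_{\sigma}$ with $\sigma\leq s-2$, combined with H\"older-in-time continuity of $t\mapsto w(t)$ in Proposition \ref{KZ.solution}; this is also where the additional resolvent thresholds $\epsilon_1(1/2,-3/2)$ and $\epsilon_1(-3/2,\sigma_1)$ enter the choice of $\delta_1$. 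Your write-up would need an analogous argument to justify that the fixed point actually solves the PDE and hence that the uniqueness class in the theorem is the one claimed.
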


In Theorem \ref{stability-th} the notation $(-\Delta)^{s/2}$, $s\in\RR$, stands for a non-local  fractional derivative of order $s$ 
whose precise definition will be given in the next section.  
We now have the following remarks on Theorem \ref{stability-th}.

\begin{remark} 
\begin{itemize}
\item[\textup{(i)}] At each time $t>0$, both of the solutions $u$ and $U$ may not be in the same space. 
\item[\textup{(ii)}] When $\sigma =0$, the estimate \eqref{sigma-u.est} provides the 
Lyapunov stability of the stationary solution $U$. 
Moreover, it also implies that the solution $u$ remains in $\mathcal{V}^{1,\, 2}$ at all time.
\item[\textup{(iii)}] A similar stability result in the Morrey 
space $\mathcal{M}^{2+\epsilon,\, 2+\epsilon}$ with $\epsilon>0$ can be found
\cite{KY2}.
\end{itemize}
\end{remark}

We would like to emphasize that one can easily prove the existence of the solution $u$ satisfying 
only \eqref{sigma-u.est} with $\sigma =0$ by following the approach of Calder{\'{o}}n \cite{CC, CCP} 
and Cannone \cite{MC} (see Remark \ref{MC-remark} below for more details). This approach, 
however, only yields the Lyapunov stability of $U$. 
On the other hand, we observe that the approach to prove the stability of $U$ in \cite{BBIS} 
is not enough for our purpose. 
To prove Theorem \ref{stability-th} we instead
follow a semi-group approach in a  spirit similar to that of \cite{KY2}. This approach, though complicated, 
gives us both Lyapunov and asymptotics stability of $U$ at the same time, as well as 
the regularity of solutions in the fractional Sobolev spaces associated to 
$\mathcal{V}^{1,\, 2}$.    
Due to possible strong singularities carried along by stationary solutions, 
new sharp and delicate  decay estimates must be 
obtained  for  resolvents of a singularly perturbed operator and the corresponding semigroup. 
We achieve those by tactically combining spectral theory methods
with  some hard analysis in potential theory and harmonic analysis such as capacitary inequalities and 
weighted norm inequalities for singular integrals 
and multiplier operators.

In this paper, besides the main purpose of studying  stationary Navier-Stokes 
equations, we also give a careful analysis on Sobolev spaces 
associated to $\mathcal{V}^{1,\, 2}$ such as  Sobolev type embedding theorems and interpolations 
between them, some of which have been developed in \cite{MS1, MS2}. We expect that such an analysis, as well as the complicated study of the resolvent of the perturbed Laplacian, will  be useful for other purposes as well.

The organization of the paper is as follows. Section \ref{sec2} is devoted to some preliminaries on potential theory and 
harmonic analysis to understand the space $\mathcal{V}^{1,\, 2}(\RR^n)$ and homogeneous Sobolev spaces associated to it.
In Section \ref{sec3} we show the existence and uniqueness of stationary solutions in the space $\mathcal{V}^{1, \, 2}(\RR^n)$.
Sharp decay estimates for resolvents and analytic semigroups generated by singularly  perturbed operators 
are carried out in Section \ref{sec4}. Finally, the analysis in Section \ref{sec4} is applied in Section \ref{sec5} to obtain
the stability of stationary solutions in $\mathcal{V}^{1,\, 2}(\RR^n)$.
 
\section{Preliminaries}\label{sec2}

For a  non-negative locally finite measure $\mu$ in $\RR^n$, $n\geq 3$, the  Riesz potential of order $\gamma\in (0, n)$ of $\mu$ 
is defined  by
$${\rm\bf I}_{\gamma}*\mu(x)= c(n,\gamma)\int_{\RR^n} \frac{d\mu(y)}{|x-y|^{n-\gamma}}, \qquad x\in\RR^n,$$
where 
$$c(n, \gamma)=\Gamma(\frac{1}{2}(n-\gamma))/2^\gamma \pi^{n/2}\Gamma(\frac{1}{2}\gamma)$$ 
is a normalizing constant. It is known that (see, e.g., \cite{La}) a necessary and sufficient condition 
for the finiteness almost everywhere of ${\rm\bf I}_{\gamma}*\mu$ is the inequality
$$\int_{|y|>1} \frac{d\mu(y)}{|y|^{n-\gamma}}<+\infty,$$
in which case $\mu$ belongs to  the space of tempered distributions $\mathcal{S}'(\RR^n)$.

For every compact set   $K\subset\RR^n$ it is known that one has the following equivalence 
\begin{equation*}
{\rm cap}_{1, \, 2}(K) \simeq \inf\{\norm{f}_{L^2(\RR^n)}^{2}: f\geq 0, {\rm\bf I}_{1}*f\geq
1 {\rm ~on~} K\},
\end{equation*}
where the capacity ${\rm cap}_{1,\, 2}(K)$ is as defined in \eqref{cap12sob}.

We begin with the following special case of  Theorem 2.1 in \cite{MV}.

\begin{theorem}\label{CHAR} Let  $\nu$ be  a non-negative locally finite
measure in $\RR^n$, $n\geq 3$. Then  the following properties of $\nu$
are equivalent.

{\rm (i)} There is a constant $A_1>0$ such that
\begin{equation*}%\label{uchar}
\int_{\RR^n} u^2 d\nu \leq A_1 \int_{\RR^n} |\nabla u|^{2}dx
\end{equation*}
for all  $u\in C_0^{\infty}(\RR^n)$.

{\rm (ii)} There is a constant $A_2>0$ such that
\begin{equation*}%\label{trace}
\int_{\RR^n} ({\rm\bf I}_{1}*f)^{2}d\nu \leq A_2 \int_{\RR^n} f^{2}dx
\end{equation*}
for all non-negative $f\in L^{2}(\RR^n)$.

{\rm (iii)} There is a constant $A_3>0$ such that
\begin{equation} \label{Mazmeasure}
\nu(K)\leq A_3\, {\rm cap}_{1, \, 2}(K)
\end{equation}
for all compact sets $K \subset \RR^n$.

{\rm (iv)} There is a constant $A_4>0$ such that

\begin{equation*}%\label{GK}
\int_{K} ({\rm\bf I}_{1}*\nu)^{2}dx   \leq A^{2}_4\, {\rm cap}_{1, \, 2}(K)
\end{equation*}
for all compact sets $K\subset\RR^n$.

Moreover, the least possible values of the constants $A_1$, $A_2$, $A_3$, and $A_4$ are  comparable to each other.
\end{theorem}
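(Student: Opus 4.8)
The plan is to prove a cycle of implications, reserving duality and the positivity of the Riesz kernel for the ``soft'' equivalences and Maz'ya's capacitary strong-type inequality together with equilibrium potentials for the ``hard'' ones. Throughout I would freely use the stated equivalence ${\rm cap}_{1,\,2}(K)\simeq \inf\{\norm{f}_{L^2}^2:\ f\ge 0,\ {\rm\bf I}_1*f\ge 1 \text{ on } K\}$ and Plancherel's identity $\int_{\RR^n}|\nabla u|^2\,dx=\norm{(-\Delta)^{1/2}u}_{L^2}^2$. I would first dispatch (i)$\Leftrightarrow$(ii): for $u\in C_0^\infty$ set $f=(-\Delta)^{1/2}u$, so that $u={\rm\bf I}_1*f$ and $\int|\nabla u|^2\,dx=\int f^2\,dx$. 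Hence (i) is \emph{exactly} the inequality $\int({\rm\bf I}_1*f)^2\,d\nu\le A\int f^2\,dx$ for $f$ in the dense range of $(-\Delta)^{1/2}$ on $C_0^\infty$, i.e.\ for $f$ of arbitrary sign. That this holds for all $f\in L^2$ if and only if it holds for $f\ge 0$ is immediate from $|{\rm\bf I}_1*f|\le {\rm\bf I}_1*|f|$ (positivity of the kernel), and a routine density argument gives (i)$\Leftrightarrow$(ii) with $A_1\simeq A_2$.

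Next, (i)$\Rightarrow$(iii): for any admissible $\varphi\ge\chi_K$ in \eqref{cap12sob} one has $\nu(K)\le\int\varphi^2\,d\nu\le A_1\int|\nabla\varphi|^2\,dx$, and taking the infimum over $\varphi$ yields $\nu(K)\le A_1\,{\rm cap}_{1,\,2}(K)$. The reverse (iii)$\Rightarrow$(i) is the genuinely potential-theoretic step: I would invoke Maz'ya's capacitary strong-type inequality $\int_0^\infty {\rm cap}_{1,\,2}(\{|u|>t\})\,d(t^2)\lesssim\int|\nabla u|^2\,dx$ and combine it with the layer-cake formula $\int u^2\,d\nu=\int_0^\infty\nu(\{|u|>t\})\,d(t^2)\le A_3\int_0^\infty{\rm cap}_{1,\,2}(\{|u|>t\})\,d(t^2)$. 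This closes the loop (i)$\Leftrightarrow$(ii)$\Leftrightarrow$(iii).

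It remains to bring in (iv). For (iv)$\Rightarrow$(iii), let $f_K\ge 0$ be the capacitary density of $K$, so that (up to a set of capacity zero) ${\rm supp}\,f_K\subset K$, ${\rm\bf I}_1*f_K\ge 1$ quasi-everywhere on $K$, and $\norm{f_K}_{L^2}^2\simeq{\rm cap}_{1,\,2}(K)$. Then Fubini (symmetry of the kernel) and Cauchy--Schwarz give
\[
\nu(K)\le\int_K({\rm\bf I}_1*f_K)\,d\nu=\int_{\RR^n} f_K\,({\rm\bf I}_1*\nu)\,dx\le\norm{f_K}_{L^2}\Big(\int_K({\rm\bf I}_1*\nu)^2dx\Big)^{1/2}\lesssim{\rm cap}_{1,\,2}(K),
\]
which is \eqref{Mazmeasure}. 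The passage from ``quasi-everywhere on $K$'' to ``$\nu$-a.e.\ on $K$'' uses the standard fact that $\nu$ charges no set of zero capacity, which itself is a consequence of (iv).

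The main obstacle is the converse (iii)$\Rightarrow$(iv), i.e.\ upgrading the measure bound to the pointwise potential bound. For a ball $B=B_r(x_0)$ I would split $\nu=\chi_{2B}\nu+\chi_{(2B)^c}\nu$: the local part is controlled by the dual trace inequality, $\int_B({\rm\bf I}_1*(\chi_{2B}\nu))^2dx\le A_2\,\nu(2B)\le A_2A_3\,{\rm cap}_{1,\,2}(2B)\simeq{\rm cap}_{1,\,2}(B)$, while for the tail a dyadic-annuli estimate gives ${\rm\bf I}_1*(\chi_{(2B)^c}\nu)(x)\lesssim\sum_{j\ge1}(2^jr)^{1-n}\nu(B_{2^{j+1}r})\lesssim\sum_{j\ge1}(2^jr)^{1-n}(2^jr)^{n-2}\simeq r^{-1}$ for $x\in B$, whence $\int_B({\rm\bf I}_1*(\chi_{(2B)^c}\nu))^2dx\lesssim r^{-2}\cdot r^n\simeq{\rm cap}_{1,\,2}(B)$. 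The delicate point is to pass from balls to arbitrary compact $K$, where naive subadditivity of capacity fails; here I expect to need the Kerman--Sawyer/Wolff-potential machinery (equivalently, the fact that the ball testing condition already characterizes the two-weight inequality (ii)), which is precisely where the real work of \cite{MV} lies. Finally, tracking the constants through each implication yields the asserted comparability of $A_1,\dots,A_4$.
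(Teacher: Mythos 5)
The paper does not actually prove this statement: it is imported verbatim as a special case of Theorem 2.1 of \cite{MV}, so the only meaningful comparison is with the argument in that reference. Your handling of the three conditions not involving (iv) is the standard and correct route: (i)$\Leftrightarrow$(ii) via $u={\rm\bf I}_1*\bigl((-\Delta)^{1/2}u\bigr)$, Plancherel, and positivity of the kernel; (i)$\Rightarrow$(iii) by testing on admissible $\varphi\ge\chi_K$; and (iii)$\Rightarrow$(i) by Maz'ya's capacitary strong-type inequality combined with the layer-cake formula. Up to the routine density bookkeeping you defer, the cycle (i)$\Leftrightarrow$(ii)$\Leftrightarrow$(iii) is complete.

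Both implications involving (iv), however, have genuine gaps. In (iv)$\Rightarrow$(iii) you posit a capacitary density $f_K\ge 0$ with ${\rm supp}\,f_K\subset K$; for ${\rm cap}_{1,\,2}$ the extremal is $f_K={\rm\bf I}_1*\mu_K$ with $\mu_K$ the equilibrium measure \emph{on} $K$, and this function is strictly positive on all of $\RR^n$, so the support claim is false. Without it your chain breaks twice: Fubini gives $\int_K({\rm\bf I}_1*f_K)\,d\nu=\int f_K\,{\rm\bf I}_1*(\chi_K\nu)\,dx$, not $\int f_K\,({\rm\bf I}_1*\nu)\,dx$ (the latter can even be $+\infty$, since typically ${\rm\bf I}_1*\nu\simeq |x|^{-1}$ and $f_K\simeq|x|^{1-n}$ at infinity); and Cauchy--Schwarz then produces the \emph{global} energy $\norm{{\rm\bf I}_1*(\chi_K\nu)}_{L^2(\RR^n)}$, whereas (iv) controls only $\int_K({\rm\bf I}_1*\nu)^2dx$ --- these are not comparable (take $\nu$ a surface measure and $K$ its support: the local integral vanishes while the energy does not). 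The argument of \cite{MV} is different in kind: (iv) says precisely that the absolutely continuous measure $({\rm\bf I}_1*\nu)^2dx$ satisfies (iii), so the already-proved implication (iii)$\Rightarrow$(i) yields $\norm{u\,{\rm\bf I}_1*\nu}_{L^2}\les \norm{\nabla u}_{L^2}$, and then the integration by parts $\int u^2d\nu = 2\int u\,\nabla u\cdot\nabla({\rm\bf I}_2*\nu)\,dx$ together with the pointwise bound $|\nabla({\rm\bf I}_2*\nu)|\les {\rm\bf I}_1*\nu$ gives (i). As for (iii)$\Rightarrow$(iv), your ball computation (local part by the dual trace inequality, tail by dyadic annuli and the growth bound $\nu(B_r)\les r^{n-2}$) is correct, but you explicitly defer the passage from balls to arbitrary compact sets to ``Kerman--Sawyer/Wolff-potential machinery''; since that passage is the substance of the implication, the proof is incomplete as written. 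In short, condition (iv) is not correctly attached to the otherwise sound cycle.
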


We denote by $\mathfrak{M}_{+}^{1,\, 2}$ the class of non-negative measure $\nu$ 
for which \eqref{Mazmeasure} holds
for all compact sets $K\subset\RR^n$ with norm
$$\norm{\nu}_{\mathfrak{M}_{+}^{1,\, 2}}= \sup\left\{ \frac{\nu(K)}{{\rm cap}_{1, \, 2}(K)}: 
K ~{\rm compact}~\subset\RR^n, {\rm cap}_{1, \, 2}(K)>0\right\}.$$

For our purpose we also need the following spaces. For $\alpha\in \RR$ we define
\begin{equation}\label{spacedef}
\mathcal{V}^{1,\, 2}_{\alpha}= \{f\in \mathcal{S}'/\mathcal{P}: 
\, \text{with norm}\, \norm{f}_{\mathcal{V}^{1,\, 2}_{\alpha}}= \norm{(-\Delta)^{\frac{\alpha}{2}} f}_{\mathcal{V}^{1,\, 2}}<+\infty\},
\end{equation}
where $\mathcal{P}$ is the set of all polynomials in $\RR^n$, and  $(-\Delta)^{\frac{\alpha}{2}} f$ is defined for each $f\in \mathcal{S}'/\mathcal{P}$ by 
$$(-\Delta)^{\frac{\alpha}{2}} f= \mathcal{F}^{-1}(|\xi|^{\alpha} \mathcal{F}(f)(\xi))\in \mathcal{S}'/\mathcal{P}.$$

We remark that $\mathcal{S}'/\mathcal{P}$ can be identified with $\mathcal{S}_{\infty}'$ where $\mathcal{S}_{\infty}$ is a 
closed  subspace of 
$\mathcal{S}$ characterized by
$$\varphi\in \mathcal{S}_{\infty} \Longleftrightarrow  \langle P, \varphi\rangle =0, \qquad \forall\, P\in \mathcal{P}.$$

Therefore, explicitly we have 
$$\langle (-\Delta)^{\frac{\alpha}{2}} f, \varphi\rangle = \langle f, \mathcal{F}(|\xi|^{\alpha} \mathcal{F}^{-1}(\varphi)) \rangle \qquad \forall \,
\varphi\in \mathcal{S}_{\infty}.$$
This is well defined as one can check, for any $\alpha\in \RR$, that $\mathcal{F}(|\xi|^{\alpha} \mathcal{F}^{-1}(\varphi))$ 
belongs to $\mathcal{S}_{\infty}$ whenever
$\varphi$ belongs to $\mathcal{S}_{\infty}$.

Since zero is the only polynomial that belongs to ${\mathcal{V}^{1,\, 2}}$ we see that ${\mathcal{V}^{1,\, 2}}$  injects 
in $\mathcal{S}'/\mathcal{P}$. As a consequence the condition 
$$\norm{(-\Delta)^{\frac{\alpha}{2}} f}_{\mathcal{V}^{1,\, 2}}<+\infty$$
in \eqref{spacedef} simply means that there is a unique $h\in {\mathcal{V}^{1,\, 2}}$ that belongs to the equivalence class  $(-\Delta)^{\frac{\alpha}{2}} f$,
and thus we have $\norm{f}_{\mathcal{V}^{1,\, 2}_{\alpha}}=\norm{h}_{\mathcal{V}^{1,\, 2}}$.

It follows from Theorem \ref{CHAR} that a non-negative measure $\nu$ belongs to  $\mathfrak{M}^{1,\, 2}_{+}$ if and only if it belongs to $\mathcal{V}^{1,\, 2}_{-1}$ and moreover,
\begin{equation}\label{MVequi}
c_1\norm{\nu}_{\mathfrak{M}_{+}^{1,\, 2}}\leq \norm{\nu}_{\mathcal{V}^{1,\, 2}_{-1}}\leq c_2\norm{\nu}_{\mathfrak{M}_{+}^{1,\, 2}}
\end{equation}
for some constants $c_1$ and $c_2$ depending only on $n$.

As $\mathcal{V}^{1,\, 2}\subset \mathcal{M}^{2,\, 2}$ we see that $\mathcal{V}^{1,\, 2}_{\alpha}\subset \mathcal{M}^{2,\, 2}_{\alpha}$, where the space 
$\mathcal{M}^{2,\, 2}_{\alpha}$ is defined in a 
similar way based on $\mathcal{M}^{2,\, 2}$. Thus for $\alpha<1$ every equivalent class of  $\mathcal{V}^{1,\, 2}_{\alpha}$ has a canonical representative in $\mathcal{S}'(\RR^n)$;
see \cite{KY1, KY2, Bo}.

Our approach in this paper is based on the following boundedness property of 
Riesz transforms on the space $\mathcal{V}_{\alpha}^{1,\, 2}(\RR^n)$,
whose proof was already given in \cite{MV}.

\begin{theorem}\label{riesz} For any $j=1, \dots, n$ and $\alpha\in\RR$ one has 
$$\norm{R_j f}_{\mathcal{V}_{\alpha}^{1,\, 2}(\RR^n)}\leq C \norm{f}_{\mathcal{V}_{\alpha}^{1,\, 2}(\RR^n)},$$
where $R_j$ is the $j$-th Riesz transform defined by $R_j f=\partial_j (-\Delta)^{-\frac{1}{2}} f.$ 
\end{theorem}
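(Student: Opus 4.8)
The plan is to reduce the general case to the case $\alpha=0$ by a commutation argument, and then to recast the case $\alpha=0$ as a capacitary (trace) estimate that can be attacked by weighted singular integral theory. First I would observe that $R_j$ and $(-\Delta)^{\alpha/2}$ are both Fourier multiplier operators, with symbols $i\xi_j/|\xi|$ and $|\xi|^{\alpha}$ respectively. Both symbols are smooth and homogeneous away from the origin, so both operators map $\mathcal S_{\infty}$ into itself and therefore act on the quotient $\mathcal S'/\mathcal P\simeq \mathcal S_{\infty}'$; since the symbols commute pointwise, one has the identity
$$(-\Delta)^{\alpha/2} R_j f = R_j (-\Delta)^{\alpha/2} f \qquad \text{in } \mathcal S'/\mathcal P .$$
Writing $g=(-\Delta)^{\alpha/2}f$, which by \eqref{spacedef} satisfies $\|g\|_{\mathcal V^{1,\,2}}=\|f\|_{\mathcal V^{1,\,2}_{\alpha}}$, this reduces the claim to the single inequality $\|R_j g\|_{\mathcal V^{1,\,2}}\le C\|g\|_{\mathcal V^{1,\,2}}$, i.e. to the case $\alpha=0$.

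Next I would reformulate the case $\alpha=0$ using Theorem \ref{CHAR}. By the definition of $\mathcal V^{1,\,2}$ and the equivalence of (i) and (iii) there, membership $g\in\mathcal V^{1,\,2}$ is the same as the form-boundedness $d\nu_g=|g|^2\,dx\in\mathfrak M^{1,\,2}_{+}$, with $\|\nu_g\|_{\mathfrak M^{1,\,2}_{+}}\simeq \|g\|_{\mathcal V^{1,\,2}}^{2}$. Thus it suffices to prove that $|R_j g|^2\,dx\in\mathfrak M^{1,\,2}_{+}$ with comparable norm, that is, the capacitary bound
$$\int_K |R_j g|^2\,dx \le C\,\|g\|_{\mathcal V^{1,\,2}}^{2}\,{\rm cap}_{1,\,2}(K)$$
for every compact $K\subset\RR^n$. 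Dualizing the left-hand side in $L^2(K)$ and using $R_j^{*}=-R_j$, this is equivalent to the estimate $\left|\int g\,R_j\phi\,dx\right|\le C\|g\|_{\mathcal V^{1,\,2}}\,{\rm cap}_{1,\,2}(K)^{1/2}$ for all $\phi\in L^2$ with $\operatorname{supp}\phi\subset K$ and $\|\phi\|_{L^2}\le 1$.

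The crux, and the step I expect to be the main obstacle, is precisely this last weighted estimate for the singular integral $R_j$. Because $R_j$ is non-local, there is no pointwise domination $|R_jg|\les|g|$, so the capacitary condition on $|g|^2$ must be transported to $|R_jg|^2$ by genuine harmonic analysis rather than by a soft comparison. The natural tool is the potential-theoretic characterization (iv) of Theorem \ref{CHAR}, which recasts $\nu_g\in\mathfrak M^{1,\,2}_{+}$ as the bound $\int_K ({\rm\bf I}_1*\nu_g)^2\,dx\le A_4^{2}\,{\rm cap}_{1,\,2}(K)$ on the Riesz potential of the weight; combined with a weighted $L^2$ inequality for Calder\'on--Zygmund operators (with the weight controlled by ${\rm\bf I}_1*\nu_g$ in an $A_2$-type sense) and the capacitary strong-type inequality of Maz'ya--Verbitsky, this yields the desired bound and hence the theorem. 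This is exactly the delicate analysis carried out in \cite{MV}, and it is the only place where more than soft arguments are required.
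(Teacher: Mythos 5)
Your proposal is correct in outline and its first half coincides with what the paper actually does: the reduction to $\alpha=0$ by commuting the two Fourier multipliers on $\mathcal{S}'/\mathcal{P}$ is exactly the reduction used in the paper's proof of the more general Theorem \ref{singular}, and for Theorem \ref{riesz} itself the paper offers no argument at all beyond citing \cite{MV}. Where you diverge is in how the case $\alpha=0$ is closed. You propose to dualize $\int_K|R_jg|^2\,dx$ and then transport the capacitary condition through the pairing using characterization (iv) of Theorem \ref{CHAR} and an ``$A_2$-type'' weighted bound; as written this is the weakest link, because after dualizing you must split the capacity information between $g$ and $R_j\phi$, and you do not say how the weight built from ${\rm\bf I}_1*\nu_g$ acquires a Muckenhoupt property with uniform constant. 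The mechanism the paper actually relies on (visible in its proof of Theorem \ref{singular}) is cleaner and avoids duality entirely: the symbol $i\xi_j/|\xi|$ satisfies Mikhlin's condition, hence by Kurtz--Wheeden \cite{KW} one has $\int|R_jg|^2w\,dx\leq C\int|g|^2w\,dx$ for every $A_1$ weight $w$ with a constant depending only on the $A_1$ bound of $w$, and then \cite[Lemma 3.1]{MV} upgrades uniform $A_1$-weighted boundedness to boundedness on the capacitary space $\mathcal{V}^{1,\,2}$ (morally: for each compact $K$ one tests against an $A_1$ majorant of $\chi_K$, e.g.\ a small power of ${\rm\bf M}\chi_K$). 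Both routes ultimately rest on \cite{MV}, so your proof is acceptable as a sketch, but if you want a self-contained argument you should replace the dualization step by the $A_1$-weight transference, which is the step that actually does the work.
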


\begin{corollary}\label{pro} Let $\mathbb{P}= Id-\nabla \Delta^{-1} \nabla \cdot$ be the 
Helmholtz-Leray projection onto the divergence-free vector fields. Then one has the following bound:
$$\norm{\mathbb{P} f}_{\mathcal{V}^{1,\, 2}_{\alpha}}\leq C \norm{f}_{\mathcal{V}^{1,\, 2}_{\alpha}}$$
for all $\alpha\in \RR$. 
\end{corollary}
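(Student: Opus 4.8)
The plan is to reduce the boundedness of the Helmholtz-Leray projection $\mathbb{P}$ to the boundedness of the Riesz transforms $R_j$ established in Theorem \ref{riesz}, together with the bound for the divergence-free vector fields already implicit in Corollary \ref{pro}'s hypotheses. Recall that $\mathbb{P}$ acts componentwise on a vector field $f=(f_1,\dots,f_n)$ by
$$(\mathbb{P} f)_i = f_i + \sum_{j=1}^n R_i R_j f_j,$$
where the second term is exactly the explicit symbol form of $-\partial_i \Delta^{-1} \nabla\cdot f = \partial_i(-\Delta)^{-1/2}\,(-\Delta)^{-1/2}\partial_j f_j$, i.e. a composition $R_i R_j$ of two Riesz transforms. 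This identity follows from the Fourier multiplier $\widehat{(\mathbb{P} f)}_i(\xi) = \big(\delta_{ij} - \xi_i\xi_j/|\xi|^2\big)\widehat{f_j}(\xi)$, and one must check it is valid on $\mathcal{S}'/\mathcal{P}$, where each $R_j$ is already well-defined.

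First I would verify that each component $(\mathbb{P} f)_i$ is a finite sum of terms, each of which is either $f_i$ itself or a two-fold composition $R_i R_j f_j$. Since Theorem \ref{riesz} gives $\norm{R_j g}_{\mathcal{V}^{1,\,2}_\alpha} \le C\norm{g}_{\mathcal{V}^{1,\,2}_\alpha}$ for every $j$ and every $\alpha\in\RR$, iterating the bound twice yields
$$\norm{R_i R_j f_j}_{\mathcal{V}^{1,\,2}_\alpha} \le C^2 \norm{f_j}_{\mathcal{V}^{1,\,2}_\alpha}.$$
Summing over the at most $n+1$ terms in each component and then over the $n$ components, and using the convention $\norm{f}_{\mathcal{V}^{1,\,2}_\alpha}=\norm{|f|}_{\mathcal{V}^{1,\,2}_\alpha}$ for vector fields together with the comparability of $|f|$ with $\big(\sum_j |f_j|^2\big)^{1/2}$ or $\sum_j|f_j|$, I would obtain a constant $C=C(n,\alpha)$ (or even independent of $\alpha$, matching Theorem \ref{riesz}) such that $\norm{\mathbb{P} f}_{\mathcal{V}^{1,\,2}_\alpha}\le C\norm{f}_{\mathcal{V}^{1,\,2}_\alpha}$. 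This is essentially a bookkeeping argument once the multiplier identity and the scalar theorem are in hand.

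The one genuinely delicate point, and the step I expect to require the most care, is the passage between the vector norm $\norm{f}_{\mathcal{V}^{1,\,2}_\alpha}$ defined via $|f|$ and the control of the individual scalar components $f_j$. The space $\mathcal{V}^{1,\,2}$ is defined through a capacitary/quadratic inequality on $|f|$, so one needs that the scalar estimate of Theorem \ref{riesz} applies to each $f_j$ and that the resulting componentwise bounds recombine into a bound on $|\mathbb{P} f|$. The inequalities $|f_j|\le |f|$ and $|\mathbb{P} f| \le \sum_i |(\mathbb{P} f)_i|$ make this transparent at the level of pointwise comparisons, and since $\mathcal{V}^{1,\,2}$ is a lattice-type space (monotone under $|g|\le |h|$ pointwise), these comparisons transfer directly to the norms. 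I would make this monotonicity explicit, noting that it follows immediately from the definition $\norm{g}_{\mathcal{V}^{1,\,2}}=\sup_K \big[\int_K |g|^2\,dx/{\rm cap}_{1,\,2}(K)\big]^{1/2}$, and then the corollary follows.
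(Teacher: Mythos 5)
Your proof is correct and follows the route the paper intends: the paper states this as an immediate corollary of Theorem \ref{riesz}, relying on exactly the componentwise identity $(\mathbb{P}f)_i = f_i + \sum_j R_iR_jf_j$ and two applications of the scalar Riesz transform bound. Your only delicate point, the passage between the vector norm and the components, is handled correctly provided the pointwise lattice comparison is applied in $\mathcal{V}^{1,\,2}$ to $(-\Delta)^{\alpha/2}f$ rather than to $f$ itself (since $\mathcal{V}^{1,\,2}_{\alpha}$ for $\alpha\neq 0$ is not itself a lattice), which is what your argument effectively does.
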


More generally, we have the following mapping property of singular integrals on the space $\mathcal{V}^{1,\, 2}_{\alpha}$.

\begin{theorem}\label{singular} Let $\sigma, s\in \RR$ and let $P(\xi)$ be a $C^{n}$-function on $\RR^n\setminus\{0\}$ that satisfies
\begin{equation}\label{HP}
\left|\frac{\partial^{|\alpha|} P}{\partial \xi^\alpha}(\xi)\right| \leq C |\xi|^{\sigma -|\alpha|}
\end{equation}
for all multi-indices $\alpha\in \NN^n$ with $|\alpha|\leq n$ and all $\xi\in \RR^n\setminus\{0\}$. Then the Fourier multiplier
 operator $P(D)$ 
is bounded from $\mathcal{V}^{1,\, 2}_{s}$ to $\mathcal{V}^{1,\, 2}_{s-\sigma}$.
\end{theorem}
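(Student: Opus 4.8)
The plan is to reduce the general statement to the boundedness of a Calder\'on--Zygmund operator on $\mathcal{V}^{1,\,2}=\mathcal{V}^{1,\,2}_0$, and then to exploit the fact that $\mathcal{V}^{1,\,2}$ is precisely the space of pointwise multipliers from the homogeneous Dirichlet space $\{\varphi:\nabla\varphi\in L^2\}$ into $L^2$.

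First I would remove the orders $s$ and $\sigma$. By the definition \eqref{spacedef}, for $f\in\mathcal{V}^{1,\,2}_s$ one has $\norm{P(D)f}_{\mathcal{V}^{1,\,2}_{s-\sigma}}=\norm{(-\Delta)^{(s-\sigma)/2}P(D)f}_{\mathcal{V}^{1,\,2}}$. Setting $g=(-\Delta)^{s/2}f$, so that $\norm{f}_{\mathcal{V}^{1,\,2}_s}=\norm{g}_{\mathcal{V}^{1,\,2}}$, and composing Fourier multipliers gives $(-\Delta)^{(s-\sigma)/2}P(D)f=Q(D)g$ with symbol $Q(\xi)=|\xi|^{-\sigma}P(\xi)$. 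Since $|\xi|^{-\sigma}$ is smooth away from the origin and homogeneous of degree $-\sigma$, the Leibniz rule applied to \eqref{HP} yields $|\partial^{\alpha}_\xi Q(\xi)|\les|\xi|^{-|\alpha|}$ for all $|\alpha|\leq n$; that is, $Q$ satisfies \eqref{HP} with $\sigma=0$. Hence it suffices to prove that every such order-zero multiplier $Q(D)$ --- a classical Calder\'on--Zygmund operator, bounded on $L^2$ and with kernel $K$ obeying $|K(z)|\les|z|^{-n}$ --- maps $\mathcal{V}^{1,\,2}$ boundedly into itself.

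For the core estimate I would use that, by the very definition of $\mathcal{V}^{1,\,2}$ together with Theorem \ref{CHAR}, a function $w\in\mathcal{V}^{1,\,2}$ is exactly a pointwise multiplier from $\{\varphi:\nabla\varphi\in L^2\}$ into $L^2$, with
\[
\norm{w}_{\mathcal{V}^{1,\,2}}\simeq\sup\Big\{\norm{\varphi\,w}_{L^2(\RR^n)}:\ \norm{\nabla\varphi}_{L^2(\RR^n)}\leq1\Big\}.
\]
Applying this to $w=Q(D)g$ and writing, for a test function $\varphi$,
\[
\varphi\,Q(D)g=Q(D)(\varphi g)-[Q(D),\varphi]g,
\]
the first term is controlled by the $L^2$-boundedness of $Q(D)$ and the multiplier property of $g$, namely $\norm{Q(D)(\varphi g)}_{L^2}\les\norm{\varphi g}_{L^2}\les\norm{g}_{\mathcal{V}^{1,\,2}}\norm{\nabla\varphi}_{L^2}$.

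It remains to estimate the commutator, and this is the main obstacle. Its kernel is $K(x-y)\big(\varphi(y)-\varphi(x)\big)$, and using $|\varphi(x)-\varphi(y)|\leq|x-y|\int_0^1|\nabla\varphi(x+t(y-x))|\,dt$ together with $|K(z)|\les|z|^{-n}$ bounds $|[Q(D),\varphi]g(x)|$ by a first-order Riesz potential of $|\nabla\varphi|\,|g|$, once the intermediate-point values of $\nabla\varphi$ are absorbed into its Hardy--Littlewood maximal function $M\nabla\varphi$, whose $L^2$ norm is comparable to $\norm{\nabla\varphi}_{L^2}$. The desired bound $\norm{[Q(D),\varphi]g}_{L^2}\les\norm{g}_{\mathcal{V}^{1,\,2}}\norm{\nabla\varphi}_{L^2}$ then follows by duality from property {\rm (ii)} of Theorem \ref{CHAR}: testing ${\rm\bf I}_{1}*(|\nabla\varphi||g|)$ against $\psi\in L^2$ and moving ${\rm\bf I}_{1}$ onto $\psi$ leaves $\int|\nabla\varphi|\,|g|\,({\rm\bf I}_{1}*\psi)\,dx$, which by the Cauchy--Schwarz inequality and the trace bound $\int({\rm\bf I}_{1}*\psi)^2|g|^2\,dx\les\norm{g}^2_{\mathcal{V}^{1,\,2}}\norm{\psi}^2_{L^2}$ (property {\rm (ii)} with $\nu=|g|^2dx$) is at most $\norm{\nabla\varphi}_{L^2}\norm{g}_{\mathcal{V}^{1,\,2}}\norm{\psi}_{L^2}$. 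Combining the two terms yields $\norm{Q(D)g}_{\mathcal{V}^{1,\,2}}\les\norm{g}_{\mathcal{V}^{1,\,2}}$, which after undoing the reduction of the first step is exactly the assertion of the theorem. One checks along the way that these manipulations are legitimate modulo polynomials, as in \eqref{spacedef}, so that the argument is consistent with the quotient $\mathcal{S}'/\mathcal{P}$.
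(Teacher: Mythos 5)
Your reduction to the order-zero multiplier $Q(\xi)=|\xi|^{-\sigma}P(\xi)$ is exactly the paper's first step, and your duality argument via property (ii) of Theorem \ref{CHAR} (moving ${\rm\bf I}_1$ onto the dual function $\psi$ and invoking the trace inequality for $\nu=|g|^2dx$) is correct as far as it goes. The gap is in the commutator estimate, which is the heart of the matter: note that since $\norm{Q(D)(\varphi g)}_{L^2}\les\norm{\varphi g}_{L^2}$ is immediate, the bound $\norm{[Q(D),\varphi]g}_{L^2}\les\norm{g}_{\mathcal{V}^{1,\,2}}\norm{\nabla\varphi}_{L^2}$ is \emph{equivalent} to the theorem for $\sigma=0$, so nothing is proved until that estimate is genuinely established. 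Your claimed pointwise domination of $[Q(D),\varphi]g$ by ${\rm\bf I}_1*\bigl(|\nabla\varphi|\,|g|\bigr)$ (or by ${\rm\bf I}_1*({\rm\bf M}(\nabla\varphi)|g|)$) does not hold. The mean value theorem produces $\nabla\varphi$ at intermediate points of the segment $[x,y]$, and the only available pointwise absorption is the Haj\l asz-type inequality $|\varphi(x)-\varphi(y)|\les |x-y|\bigl({\rm\bf M}\nabla\varphi(x)+{\rm\bf M}\nabla\varphi(y)\bigr)$; the one-sided version with only ${\rm\bf M}\nabla\varphi(y)$ is false. The unavoidable extra term is ${\rm\bf M}\nabla\varphi(x)\cdot{\rm\bf I}_1*|g|(x)$, and ${\rm\bf I}_1*|g|\equiv+\infty$ for typical $g\in\mathcal{V}^{1,\,2}$ (e.g.\ $g(x)=|x|^{-1}$, for which $\int_{|y|>1}|y|^{-1}|x-y|^{1-n}\,dy$ diverges). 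Nor can you keep the segment average and convert it to a convolution: substituting $z=x+t(y-x)$ produces the divergent factor $\int_0^1 t^{-1}\,dt$. So the step "absorbed into its Hardy--Littlewood maximal function" conceals exactly the difficulty the theorem is about, and the argument does not close.

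For comparison, the paper avoids the kernel/commutator route entirely: it observes that $m(\xi)=|\xi|^{-\sigma}P(\xi)$ satisfies the Mikhlin condition, invokes the Kurtz--Wheeden weighted norm inequality to get $L^p(w)$-boundedness of $m(D)$ uniformly over $A_1$ (indeed $A_p$) weights $w$, and then applies \cite[Lemma 3.1]{MV}, which transfers such uniform weighted bounds into boundedness on the capacitary space $\mathcal{V}^{1,\,2}$. If you want to salvage a commutator-style proof, you would need a genuinely scale-invariant treatment of $[Q(D),\varphi]$ for $\varphi\in\dot H^1$ (for instance via a paraproduct decomposition of $\varphi\,Q(D)g-Q(D)(\varphi g)$), not a crude pointwise kernel bound; as written, the proof has a genuine gap at its central step.
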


\begin{proof} For given $f\in \mathcal{V}^{1,\, 2}_{s}$, let $g=(-\Delta)^{s/2}f \in \mathcal{V}^{1,\, 2}$. We need to show that 
\begin{equation}\label{DPg}
\norm{(-\Delta)^{-\sigma/2} P(D) g}_{\mathcal{V}^{1,\, 2}}\leq C \norm{g}_{\mathcal{V}^{1,\, 2}}. 
\end{equation}
The symbol of the operator  $(-\Delta)^{-\sigma/2} P(D)$ is given by $m(\xi)=|\xi|^{-\sigma} P(\xi)$ for $\xi\in \RR^n\setminus\{0\}$.
Thus by \eqref{HP} we see that $m(\xi)$ satisfies the following Mikhlin's condition
\begin{equation*}
\left|\frac{\partial^{|\alpha|} m}{\partial \xi^\alpha}(\xi)\right| \leq C |\xi|^{-|\alpha|}.
\end{equation*}
for all multi-indices $\alpha\in \NN^n$ with $|\alpha|\leq n$ and all $\xi\in \RR^n\setminus\{0\}$.
Then it follows from \cite[Theorem 2]{KW} that the following weighted estimate 
\begin{equation}\label{KW}
\int_{\RR^n}|(-\Delta)^{-\sigma/2} P(D)g (x)|^p w(x)dx  \leq C \int_{\RR^n}|g (x)|^p w(x)dx
\end{equation}
holds for all $1<p<\infty$ provided the weight $w$ belongs the Muckenhoupt class $A_p$. In particular, \eqref{KW} 
holds if $w$ belongs the the class
$A_1$, i.e., if $w$ satisfies the pointwise bound
\begin{equation}\label{A1cond}
{\rm\bf M} w(x) \leq A w(x)
\end{equation}  
for a.e. $x\in \RR^n$ and for a fixed constant $A\geq 1$. In \eqref{A1cond}, {\rm \bf M} stands for  
the Hardy-Littlewood maximal function defined for each $f\in L^{1}_{\rm loc}(\RR^n)$ by
$${\rm\bf M}f(x)= \sup_{r>0} \frac{1}{|B_r(x)|}\int_{B_r(x)} |f(y)| dy.$$

Finally, applying Lemma 3.1 in \cite{MV} we obtain  the bound \eqref{DPg}.
\end{proof}

In regard to  the the Hardy-Littlewood maximal function {\rm\bf M},
 we have  the following useful boundedness result which is also  a consequence of \cite[Lemma 3.1]{MV}.
\begin{theorem}\label{IV} Let $1<p<\infty$ and $n\geq 3$. Then
$$\sup \frac{\int_{K} |{\rm\bf M}f|^p dx}{{\rm cap}_{1,\,2}(K)}\les \sup \frac{\int_{K} |f|^p dx}{{\rm cap}_{1,\,2}(K)}, $$
where the suprema are taken over all compact sets $K\subset\RR^n$ with positive ${\rm cap}_{1,\,2}(K)$.
\end{theorem}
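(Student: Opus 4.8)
The plan is to follow verbatim the scheme already used in the proof of Theorem~\ref{singular}: first establish a weighted $L^p$ inequality for the Hardy--Littlewood maximal operator ${\rm\bf M}$ against every Muckenhoupt $A_1$ weight, with a bound depending only on the $A_1$ characteristic, and then pass from the weighted inequality to the capacitary estimate by invoking \cite[Lemma~3.1]{MV}. The only genuinely new input compared with Theorem~\ref{singular} is the weighted boundedness of ${\rm\bf M}$, which is classical; the transference step is identical in spirit, so the heavy lifting sits entirely inside the cited lemma.

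More precisely, I would recall the Muckenhoupt weighted maximal theorem: for every $1<p<\infty$ and every weight $w$ in the Muckenhoupt class $A_p$ one has
\begin{equation*}
\int_{\RR^n} |{\rm\bf M}f(x)|^p\, w(x)\, dx \le C \int_{\RR^n} |f(x)|^p\, w(x)\, dx,
\end{equation*}
where $C$ depends only on $n$, $p$ and the $A_p$ characteristic of $w$. Since $A_1\subset A_p$ for every $1<p<\infty$, this inequality holds in particular for all $w$ satisfying the $A_1$ condition \eqref{A1cond} with a fixed constant $A\ge 1$, and in that case $C$ depends only on $n$, $p$ and $A$. This is exactly the hypothesis consumed by the transference mechanism, now with ${\rm\bf M}$ playing the role that $(-\Delta)^{-\sigma/2}P(D)$ played in \eqref{KW}.

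With this uniform $A_1$ bound in hand, I would conclude by applying \cite[Lemma~3.1]{MV} at the exponent $p$: that lemma converts a weighted $L^p(w\,dx)$ estimate valid for all $A_1$ weights (with constant controlled by the $A_1$ characteristic) into the capacitary--Morrey inequality
\begin{equation*}
\sup_K \frac{\int_{K}|{\rm\bf M}f|^p\,dx}{{\rm cap}_{1,\,2}(K)} \les \sup_K \frac{\int_{K}|f|^p\,dx}{{\rm cap}_{1,\,2}(K)},
\end{equation*}
the suprema being over compact $K\subset\RR^n$ with ${\rm cap}_{1,\,2}(K)>0$, which is precisely the assertion. The mechanism of the lemma is to manufacture, for each compact $K$, an $A_1$ weight adapted to the equilibrium (capacitary) measure of $K$ whose mass is comparable to ${\rm cap}_{1,\,2}(K)$ and against which testing the weighted inequality reproduces both capacitary quotients; this is where the potential-theoretic content of Theorem~\ref{CHAR} and the equivalence \eqref{MVequi} enter.

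The point that needs care --- and the main obstacle --- is verifying that \cite[Lemma~3.1]{MV} applies here exactly as stated, since in the excerpt it is invoked (in the proof of Theorem~\ref{singular}) for a \emph{linear} multiplier operator and at the exponent $p=2$ tied to the $\mathcal{V}^{1,\,2}$ norm, whereas here ${\rm\bf M}$ is only \emph{sublinear} and $p$ is arbitrary in $(1,\infty)$. Sublinearity is harmless, because the transference uses only the one-sided weighted inequality for the pair $(|{\rm\bf M}f|,\,|f|)$ and never the linearity of the operator, so the abstract weighted-to-capacitary implication is unaffected. The restriction $1<p<\infty$ in the statement is forced by, and matches exactly, the range in which ${\rm\bf M}$ admits weighted $A_p$ bounds, so no endpoint difficulty arises.
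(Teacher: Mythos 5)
Your proposal matches the paper's own (implicit) argument exactly: the paper states Theorem \ref{IV} as "a consequence of \cite[Lemma 3.1]{MV}", i.e., precisely the combination of the classical weighted $A_p$ (hence $A_1$) bound for ${\rm\bf M}$ with the weighted-to-capacitary transference of that lemma. Your additional remarks on sublinearity and the range of $p$ are correct and do not change the route.
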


\section{Stationary Navier-Stokes Equations}\label{sec3}

The goal of this section is to prove Theorem \ref{Stationary-Sol}. We first recall the following standard fixed point 
lemma which is useful in solving Navier-Stokes equations with small data (see, e.g., \cite{Me}).
\begin{lemma} \label{fixedpoint}
Let $X$ be a Banach space with norm $\norm{\cdot}_{X}$ and let $B: X\times X\rightarrow X$ be a bi-linear map such that 
$$\norm{B(x,y)}_{X}\leq \alpha \norm{x}_{X}\norm{y}_{X}$$
for all $x, y\in X$ and some $\alpha>0$. Then for each $y_0\in X$ with $4\alpha \norm{y_0}_{X}<1$ the equation 
$$x=B(x,x)+ y_0$$
has a  solution $x\in X$, and moreover this is the only solution for which 
$$\norm{x}_{X}\leq 2\norm{y_0}_{X}.$$
\end{lemma}

\begin{proof}[Proof of Theorem \ref{Stationary-Sol}]
We will  apply Lemma \ref{fixedpoint} to our context by choosing $X=\mathcal{V}^{1,\, 2}(\RR^n)$ and letting
$$U_0=-\Delta^{-1}\mathbb{P} F,\qquad B(U,V)=\Delta^{-1}\mathbb{P} \nabla\cdot (U\otimes V),$$
where $$F=(F_1, \dots, F_n)\in \mathcal{V}^{1,\, 2}_{-2},$$ 
and $$U=(U_1, \dots, U_n), V=(V_1, \dots, V_n) \in \mathcal{V}^{1,\, 2}.$$

Then by \eqref{MVequi} and  H\"older's inequality we have 
$$\norm{(U\otimes V)}_{\mathcal{V}^{1,\, 2}_{-1}} \leq C 
\norm{(U\otimes V)}_{\mathfrak{M}_{+}^{1,\, 2}}\leq C \norm{U}_{\mathcal{V}^{1,\, 2}}\norm{V}_{\mathcal{V}^{1,\, 2}}.$$

Thus it follows from Corollary \ref{pro} that 
$$B:\mathcal{V}^{1,\, 2}\times \mathcal{V}^{1,\, 2}\rightarrow \mathcal{V}^{1,\, 2}$$
with
$$\norm{B(U, V)}_{\mathcal{V}^{1,\, 2}}\leq C_1 \norm{U}_{\mathcal{V}^{1,\, 2}}\norm{V}_{\mathcal{V}^{1,\, 2}}.$$

On the other hand, by  Corollary \ref{pro} we have 
$$\norm{U_0}_{\mathcal{V}^{1,\, 2}}\leq C_2\delta_0$$

Finally choosing small $\delta_0>0$ so that $4\delta_0 C_1 C_2 <1 $ and
 applying Lemma \ref{fixedpoint} we obtain a unique solution to \eqref{NV}. 
\end{proof}

\begin{remark}\label{V12opt} Here we further discuss the question of optimality of $\mathcal{V}^{1,\, 2}$. For $u\in L_{\rm loc}^{2}(\RR^n)$ define inductively 
$w_0= |u|$ and $w_{n+1}= {\rm\bf I}_{1}(w_n^2)$ for $n\geq 0$. Let $X$ be the space  
$$X=\{u\in L_{\rm loc}^{2}(\RR^n): \norm{u}_{X}<+\infty\},\quad \text{with} \ \norm{u}_{X} =\sup_{n\geq 0}\norm{w_n}_{\mathcal{M}^{2,\, 2}}^{\frac{1}{2^n}}.$$

Then it is easy to see that the bi-linear map $(u, v) \mapsto {\rm\bf I}_{1}*(uv)$ is
bounded on $X\times X$ and that $X$ satisfies the property
$\norm{v}_{X}\leq \norm{u}_{X}$ whenever $u\in X$ and $|v|\leq u$ a.e.

Thus the  equation \eqref{NV} can be solved in $X$ for any external force $F$ with small $\norm{(-\Delta)^{-1}F}_{X}$. 
Moreover, any space $Y$ such that $Y\subset \mathcal{M}^{2,\,2}$ and 
$(u, v) \mapsto {\rm\bf I}_{1}*(uv)$ is bounded on $Y\times Y$ satisfies
$Y\subset X$. In particular, this gives $\mathcal{V}^{1,\, 2}\subset X$. As a matter of  fact, we have 
\begin{equation}\label{IdenX}
X =\mathcal{V}^{1,\, 2}.
\end{equation}

The identification \eqref{IdenX} has been known, even in a much more general setting, 
in the beautiful work \cite{KV} \textup{(}see Theorems 2.10, 5.6, and 5.7 in \cite{KV}\textup{)}. 
In our context,  the
inclusion $X \subset \mathcal{V}^{1,\, 2}$ can be shown as follows. Given $f\in X$, 
by Lemma \ref{fixedpoint}  the equation
$$u= {\rm\bf I}_{1}*(u^2) +\epsilon\, |f| \qquad a.e.$$
has a non-negative solution $u\in X$ for some $\epsilon>0$. Next, by  Lemma 4.1 in \cite{VW} for any non-negative $g\in L^2$ there holds
$$({\rm\bf I}_{1}*g)^2 \leq c(n)\,  {\rm\bf I}_{1}*(g \, {\rm\bf I}_{1}*g),$$
and thus
\begin{eqnarray*}
\int_{\RR^n}({\rm\bf I}_{1}*g)^2 u^2 dx &\leq& c \int_{\RR^n} [{\rm\bf I}_{1}*(g \, {\rm\bf I}_{1}*g)] u^2 dx \\
&=& c \int_{\RR^n} (g \, {\rm\bf I}_{1}*g) [{\rm\bf I}_{1}*(u^2)] dx\\
&\leq & c \norm{g}_{L^2} \left(\int_{\RR^n} ({\rm\bf I}_{1}*g)^2 [{\rm\bf I}_{1}*(u^2)]^2 dx\right)^{\frac{1}{2}}\\
&\leq & c \norm{g}_{L^2} \left(\int_{\RR^n} ({\rm\bf I}_{1}*g)^2 u^2 dx\right)^{\frac{1}{2}},
\end{eqnarray*}
where we used that $u^2\geq [{\rm\bf I}_{1}*(u^2)]^2$ in the last inequality.
This yields 
$$\int_{\RR^n}({\rm\bf I}_{1}*g)^2 u^2 dx \leq C \int_{\RR^n} g^2 dx,$$
and as $u^2\geq \epsilon^2 |f|^2$ we find
$$\int_{\RR^n}({\rm\bf I}_{1}*g)^2 |f|^2 dx \leq C \epsilon^{-2} \int_{\RR^n} g^2 dx,$$ 
which holds for all non-negative $g\in L^2$.
Thus by Theorem \ref{CHAR} we see that $f\in \mathcal{V}^{1,\, 2}$, i.e., the inclusion $X \subset \mathcal{V}^{1,\, 2}$ holds true. 

\end{remark}

\section{Resolvent Estimates and Analytic  Semigroups} \label{sec4}

In this section we consider the non-stationary Navier-Stokes equations \eqref{NSE}
where $F$ is as in Theorem \ref{Stationary-Sol}  and $u_0$ is assumed to be in $\mathcal{V}^{1,\, 2}$ with
zero divergence. 

Recall that in Theorem \ref{stability-th} we want  to show that if $\norm{u_0 -U}_{\mathcal{V}^{1,\, 2}}$ is sufficiently small, 
then there exists a unique time-global solution $u$ of \eqref{NSE}. Moreover, 
the difference $u -U$ will converge to zero in a suitable space 
as time $t\rightarrow \infty$.   Here, $U$ is the stationary solution 
of \eqref{NV} whose existence is guaranteed by Theorem \ref{Stationary-Sol}. Let us define
\begin{equation}\label{KZ.def}
\begin{split}
\mathcal{B}[f](x) & = \mathbb{P} \nabla \cdot [U(\cdot) \otimes f(\cdot) + f(\cdot) \otimes U(\cdot)](x), \\
\mathcal{A}[f](x) & = -\Delta f(x) + \mathcal{B}[f](x).
\end{split}
\end{equation}
Then, for $w = u-U $ and $w^0 =  u_0-U$, the system \eqref{NSE} can be written as
\begin{equation}\label{KZ}
\left \{ \begin{aligned}
& \frac{\partial w}{\partial t}(\cdot,t) + \mathcal{A}[w(\cdot, t] + \mathbb{P}\nabla \cdot [w(\cdot,t) \otimes w(\cdot,t)] =0\\
& w(0, \cdot) = w^0(\cdot).
\end{aligned} \right.
\end{equation}
Therefore, using Duhamel's principle, one has the integral form of \eqref{KZ} 
\begin{equation} \label{inte.form} w(\cdot,t) = e^{-\mathcal{A} t}w^0 - 
\int_0^t e^{-\mathcal{A}(t-s)}\mathbb{P}\nabla\cdot[w(\cdot, s)\otimes w(\cdot, s)]ds. 
\end{equation}

We shall show in the next section that \eqref{KZ} and \eqref{inte.form} are equivalent and use \eqref{inte.form} to 
prove the existence and uniqueness of solution $w$ which converges to zero in some suitable space. 
To this end, we need to make sense of the semigroup $e^{-\mathcal{A}t}$ and characterize its properties. 
That will be the main objective of this section.

We first recall a pointwise estimate of Riesz potentials due to D. R. Adams \cite{Ad} that will be needed shortly.

\begin{lemma}\label{AD} Let $0<\alpha<\beta \leq n/p$, $p\in (1, \infty)$. Then for any $f\in \mathcal{M}^{p, \, \beta p}(\RR^n)$ we have
$${\rm \bf I}_{\alpha}*f(x) \les \norm{f}_{\mathcal{M}^{p, \, \beta p}}^{\alpha/\beta} ({\rm \bf M}f(x))^{(\beta-\alpha)/\beta}.$$
\end{lemma}

The following Sobolev type embedding theorem will be essential to our development later. 
The idea behind its proof is due to Igor E. Verbitsky (see \cite{MS1, MS2}).

\begin{theorem}\label{Sobcap} Let $1<p<\infty$ and suppose that $f$ is a function that satisfies
$$\sup \frac{\int_{K}|f|^p dx}{{\rm cap}_{1,\, 2}(K)}<+\infty.$$
Then for any $0<\alpha<2/p$ we have 
$$\sup \left[\frac{\int_{K}|{\rm\bf I}_{\alpha}*f|^{\frac{2p}{2-\alpha p}}}{{\rm cap}_{1,\, 2}(K)} \right]^{\frac{2-\alpha p}{2p}}\les \sup \left[\frac{\int_{K}|f|^p dx}{{\rm cap}_{1,\, 2}(K)}\right]^{\frac{1}{p}}. $$
\end{theorem}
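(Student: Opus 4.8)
The plan is to reduce the capacitary Morrey hypothesis to an honest Morrey condition, use Adams' pointwise bound from Lemma \ref{AD} to trade the Riesz potential ${\rm\bf I}_{\alpha}*f$ for a power of the Hardy--Littlewood maximal function ${\rm\bf M}f$, and then close the estimate with the capacitary boundedness of ${\rm\bf M}$ supplied by Theorem \ref{IV}. The whole argument is driven by choosing the auxiliary parameters so that all exponents match up.

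First I would set $\beta = 2/p$ and $q = \frac{2p}{2-\alpha p}$, so that the target exponent $\frac{2-\alpha p}{2p}$ appearing in the statement is exactly $1/q$; since $0<\alpha<2/p$ we have $q\in(p,\infty)$, and the hypothesis $0<\alpha<\beta\leq n/p$ of Lemma \ref{AD} holds because $\beta=2/p\leq n/p$ for $n\geq 2$. Writing $N^p = \sup_K \int_K |f|^p\,dx/{\rm cap}_{1,\,2}(K)$ for the right-hand quantity, I would restrict the supremum to balls and use ${\rm cap}_{1,\,2}(B_r)\simeq r^{\,n-2}$ to get $\sup_{B_r} r^{2-n}\int_{B_r}|f|^p\,dx \les N^p$, that is $\norm{f}_{\mathcal{M}^{p,\,2}}\les N$, where $\mathcal{M}^{p,\,2}=\mathcal{M}^{p,\,\beta p}$ since $\beta p = 2$. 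This is the only place where the special value $\beta p = 2$ (hence the specific target exponent) enters.

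Next I would apply Lemma \ref{AD} pointwise, ${\rm\bf I}_{\alpha}*f(x)\les \norm{f}_{\mathcal{M}^{p,\,2}}^{\alpha/\beta}\,({\rm\bf M}f(x))^{(\beta-\alpha)/\beta}$, and raise to the power $q$. The key algebraic identity is $q(\beta-\alpha)/\beta = p$, so the maximal-function factor is precisely $({\rm\bf M}f)^p$; integrating over an arbitrary compact $K$ and invoking Theorem \ref{IV} gives $\int_K ({\rm\bf M}f)^p\,dx \les {\rm cap}_{1,\,2}(K)\,N^p$. Combining this with $\norm{f}_{\mathcal{M}^{p,\,2}}^{q\alpha/\beta}\les N^{q\alpha/\beta}$ and the companion identity $q\alpha/\beta + p = q$, I obtain $\int_K |{\rm\bf I}_{\alpha}*f|^q\,dx \les {\rm cap}_{1,\,2}(K)\,N^q$. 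Dividing by ${\rm cap}_{1,\,2}(K)$, taking the supremum over $K$, and raising to the power $1/q=\frac{2-\alpha p}{2p}$ then yields exactly the asserted inequality.

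Once Lemma \ref{AD} and Theorem \ref{IV} are granted, the proof is essentially bookkeeping of exponents, and I expect no genuine obstacle; the only content is the verification of the two identities $q(\beta-\alpha)/\beta = p$ and $q\alpha/\beta + p = q$, which are precisely what pin down the exponent $\frac{2p}{2-\alpha p}$. The one step warranting care is the passage from the capacitary Morrey condition to the classical Morrey norm $\norm{f}_{\mathcal{M}^{p,\,2}}$ required to apply Lemma \ref{AD}, which rests on the capacity estimate ${\rm cap}_{1,\,2}(B_r)\simeq r^{\,n-2}$.
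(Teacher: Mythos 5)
Your proposal is correct and follows essentially the same route as the paper: apply Adams' pointwise estimate (Lemma \ref{AD}) with $\beta=2/p$, pass from the capacitary condition to the Morrey norm via ${\rm cap}_{1,\,2}(B_r)\simeq r^{n-2}$, and close with the capacitary boundedness of the maximal function (Theorem \ref{IV}); the exponent identities you verify are exactly the paper's computation $A^{\frac{\alpha p^2}{2-\alpha p}}A^{p}=A^{\frac{2p}{2-\alpha p}}$. The only difference is that you make explicit the reduction to the Morrey norm, which the paper leaves implicit.
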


\begin{proof} For simplicity we set 
$$A=\sup \left[\frac{\int_{K}|f|^p dx}{{\rm cap}_{1,\, 2}(K)}\right]^{\frac{1}{p}}.$$
 
Then by applying Lemma \ref{AD} with $\beta=2/p$ we find 
\begin{equation*}
{\rm \bf I}_{\alpha}*f(x) \les \norm{f}_{\mathcal{M}^{p, \, \beta p}}^{\alpha p/2} ({\rm \bf M}f(x))^{1-\alpha p/2} 
\les  A^{\alpha p/2} ({\rm \bf M}f(x))^{1-\alpha p/2}.
\end{equation*} 

Thus it follows from Theorem \ref{IV} that 
\begin{eqnarray*}
\sup \frac{\int_{K}|{\rm\bf I}_{\alpha}*f|^{\frac{2p}{2-\alpha p}}}{{\rm cap}_{1,\, 2}(K)}&\les& A^{\frac{\alpha p^2}{2-\alpha p}} \sup \frac{\int_{K}({\rm \bf M}f)^{p} dx}{{\rm cap}_{1,\, 2}(K)}\\
&\les& A^{\frac{\alpha p^2}{2-\alpha p}} A^{p}=A^{\frac{2 p}{2-\alpha p}},
\end{eqnarray*}
which yields the desired result.
\end{proof}

\begin{lemma}\label{Adomain} For any $s\in (0,1)$, there exists a constant $C =C(s) >0$ such that 
the operators $\mathcal{A}$ and  $\mathcal{B}$ are bounded from $\mathcal{V}^{1,\, 2}_{s}$ 
to $\mathcal{V}^{1,\, 2}_{s-2}$ with bounds 
$$\norm{\mathcal{B}}_{\mathcal{V}^{1,\, 2}_{s}\rightarrow \mathcal{V}^{1,\, 2}_{s-2}}\leq C\norm{U}_{\mathcal{V}^{1,\, 2}},$$
and
$$\norm{\mathcal{A}}_{\mathcal{V}^{1,\, 2}_{s}\rightarrow \mathcal{V}^{1,\, 2}_{s-2}}\leq C(1+\norm{U}_{\mathcal{V}^{1,\, 2}}).$$
\end{lemma}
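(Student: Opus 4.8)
The plan is to prove the bound for $\mathcal{B}$ first, since $\mathcal{A}=-\Delta+\mathcal{B}$ and the Laplacian part is immediate: the operator $-\Delta$ maps $\mathcal{V}^{1,\,2}_{s}$ to $\mathcal{V}^{1,\,2}_{s-2}$ with norm $1$ directly from the definition \eqref{spacedef}, since $(-\Delta)^{(s-2)/2}(-\Delta f)=(-\Delta)^{s/2}f$. So once I control $\mathcal{B}$ with bound $C\norm{U}_{\mathcal{V}^{1,\,2}}$, the triangle inequality gives the $\mathcal{A}$ estimate with constant $C(1+\norm{U}_{\mathcal{V}^{1,\,2}})$.

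For $\mathcal{B}$, recall $\mathcal{B}[f]=\mathbb{P}\nabla\cdot[U\otimes f+f\otimes U]$. By Corollary \ref{pro}, $\mathbb{P}$ is bounded on each $\mathcal{V}^{1,\,2}_{\alpha}$, so it suffices to bound $\nabla\cdot[U\otimes f]$ (and the symmetric term) from $\mathcal{V}^{1,\,2}_{s}$ to $\mathcal{V}^{1,\,2}_{s-2}$. The operator $\nabla\cdot$ is a first-order Fourier multiplier, which by Theorem \ref{singular} (with $\sigma=1$) maps $\mathcal{V}^{1,\,2}_{s-1}$ to $\mathcal{V}^{1,\,2}_{s-2}$. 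Thus I reduce to showing the pointwise product satisfies
\[
\norm{U\otimes f}_{\mathcal{V}^{1,\,2}_{s-1}}\les \norm{U}_{\mathcal{V}^{1,\,2}}\,\norm{f}_{\mathcal{V}^{1,\,2}_{s}}.
\]
The heart of the matter is this product estimate, and here is where the fractional order forces real work: I must control $(-\Delta)^{(s-1)/2}(U\otimes f)$, which is essentially a Riesz potential $\mathbf{I}_{1-s}$ applied to the product (for $s<1$).

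The key step is to exploit Theorem \ref{Sobcap}. Writing $g=(-\Delta)^{s/2}f\in\mathcal{V}^{1,\,2}$, I would express the product's fractional derivative in terms of $\mathbf{I}_{1-s}*(U\cdot g)$-type quantities and apply a Hölder splitting in the capacitary-Morrey scale: the factor $U$ lives in $\mathcal{V}^{1,\,2}$ (i.e. $\int_K|U|^2\les\mathrm{cap}_{1,2}(K)$), and the factor $f=(-\Delta)^{-s/2}g$ must be placed in a complementary Lebesgue exponent via Theorem \ref{Sobcap}, which gives $\mathbf{I}_{s}*g\in L^{2p/(2-sp)}$ relative to capacity starting from $g\in\mathcal{V}^{1,\,2}$. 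Choosing $p=2$ (so the hypothesis $g\in\mathcal{V}^{1,\,2}$ matches $\int_K|g|^2/\mathrm{cap}_{1,2}(K)<\infty$) and the gain exponent $\alpha=s<2/p=1$, the embedding produces a higher-integrability control on $f$ that, combined with $U\in\mathcal{V}^{1,\,2}$ through a Hölder inequality over each compact $K$ and the capacitary characterization of Theorem \ref{CHAR}, closes the product estimate.

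The main obstacle I anticipate is the product/fractional-Leibniz bookkeeping: unlike the integer case $s=1$ handled in the proof of Theorem \ref{Stationary-Sol} (where $U\otimes V\in\mathcal{V}^{1,\,2}_{-1}$ followed from $\mathfrak{M}^{1,\,2}_{+}$ and plain Hölder), for fractional $s$ one must pair the nonlocal operator $(-\Delta)^{(s-1)/2}$ against a pointwise product, and commutator/paraproduct errors are not obviously controlled by the capacitary norms. The cleanest route is to avoid a genuine Leibniz expansion and instead dominate $|(-\Delta)^{(s-1)/2}(U\otimes f)|$ pointwise by a Riesz potential $\mathbf{I}_{1-s}*(|U|\,|f|)$ up to harmless multiplier factors (justified by the symbol estimates underlying Theorem \ref{singular} and the nonnegativity conventions on $\mathcal{V}^{1,\,2}$ for vector functions), then feed $|U|\,|f|$ into the Fefferman--Phong/Sobolev machinery of Theorem \ref{Sobcap}; carefully matching the exponents $s$, $p=2$, and the target space $\mathcal{V}^{1,\,2}_{s-1}$ so that the capacitary quantities balance is the delicate point, but it is exactly what Theorems \ref{IV} and \ref{Sobcap} are designed to deliver.
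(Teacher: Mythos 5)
Your proposal is correct and follows essentially the same route as the paper: reduce to the product estimate $\norm{U\otimes f}_{\mathcal{V}^{1,\,2}_{s-1}}\les\norm{U}_{\mathcal{V}^{1,\,2}}\norm{f}_{\mathcal{V}^{1,\,2}_{s}}$ via Theorem \ref{singular}, apply Theorem \ref{Sobcap} with $p=2$, $\alpha=s$ and H\"older over compact sets to place $f\otimes U$ in the capacitary $L^{2/(2-s)}$ scale, then apply Theorem \ref{Sobcap} once more (with $p=2/(2-s)$, $\alpha=1-s$) to ${\rm\bf I}_{1-s}*|f\otimes U|$. Your worry about fractional Leibniz errors is moot for exactly the reason you identify: the operator hitting the product is of negative order, a positive Riesz kernel, so no commutator analysis is needed.
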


\begin{proof} It is enough to show the conclusion for $\mathcal{B}$. To this end, let $f\in \mathcal{V}^{1,\, 2}_{s}$ and write $g= 
(-\Delta)^{s/2}f\in \mathcal{V}^{1,\, 2}$. Then $f= (-\Delta)^{-s/2}g$ and 
by Theorem \ref{Sobcap} with $p=2$ and $\alpha=s$ we have
\begin{equation} \label{So-in}
\int_{K}|f(x)|^{\frac{2}{1-s}} dx \les \norm{g}_{\mathcal{V}^{1,\, 2}}^{\frac{2}{1-s}}
 {\rm cap}_{1,\, 2}(K)=\norm{f}_{\mathcal{V}_{s}^{1,\, 2}}
^{\frac{2}{1-s}}{\rm cap}_{1,\, 2}(K)
\end{equation}
for all compact sets $K\subset\RR^n$. Thus by H\"older's inequality we get for each compact set $K$,
\begin{eqnarray}\label{SobfU} 
\left(\int_{K}|f\otimes U|^\frac{2}{2-s} dx\right)^{\frac{2-s}{2}} &\leq& 
\left(\int_{K} |f|^{\frac{2}{1-s}} dx\right)^{\frac{1-s}{2}} 
\left(\int_{K}|U|^2 dx\right)^{\frac{1}{2}}\\
&\les& \norm{f}_{\mathcal{V}_{s}^{1, \, 2}} \norm{U}_{\mathcal{V}^{1, \, 2}}{\rm cap}_{1,\,2}(K)^{\frac{2-s}{2}}.\nonumber
\end{eqnarray}

On the other hand,  by Theorem \ref{singular} we have  
\begin{equation*}
\begin{split}
&\Vert (-\Delta)^{(s-2)/2}\mathbb{P}\nabla \cdot [U \otimes f + f \otimes U] \Vert_{\mathcal{V}^{1,\, 2}}       \\
&= \Vert (-\Delta)^{(s-2)/2}\mathbb{P}\nabla\cdot (-\Delta)^{(1-s)/2}(-\Delta)^{(s-1)/2} [U \otimes f + 
f \otimes U] \Vert_{\mathcal{V}^{1,\, 2}}       \\
&\les \Vert (-\Delta)^{(s-2)/2}\mathbb{P}\nabla\cdot (-\Delta)^{(1-s)/2} \Vert_{\mathcal{V}_{s-1}^{1,\, 2}\rightarrow 
\mathcal{V}_{s-1}^{1,\, 2}}
\Vert U \otimes f + f \otimes U\Vert_{\mathcal{V}^{1,\, 2}_{s-1}}\\
&\les \Vert U \otimes f + f \otimes U\Vert_{\mathcal{V}^{1,\, 2}_{s-1}}.
\end{split}
\end{equation*}

To estimate the last term we use \eqref{SobfU} and apply Theorem \ref{Sobcap} with  $p=2/(2-s)$, $\alpha=1-s$ to get
\begin{eqnarray*}
\left[\sup \frac{\int_{K} ({\rm\bf I}_{1-s}*|f\otimes U|)^{2} dx}{{\rm cap}_{1,\,2}(K)}\right]^{\frac{1}{2}}
&\les& \left[\sup \frac{\int_{K} |f\otimes U|^\frac{2}{2-s} dx}{{\rm cap}_{1,\,2}(K)}\right]^{\frac{2-s}{2}}\nonumber\\
&\les& \norm{f}_{\mathcal{V}_{s}^{1, \, 2}} \norm{U}_{\mathcal{V}^{1, \, 2}}.
\end{eqnarray*}

Therefore, 
$$\Vert (-\Delta)^{(s-2)/2}\mathbb{P}\nabla \cdot [U \otimes f + f \otimes U] \Vert_{\mathcal{V}^{1,\, 2}} \les \norm{f}_{\mathcal{V}_{s}^{1, \, 2}} \norm{U}_{\mathcal{V}^{1, \, 2}} $$
and thus completes the proof.
\end{proof}

As an operator on $\mathcal{V}^{1,\, 2}_s$, $s\in \RR$, into itself, the domain of  $-\Delta$  is the sub-space $\mathcal{V}^{1,\, 2}_s \cap \mathcal{V}^{1,\, 2}_{s+2}$. 
By means of the Fourier transform on $\mathcal{S}'/\mathcal{P}$ and Theorem \ref{singular} we see that, for each $\lambda \in\mathbb{C}\setminus [0,\infty)$, the resolvent 
$(\lambda+\Delta)^{-1}$ is given by the Fourier multiplier operator
$$T_{\lambda}(f)=\mathcal{F}^{-1}[(\lambda-|\xi|^2)^{-1}\mathcal{F}(f)(\xi)].$$

More generally, we have the following bound on negative powers of $\lambda+\Delta$.
\begin{lemma} \label{Laplace.decay}  
Let $0 < \gamma <\pi/2$ and $S_{\gamma} = \{\lambda \in \mathbb{C} \setminus [0, \infty): 
|\textup{arg}(\lambda)| \geq \gamma \}$. Then for any $0 \leq a \leq b$, there exists a constant
$C =C(n,\gamma,a,b)$ such that for all $\lambda \in S_\gamma$
\[ \Vert (-\Delta)^{a}(\lambda +\Delta)^{-b}\Vert_{\mathcal{V}^{1,\, 2} \rightarrow \mathcal{V}^{1,\, 2}}
\leq C|\lambda|^{a-b}.  \]
\end{lemma}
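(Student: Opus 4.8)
The plan is to realize $(-\Delta)^{a}(\lambda+\Delta)^{-b}$ as a Fourier multiplier operator and apply Theorem \ref{singular} with $\sigma=0$ and $s=0$, after first using the scale invariance of the $\mathcal{V}^{1,\,2}$ norm to extract the factor $|\lambda|^{a-b}$ and reduce to the case $|\lambda|=1$. By the Fourier calculus on $\mathcal{S}'/\mathcal{P}$ and the description of the resolvent symbol recalled above, the operator in question is the multiplier operator $T_{m_\lambda}$ with
\[ m_\lambda(\xi)=\frac{|\xi|^{2a}}{(\lambda-|\xi|^2)^{b}},\qquad \xi\in\RR^n\setminus\{0\}, \]
where $(\lambda-|\xi|^2)^{-b}$ is the branch furnished by the functional calculus of $-\Delta$; this is well defined and smooth on $\RR^n\setminus\{0\}$ since for $\lambda\in S_\gamma$ the values $\lambda-|\xi|^2$ trace a ray avoiding $0$.

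First I would record the scaling principle. Since $\norm{\rho\,g(\rho\,\cdot)}_{\mathcal{V}^{1,\,2}}=\norm{g}_{\mathcal{V}^{1,\,2}}$ for all $\rho>0$, the dilation $g\mapsto g(\rho\,\cdot)$ acts on $\mathcal{V}^{1,\,2}$ as $\rho^{-1}$ times a surjective isometry; consequently, conjugating a multiplier operator by this dilation replaces its symbol $m(\xi)$ by $m(\rho\xi)$ and preserves its $\mathcal{V}^{1,\,2}\to\mathcal{V}^{1,\,2}$ operator norm. Applying this with $\rho=|\lambda|^{1/2}$ and writing $\lambda=|\lambda|e^{i\theta}$ gives
\[ m_\lambda(\rho\xi)=\frac{|\lambda|^{a}|\xi|^{2a}}{|\lambda|^{b}(e^{i\theta}-|\xi|^2)^{b}}=|\lambda|^{a-b}\,\wt m_\theta(\xi),\qquad \wt m_\theta(\xi):=\frac{|\xi|^{2a}}{(e^{i\theta}-|\xi|^2)^{b}}, \]
so that $\norm{T_{m_\lambda}}_{\mathcal{V}^{1,\,2}\to\mathcal{V}^{1,\,2}}=|\lambda|^{a-b}\norm{T_{\wt m_\theta}}_{\mathcal{V}^{1,\,2}\to\mathcal{V}^{1,\,2}}$. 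It then suffices to bound $\norm{T_{\wt m_\theta}}_{\mathcal{V}^{1,\,2}\to\mathcal{V}^{1,\,2}}$ by a constant $C(n,\gamma,a,b)$, uniformly over the compact range $\gamma\le|\theta|\le\pi$.

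The analytic heart is then to verify that $\wt m_\theta$ satisfies the Mikhlin--H\"ormander condition $|\partial^\alpha\wt m_\theta(\xi)|\le C(n,\gamma,a,b)\,|\xi|^{-|\alpha|}$ for all $|\alpha|\le n$, uniformly in $\theta$, and to invoke Theorem \ref{singular} (with $\sigma=0$), whose conclusion is governed precisely by this Mikhlin constant. The key quantitative input is the uniform lower bound $|e^{i\theta}-|\xi|^2|\ge c(\gamma)\,(1+|\xi|^2)$, which follows from $|\mathrm{Im}(e^{i\theta})|=|\sin\theta|\ge\sin\gamma>0$ together with an elementary estimate for $|\xi|^2\ge 2$. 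Granting this, a Fa\`a di Bruno expansion of $\partial^\alpha(e^{i\theta}-|\xi|^2)^{-b}$ (only the first and second derivatives of the quadratic $e^{i\theta}-|\xi|^2$ survive) yields $|\partial^\alpha(e^{i\theta}-|\xi|^2)^{-b}|\les (1+|\xi|^2)^{-b}|\xi|^{-|\alpha|}$ away from the origin; combining this with $|\partial^\beta|\xi|^{2a}|\les|\xi|^{2a-|\beta|}$ through the Leibniz rule gives $|\partial^\alpha\wt m_\theta(\xi)|\les|\xi|^{2a}(1+|\xi|^2)^{-b}|\xi|^{-|\alpha|}$. Here the hypothesis $0\le a\le b$ is exactly what forces $|\xi|^{2a}(1+|\xi|^2)^{-b}\le C$, with $a\ge0$ controlling the regime $|\xi|\to0$ and $a\le b$ the regime $|\xi|\to\infty$.

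The main obstacle I anticipate is bookkeeping these symbol estimates uniformly in $\theta$ all the way down to the edge $|\theta|=\gamma$, where the denominator comes closest to the positive real axis; this is precisely the point at which the angular separation enters, through $\sin\gamma$, and it is essential that the constant delivered by Theorem \ref{singular} depends only on the Mikhlin constant of the symbol and not on $\theta$ or $\lambda$. Once the uniform bound $\norm{T_{\wt m_\theta}}_{\mathcal{V}^{1,\,2}\to\mathcal{V}^{1,\,2}}\le C(n,\gamma,a,b)$ is established, undoing the scaling immediately produces $\norm{(-\Delta)^{a}(\lambda+\Delta)^{-b}}_{\mathcal{V}^{1,\,2}\to\mathcal{V}^{1,\,2}}\le C|\lambda|^{a-b}$ for all $\lambda\in S_\gamma$, which is the asserted estimate.
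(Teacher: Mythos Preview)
Your proposal is correct and follows essentially the same route as the paper, whose proof is the single line ``This lemma follows directly from Theorem \ref{singular}.'' You have simply supplied the details that invocation requires---writing the symbol explicitly, using the scale invariance of $\mathcal{V}^{1,\,2}$ to reduce to $|\lambda|=1$, and verifying the Mikhlin--H\"ormander condition uniformly in $\theta$---so there is no substantive difference in approach.
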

\begin{proof} This lemma follows directly from Theorem \ref{singular}.
\end{proof}

Next, for each $s\in (0, 1)$, a domain of the operator $\mathcal{A}$ on $\mathcal{V}^{1,\, 2}_s$ is naturally given by
\[ D(\mathcal{A}) = \{ f \in \mathcal{V}^{1,\, 2}_s : \mathcal{A}[f] \in \mathcal{V}^{1,\, 2}_s \}. \]

Fix now $0 < \gamma <\pi/2$ and let $\lambda \in S_{\gamma}$, where $S_{\gamma}$ is as in Lemma \ref{Laplace.decay}. 
By Lemmas \ref{Adomain} and \ref{Laplace.decay} we see that $(\lambda +\Delta)^{-1}\mathcal{B}: 
\mathcal{V}^{1,\, 2}_s\rightarrow \mathcal{V}^{1,\, 2}_s$ with bound
$$\norm{(\lambda +\Delta)^{-1}\mathcal{B}}_{\mathcal{V}^{1,\, 2}_s\rightarrow \mathcal{V}^{1,\, 2}_s}\leq 
M \norm{U}_{\mathcal{V}^{1,\, 2}},
$$where $M$ depends only on $s$ and $\gamma$.
Thus when $\norm{U}_{\mathcal{V}^{1,\, 2}}< \frac{1}{2M}$, the operator $1-(\lambda +\Delta)^{-1}\mathcal{B}$ 
is invertible whose inverse is given by a Von Neumann series:
\begin{equation} \label{Von-1}
[1-(\lambda +\Delta)^{-1}\mathcal{B}]^{-1}=\sum_{j=0}^{\infty} [(\lambda+\Delta)^{-1}\mathcal{B}]^j \end{equation}
on $\mathcal{V}^{1,\, 2}_s$, with   
$$\norm{[1-(\lambda +\Delta)^{-1}\mathcal{B}]^{-1}}_{\mathcal{V}^{1,\, 2}_s\rightarrow \mathcal{V}^{1,\, 2}_s}\leq 
\frac{1}{1-M\norm{U}_{\mathcal{V}^{1,\, 2}}} \leq 2.$$

It is then easy to check that, for such $\lambda$ and $U$, the operator $\lambda-\mathcal{A}$ is invertible with 
\begin{equation} \label{Von-2}
(\lambda-\mathcal{A})^{-1}=[1-(\lambda +\Delta)^{-1}\mathcal{B}]^{-1}(\lambda +\Delta)^{-1} \end{equation}
on $\mathcal{V}^{1,\, 2}_s$. Moreover, it follows from Lemma \ref{Laplace.decay} and the commutativity of 
$(\lambda +\Delta)^{-1}$ and $(-\Delta)^{\frac{-s}{2}}$
that $(\lambda-\mathcal{A})^{-1}$ is bounded on $\mathcal{V}^{1,\, 2}_s$ with
\begin{equation} \label{R.s.est}
\norm{(\lambda-\mathcal{A})^{-1}}_{\mathcal{V}^{1,\, 2}_s\rightarrow \mathcal{V}^{1,\, 2}_s}\leq C|\lambda|^{-1}, \quad
\ \forall\,  \lambda \in S_\gamma. 
\end{equation}

This shows that when $\norm{U}_{\mathcal{V}^{1,\, 2}}$ is sufficiently small  the sector 
$S_{\gamma}$ is contained in the resolvent set of $\mathcal{A}$.
The following lemma says even stronger that, 
in fact, $(\lambda-\mathcal{A})^{-1}$ maps boundedly from $\mathcal{V}^{1,\, 2}_s$ into $\mathcal{V}^{1,\, 2}_\sigma$ 
for all $s, \sigma\in (-2,1)$ such that
$s\leq\sigma\leq2+s$. 

In what follows, $\gamma$ is a fixed number in $(0, \pi/2)$ and $S_{\gamma}$ is as defined in Lemma \ref{Laplace.decay}.

\begin{lemma} \label{Res} Let $\alpha, \sigma$ be in $(-2,1)$,  $|\sigma - \alpha| \leq 2$. 
Then, there exists $\epsilon_1 =\epsilon_1(\alpha, \sigma)$ 
such that if $\norm{U}_{\mathcal{V}^{1,\, 2}} < \epsilon_1$, the operator 
$(\lambda+\Delta)^{-1}\mathcal{B}(\lambda-\mathcal{A})^{-1}$  can be extended to a bounded map from 
$\mathcal{V}^{1,\, 2}_{\alpha}$ to $\mathcal{V}^{1,\, 2}_{\sigma}$ for all $\lambda \in S_\gamma$ and 
the extension enjoys the estimate
\begin{equation} \label{se.est}
\Vert (\lambda+\Delta)^{-1}\mathcal{B}(\lambda-\mathcal{A})^{-1}\Vert_{
\mathcal{V}^{1,\, 2}_\alpha\rightarrow \mathcal{V}^{1,\, 2}_\sigma} \leq C(\alpha, \sigma)|\lambda|^{(\sigma-\alpha)/2-1}, \ 
\quad \forall\,  \lambda\, \in S_\gamma.
\end{equation}
Moreover, if $\alpha \leq \sigma$, the operator $(\lambda-\mathcal{A})^{-1}$ can 
be extended to a bounded operator 
from $\mathcal{V}^{1,\, 2}_{\alpha}$ to $\mathcal{V}^{1,\, 2}_{\sigma}$ for all $\lambda \in S_\gamma$ with
\begin{equation} \label{R.est} \Vert (\lambda-\mathcal{A})^{-1} \Vert_{
\mathcal{V}^{1,\, 2}_{\alpha}\rightarrow \mathcal{V}^{1,\, 2}_{\sigma}} \leq C(\alpha, \sigma) |\lambda|^{(\sigma-\alpha)/2-1}, 
\quad  \forall\, \lambda \in S_\gamma.
 \end{equation}
\end{lemma}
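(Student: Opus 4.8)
The plan is to establish the finer estimate \eqref{se.est} first, for the whole range $|\sigma-\alpha|\le 2$, and then to read off \eqref{R.est} from it. The bridge is the second resolvent identity: since $\mathcal{A}=-\Delta+\mathcal{B}$, one has, for $\lambda\in S_\gamma$ and $\norm{U}_{\mathcal{V}^{1,\, 2}}$ small enough that $(\lambda-\mathcal{A})^{-1}$ exists (by \eqref{Von-1}, \eqref{Von-2} and \eqref{R.s.est}),
$$(\lambda-\mathcal{A})^{-1}=(\lambda+\Delta)^{-1}+(\lambda+\Delta)^{-1}\mathcal{B}(\lambda-\mathcal{A})^{-1}.$$
Granting \eqref{se.est}, the last term already has the right decay $|\lambda|^{(\sigma-\alpha)/2-1}$ as a map $\mathcal{V}^{1,\, 2}_\alpha\to\mathcal{V}^{1,\, 2}_\sigma$. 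For the bare term I would use Lemma \ref{Laplace.decay} in the lifted form $\norm{(\lambda+\Delta)^{-1}}_{\mathcal{V}^{1,\, 2}_\alpha\to\mathcal{V}^{1,\, 2}_\sigma}=\norm{(-\Delta)^{(\sigma-\alpha)/2}(\lambda+\Delta)^{-1}}_{\mathcal{V}^{1,\, 2}\to\mathcal{V}^{1,\, 2}}\les|\lambda|^{(\sigma-\alpha)/2-1}$, which is valid precisely when $0\le\sigma-\alpha\le 2$. This is exactly where the extra hypothesis $\alpha\le\sigma$ of the second part is needed: the naked Laplacian resolvent only gains regularity, so it maps up the scale with a negative power of $\lambda$ but not down it.

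For \eqref{se.est} itself I would expand via the Von Neumann series \eqref{Von-1}, \eqref{Von-2}, giving
$$(\lambda+\Delta)^{-1}\mathcal{B}(\lambda-\mathcal{A})^{-1}=\sum_{j=1}^{\infty}\big[(\lambda+\Delta)^{-1}\mathcal{B}\big]^{j}(\lambda+\Delta)^{-1},$$
and bound each summand by routing it through a single intermediate index $s^\ast\in(0,1)$ depending on $\alpha,\sigma$. The $j$-th summand contains $j$ factors $\mathcal{B}$ and $j+1$ factors $(\lambda+\Delta)^{-1}$; I would arrange the composition so that every $\mathcal{B}$ acts as $\mathcal{V}^{1,\, 2}_{s^\ast}\to\mathcal{V}^{1,\, 2}_{s^\ast-2}$ (the only regime covered by Lemma \ref{Adomain}, with norm $\les\norm{U}_{\mathcal{V}^{1,\, 2}}$), each interior $(\lambda+\Delta)^{-1}$ undoes this by gaining exactly two derivatives $\mathcal{V}^{1,\, 2}_{s^\ast-2}\to\mathcal{V}^{1,\, 2}_{s^\ast}$ (the endpoint $a=b=1$ of Lemma \ref{Laplace.decay}, contributing $|\lambda|^{0}$), while the first resolvent performs $\mathcal{V}^{1,\, 2}_\alpha\to\mathcal{V}^{1,\, 2}_{s^\ast}$ and the last one $\mathcal{V}^{1,\, 2}_{s^\ast-2}\to\mathcal{V}^{1,\, 2}_\sigma$. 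Since the interior factors contribute $|\lambda|^{0}$, the $\lambda$-power reduces to $(s^\ast-\alpha)/2-1+(\sigma-s^\ast+2)/2-1=(\sigma-\alpha)/2-1$, independently of $j$. Hence the $j$-th term is bounded by $C\,(C\norm{U}_{\mathcal{V}^{1,\, 2}})^{j}\,|\lambda|^{(\sigma-\alpha)/2-1}$ with $C=C(\alpha,\sigma,\gamma,n)$ uniform in $j$ and $\lambda$. Choosing $\epsilon_1$ so that $C\norm{U}_{\mathcal{V}^{1,\, 2}}<1$ makes the geometric series converge, which yields \eqref{se.est}; as these same bounds give absolute convergence in operator norm, the sum defines the claimed bounded extension from $\mathcal{V}^{1,\, 2}_\alpha$.

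The feasibility of the choice of $s^\ast$ is a small but essential point: the constraints above force $s^\ast\ge\alpha$, $s^\ast\ge\sigma$, $s^\ast\le\alpha+2$, $s^\ast\le\sigma+2$, together with $s^\ast\in(0,1)$, and one checks these are simultaneously satisfiable exactly under $\alpha,\sigma\in(-2,1)$ and $|\sigma-\alpha|\le2$. I expect the main obstacle to be precisely this bookkeeping: because $\mathcal{B}$ is bounded only out of $\mathcal{V}^{1,\, 2}_s$ with $s\in(0,1)$, the entire argument must be funneled through the fixed good index $s^\ast$ while still reaching the endpoints $\alpha$ and $\sigma$, and one must verify both that the per-term constants stay uniform in $j$ (so the series sums) and that the $\lambda$-exponent is genuinely $j$-independent. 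Once the exponent count is organized this way, the rest is a direct application of Lemmas \ref{Adomain} and \ref{Laplace.decay} together with the commutativity of the fractional powers $(-\Delta)^{\theta/2}$ with $(\lambda+\Delta)^{-1}$, used to lift the scalar-level resolvent bounds to the $\mathcal{V}^{1,\, 2}_\alpha$ scale.
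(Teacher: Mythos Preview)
Your proposal is correct and follows essentially the same route as the paper. The paper conjugates by fractional powers of $-\Delta$ to reduce everything to operators on $\mathcal{V}^{1,\,2}$, introducing the operator $\mathcal{C}=(-\Delta)^{\theta-1}\mathcal{B}(-\Delta)^{-\theta}$ with a parameter $\theta\in(0,1/2)$ subject to $0\le\sigma/2+1-\theta\le1$ and $0\le\theta-\alpha/2\le1$; your intermediate index $s^\ast$ is precisely $2\theta$, and your constraints $\max(\alpha,\sigma)\le s^\ast\le\min(\alpha,\sigma)+2$, $s^\ast\in(0,1)$ translate verbatim into the paper's feasibility condition $[\max\{\sigma,\alpha\}/2,\,1+\min\{\sigma,\alpha\}/2]\cap(0,1/2)\neq\emptyset$. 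The Neumann-series bookkeeping, the $\lambda$-exponent count, and the derivation of \eqref{R.est} from \eqref{se.est} via the resolvent identity are likewise identical in substance.
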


\begin{proof} We follow the approach in \cite{KY2} using Lemmas \ref{Adomain}--\ref{Laplace.decay}. 
For each $\lambda \in S_\gamma$, we set $\mathcal{R}(\lambda)=(\lambda-\mathcal{A})^{-1}$ and write
\begin{equation} \label{A.expension}
\begin{split}
& (-\Delta)^{\sigma/2}\mathcal{R}(\lambda)(-\Delta)^{-\alpha/2} \\
& = (-\Delta)^{\sigma/2}(\lambda +\Delta)^{-1}(-\Delta)^{-\alpha/2} \\ 
& \qquad\qquad +(-\Delta)^{\sigma/2}(\lambda+\Delta)^{-1}\mathcal{B}\mathcal{R}(\lambda)(-\Delta)^{-\alpha/2}\\
& = (-\Delta)^{(\sigma-\alpha)/2}(\lambda +\Delta)^{-1}
+ (-\Delta)^{\sigma/2}(\lambda+\Delta)^{-1}\mathcal{B}\mathcal{R}(\lambda)(-\Delta)^{-\alpha/2}.
\end{split}
\end{equation}

For $0 \leq \sigma -\alpha \leq 2$, it follows from Lemma \ref{Laplace.decay} that 
there is a constant $C_0 = C_0(\alpha, \sigma)$ 
such that the penultimate term in  \eqref{A.expension} can be controlled as
\begin{equation*} % \label{fi.est}
\Vert (-\Delta)^{(\sigma-\alpha)/2}(\lambda +\Delta)^{-1}\Vert_{\mathcal{V}^{1,\, 2} \rightarrow \mathcal{V}^{1,\, 2}}
\leq C_0|\lambda|^{(\sigma-\alpha)/2 -1}.  \end{equation*}

Therefore, to obtain \eqref{se.est} and \eqref{R.est}, it suffices to control the last term in the right hand side of 
\eqref{A.expension} for all 
 $\alpha, \sigma \in (-2, 1)$ with $|\sigma -\alpha| \leq 2$. Note that for such $\alpha, \sigma$ we have 
\begin{equation*}
 [\max\{\sigma, \alpha\}/2, 1+ \min\{\sigma, \alpha\}/2 ] \cap (0,1/2) \not= \phi,
\end{equation*}
and thus, we can find a real number $\theta\in (0, 1/2)$ such that
\begin{equation}\label{theta.cond}
 0 \leq \sigma/2 +1-\theta \leq 1, \quad   0\leq \theta-\alpha/2 \leq 1. 
\end{equation}

With this choice of $\theta$, we let $\mathcal{C}$ be an operator on $\mathcal{V}^{1,\, 2}$ defined by
$$\mathcal{C}[f]=(-\Delta)^{\theta-1}\mathcal{B}(-\Delta)^{-\theta}(f), \qquad\qquad f\in \mathcal{V}^{1,\, 2}.$$

Then, it follows from \eqref{theta.cond} and  Lemmas \ref{Adomain}--\ref{Laplace.decay} that there 
are constants $C_1$ and $C_2 = C_2(\alpha, \sigma)$ such that
\begin{equation}\label{C-ope}
\Vert (\lambda +\Delta)^{-1}(-\Delta)
 \Vert_{\mathcal{V}^{1,\, 2}\rightarrow \mathcal{V}^{1,\, 2}} \leq C_1, 
\quad \Vert \mathcal{C} \Vert_{\mathcal{V}^{1,\, 2}\rightarrow \mathcal{V}^{1,\, 2}} \leq C_2 \norm{U}_{\mathcal{V}^{1,\, 2}}.
\end{equation}
Hence, if $\norm{U}_{\mathcal{V}^{1,\, 2}} < \epsilon_1 = \frac{1}{2C_1C_2}$, the series 
$$\sum_{j=0}^\infty \left \{(\lambda +\Delta)^{-1}(-\Delta)\mathcal{C}\right\}^{j}$$
converges in the space $\mathcal{L}(\mathcal{V}^{1,\, 2}, \mathcal{V}^{1,\, 2})$ of all linear bounded operators 
from $\mathcal{V}^{1,\, 2}$ into $\mathcal{V}^{1,\, 2}$. Moreover,
\begin{equation}\label{series.ope}
\left \Vert \sum_{j=0}^\infty \left \{(\lambda +\Delta)^{-1}(-\Delta)\mathcal{C}\right\}^{j} \right
\Vert_{\mathcal{V}^{1,\, 2}\rightarrow \mathcal{V}^{1,\, 2}} \leq 2.
\end{equation}

On the other hand, it follows from \eqref{theta.cond} and  Lemmas \ref{Adomain}--\ref{Laplace.decay} that 
there is $C_3 = C_3(\alpha, \sigma)$ such that
\begin{equation}\label{series.est}
\begin{split}
& \Vert (-\Delta)^{\sigma/2 +1-\theta}(\lambda+\Delta)^{-1} \Vert_{\mathcal{V}^{1,\, 2}\rightarrow \mathcal{V}^{1,\, 2}}
\leq C_3|\lambda|^{\sigma/2 -\theta}, \\
& \Vert (-\Delta)^{\theta-\alpha/2} (\lambda +\Delta)^{-1}\Vert_{\mathcal{V}^{1,\, 2}\rightarrow \mathcal{V}^{1,\, 2}}
\leq C_3|\lambda|^{\theta - \alpha/2 -1}, \quad \forall \,\lambda \in S_\gamma.
%& \Vert \mathcal{C} \Vert_{\mathcal{V}^{1,\, 2}\rightarrow \mathcal{V}^{1,\, 2}} \les \norm{U}_{\mathcal{V}^{1,\, 2}}, \\
%& \Vert \left \{(\lambda +\Delta)^{-1}(-\Delta)\mathcal{C}\right\}^{j}
% \Vert_{\mathcal{V}^{1,\, 2}\rightarrow \mathcal{V}^{1,\, 2}} \les \norm{U}_{\mathcal{V}^{1,\, 2}}^j,\quad  j = 0, 1, 2, \dots
\end{split}
\end{equation}

Moreover, from \eqref{Von-1} and \eqref{Von-2}, 
the last term in the right hand side of \eqref{A.expension} can be expanded as  
\begin{equation*}
\begin{split}
& (-\Delta)^{\sigma/2}(\lambda+\Delta)^{-1}\mathcal{B}\mathcal{R}(\lambda)(-\Delta)^{-\alpha/2}\\
& = (-\Delta)^{\sigma/2}(\lambda+\Delta)^{-1}\mathcal{B}\left \{1 -(\lambda +\Delta)^{-1}\mathcal{B}\right \}^{-1}
(\lambda +\Delta)^{-1} (-\Delta)^{-\alpha/2}\\
& = (-\Delta)^{\sigma/2}(\lambda+\Delta)^{-1}\mathcal{B} 
\sum_{j=0}^\infty \left \{(\lambda +\Delta)^{-1}\mathcal{B}\right\}^{j} (\lambda +\Delta)^{-1} (-\Delta)^{-\alpha/2}\\
& = (-\Delta)^{\sigma/2 +1-\theta}(\lambda+\Delta)^{-1} \mathcal{C}
\sum_{j=0}^\infty \left \{(\lambda +\Delta)^{-1}(-\Delta)\mathcal{C}\right\}^{j}(-\Delta)^{\theta-\alpha/2} (\lambda +\Delta)^{-1}. 
\end{split}
\end{equation*}

The estimates \eqref{C-ope}--\eqref{series.est} together with this expansion imply 
that the operator $(-\Delta)^{\sigma/2}(1+\Delta)^{-1}\mathcal{B}\mathcal{R}(\lambda)(-\Delta)^{-\alpha/2}$ is 
in $\mathcal{L}(\mathcal{V}^{1,\, 2}, \mathcal{V}^{1,\, 2})$. %, and so 
%is $(-\Delta)^{\sigma/2}\mathcal{R}(\lambda)(\Delta)^{-\alpha/2}$. 
Moreover, 
\[\left \Vert (-\Delta)^{\sigma/2}(\lambda+\Delta)^{-1}\mathcal{B}\mathcal{R}(\lambda)(-\Delta)^{-\alpha/2} \right \Vert
_{\mathcal{V}^{1,\, 2}\rightarrow \mathcal{V}^{1,\, 2}} \leq C_3^2\, C_1^{-1}|\lambda|^{(\sigma-\alpha)/2-1}. \]
%from the above expansion and \eqref{series.est}, we get
%\begin{equation*} 
%\Vert (-\Delta)^{\sigma/2}(\lambda+\Delta)^{-1}\mathcal{B}\mathcal{R}(\lambda)(-\Delta)^{-\alpha/2}\Vert_{
%\mathcal{V}^{1,\, 2}\rightarrow \mathcal{V}^{1,\, 2}} \les |\lambda|^{(\sigma-\alpha)/2-1},
%\end{equation*}

This completes the proof of the lemma.
\end{proof}

\begin{remark} Note that $\epsilon_1(\alpha,\sigma)$ may depend also on $\gamma$ but we shall ignore this dependence as $\gamma$
 is fixed throughout the 
paper. 
\end{remark}

Let us now define the Dunford integral
\begin{equation} \label{flow.def}
 e^{-\mathcal{A}t} =\frac{1}{2\pi i}\int_\Gamma e^{-\lambda t}(\lambda -\mathcal{A})^{-1} d\lambda, \quad t >0,
\end{equation}
where $\Gamma$ is a smooth curve in $S_\gamma$ which is oriented counterclockwise and 
connects $e^{-i\vartheta} \infty$ to $e^{i\vartheta} \infty$ for some $0< \gamma <\vartheta 
< \pi/2$.  Note that $D(\mathcal{A})$ may not be dense in $\mathcal{V}^{1,\, 2}_s$. 
However, from \eqref{R.s.est} and a simple extension of the standard theory of analytic semigroups 
(see, e.g., \cite[Proposition 1.1]{S}), we see that the integral in \eqref{flow.def} 
is well-defined as an operator from $\mathcal{V}^{1,\, 2}_{s}$ to $\mathcal{V}^{1,\, 2}_{s}$ and independent of
the choice of $\Gamma$. Moreover, $e^{-\mathcal{A} t}$ is a semigroup, and
\[ \frac{d}{dt} e^{-\mathcal{A}t } = -\mathcal{A} e^{-\mathcal{A}t}, \quad 
\Vert e^{-\mathcal{A}t} \Vert_{\mathcal{V}^{1,\, 2}_s \rightarrow \mathcal{V}^{1,\, 2}_s} \les 1, \quad \forall\,
t >0.  \]

Also, note that as a simple extension of  the standard semigroup theory, the 
property $\lim_{t\rightarrow 0^+} e^{-\mathcal{A}t} w = w$ 
 holds only for $w \in \overline{D(\mathcal{A})}$ which is not the same as $\mathcal{V}^{1,\, 2}_s$ (see 
\cite[Proposition 1.2]{S}). 
%This is the only 
%difference compared with the standard theory of semigroups for operators with dense domains. 

Our next goal is to apply Lemma \ref{Res} to extend $e^{-\mathcal{A}t}$ to a bounded operator from $\mathcal{V}^{1,\, 2}_{\alpha}$ 
into 
$\mathcal{V}^{1,\, 2}_{\sigma}$ for $0 \leq \sigma -\alpha \leq 2$ and $\alpha, \sigma \in (-2, 1)$. 
For such $\alpha$ and $\sigma$, recall that  $\epsilon_1(\alpha, \sigma)$ has been defined in Lemma \ref{Res}.

\begin{proposition}\label{A.decay} Let $\alpha, \sigma \in (-2, 1)$ be such that $|\sigma -\alpha| \leq 2$.
Assume that $\norm{U}_{\mathcal{V}^{1,\, 2}} < \epsilon_1(\alpha, \sigma)$. Then there exists a constant 
$C = C(\alpha, \sigma)$ such that for all $t >0$,
$$\left \Vert e^{-\mathcal{A}t}\right\Vert_{\mathcal{V}^{1,\, 2}_{\alpha} 
\rightarrow \mathcal{V}^{1,\, 2}_{\sigma}}
\leq C t^{(\alpha-\sigma)/2}, \quad \text{if~} \alpha \leq \sigma,$$
and 
$$\left \Vert e^{-\mathcal{A}t} -1 \right\Vert_{\mathcal{V}^{1,\, 2}_{\alpha} 
\rightarrow \mathcal{V}^{1,\, 2}_{\sigma}}  \leq Ct^{(\alpha-\sigma)/2}, \quad 
\text{if~}  \alpha \geq \sigma.
$$

%In particular \textup{(}when $(\alpha, \sigma) = (0,1/2)$ and $(\alpha, \sigma) = (-1, 1/2)$\textup{)},
%\[ \left \Vert e^{-\mathcal{A}t}f\right\Vert_{\mathcal{V}^{1,\, 2}_{1/2}}
% \les t^{-1/4}\norm{f}_{\mathcal{V}^{1,\, 2}}, \quad \text{and} \quad
%\left \Vert e^{-\mathcal{A}t}f\right\Vert_{\mathcal{V}^{1,\, 2}_{1/2}}
% \les t^{-3/4}\norm{f}_{\mathcal{V}^{1,\, 2}_{-1}}.
% \]
\end{proposition}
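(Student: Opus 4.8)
The plan is to estimate the Dunford integral \eqref{flow.def} directly, feeding in the resolvent bounds of Lemma \ref{Res} and choosing, for each fixed $t>0$, a contour $\Gamma_t$ adapted to the scaling $\lambda\mapsto\lambda/t$ that stays a distance $\sim 1/t$ from the origin. Concretely I would take $\Gamma_t$ to be the two rays $\{se^{\pm i\vartheta}:s\geq 1/t\}$ joined by the circular arc $\{(1/t)e^{i\phi}:\vartheta\leq|\phi|\leq\pi\}$, oriented as in \eqref{flow.def}. Since $(\lambda-\mathcal{A})^{-1}$ is analytic on $S_\gamma$ and the estimates below make the integrand absolutely integrable on $\Gamma_t$, this contour represents the same operator as \eqref{flow.def}. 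The $t$-dependent radius is the device that lets a single computation cover both regimes and, in the second case, avoids the non-integrable singularity of $|\lambda|^{(\sigma-\alpha)/2-1}$ at $\lambda=0$.

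For $\alpha\leq\sigma$ I would insert the bound \eqref{R.est}, $\norm{(\lambda-\mathcal{A})^{-1}}_{\mathcal{V}^{1,\,2}_\alpha\to\mathcal{V}^{1,\,2}_\sigma}\les|\lambda|^{(\sigma-\alpha)/2-1}$, and write $\beta=(\sigma-\alpha)/2\geq 0$. On the rays the substitution $u=st$ gives $\int_{1/t}^\infty e^{-st\cos\vartheta}s^{\beta-1}\,ds=t^{-\beta}\int_1^\infty e^{-u\cos\vartheta}u^{\beta-1}\,du\les t^{-\beta}$ (note $\cos\vartheta>0$), while on the arc one has $|\lambda|^{\beta-1}=t^{1-\beta}$, $|e^{-\lambda t}|\leq e$, and arc length $\sim 1/t$, so that piece also contributes $\les t^{-\beta}$. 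Summing yields $\norm{e^{-\mathcal{A}t}}_{\mathcal{V}^{1,\,2}_\alpha\to\mathcal{V}^{1,\,2}_\sigma}\les t^{-\beta}=t^{(\alpha-\sigma)/2}$.

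For $\alpha\geq\sigma$ the key is to subtract the identity. Because $\Gamma_t$, closed by a large arc in the right half-plane (where $e^{-\lambda t}$ decays), encircles the point $\lambda=0$, the same residue computation that underlies the semigroup property gives $\frac{1}{2\pi i}\int_{\Gamma_t}\lambda^{-1}e^{-\lambda t}\,d\lambda=1$, whence
$$e^{-\mathcal{A}t}-1=\frac{1}{2\pi i}\int_{\Gamma_t}e^{-\lambda t}\big[(\lambda-\mathcal{A})^{-1}-\lambda^{-1}\big]\,d\lambda.$$
I would then prove the resolvent-difference estimate $\norm{(\lambda-\mathcal{A})^{-1}-\lambda^{-1}}_{\mathcal{V}^{1,\,2}_\alpha\to\mathcal{V}^{1,\,2}_\sigma}\les|\lambda|^{(\sigma-\alpha)/2-1}$. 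From \eqref{Von-2} one has the second-resolvent identity $(\lambda-\mathcal{A})^{-1}-\lambda^{-1}=[(\lambda+\Delta)^{-1}-\lambda^{-1}]+(\lambda+\Delta)^{-1}\mathcal{B}(\lambda-\mathcal{A})^{-1}$; the last term is exactly \eqref{se.est}, and for the free part I would use the multiplier identity $(-\Delta)^{(\sigma-\alpha)/2}[(\lambda+\Delta)^{-1}-\lambda^{-1}]=\lambda^{-1}(-\Delta)^{(\sigma-\alpha)/2+1}(\lambda+\Delta)^{-1}$ together with Lemma \ref{Laplace.decay} applied with $a=(\sigma-\alpha)/2+1$ and $b=1$ (here $\alpha\geq\sigma$ is precisely what keeps $a\in[0,1]=[0,b]$). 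With $\beta'=(\alpha-\sigma)/2\geq 0$, the ray/arc computation repeated with exponent $-\beta'$ gives $\int_{1/t}^\infty e^{-st\cos\vartheta}s^{-\beta'-1}\,ds\les t^{\beta'}$ and an arc contribution $\sim t^{\beta'+1}\cdot(1/t)=t^{\beta'}$, hence $\norm{e^{-\mathcal{A}t}-1}_{\mathcal{V}^{1,\,2}_\alpha\to\mathcal{V}^{1,\,2}_\sigma}\les t^{\beta'}=t^{(\alpha-\sigma)/2}$.

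I expect the only genuinely delicate points to be, first, the representation step for $\alpha\geq\sigma$: justifying the scalar identity for the identity operator and the attendant contour deformation, which is subtle because $D(\mathcal{A})$ need not be dense and which is exactly why the origin-avoiding contour $\Gamma_t$ is needed; and second, establishing the resolvent-difference bound with the sharp power of $|\lambda|$, which forces the splitting into a free part (controlled by Lemma \ref{Laplace.decay}) and a perturbation part (controlled by \eqref{se.est}) and relies on the smallness assumption $\norm{U}_{\mathcal{V}^{1,\,2}}<\epsilon_1(\alpha,\sigma)$. The ray/arc integrals themselves are routine once these two ingredients are in place.
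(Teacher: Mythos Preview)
Your approach is correct and essentially identical to the paper's: the same $t$-scaled contour consisting of two rays at angle $\pm\vartheta$ from radius $1/t$ joined by a circular arc of radius $1/t$, and the same splitting $(\lambda-\mathcal{A})^{-1}=(\lambda+\Delta)^{-1}+(\lambda+\Delta)^{-1}\mathcal{B}(\lambda-\mathcal{A})^{-1}$ coming from \eqref{A.expension}. The only cosmetic difference is in the case $\alpha\geq\sigma$: the paper first separates off $e^{\Delta t}-1$ and bounds it directly via the multiplier Theorem~\ref{singular}, whereas you keep everything under the integral and bound the free resolvent difference $(\lambda+\Delta)^{-1}-\lambda^{-1}$ via Lemma~\ref{Laplace.decay}; both routes give the same power of $t$.
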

\begin{proof} The proof of this proposition follows from a standard argument in the theory of semigroups (see \cite{Lu, Pa}). However, 
we present it here for the sake of completeness. Let $\gamma$ be fixed as in Lemma \ref{Res}. For each $t >0$, 
and $\gamma < \vartheta < \pi/2$, we let 
$\Gamma = \Gamma_1 \cup \Gamma_{2} \cup \Gamma_3$ with
$$\Gamma_1  =\{re^{-i\vartheta} : t^{-1} \leq r < \infty \}, \quad 
$$
$$\Gamma_2 =\{t^{-1}e^{-i\varphi} : \vartheta \leq \varphi \leq 2\pi -\vartheta\},  
$$
and
$$\Gamma_3 =\{re^{i\vartheta} : t^{-1} \leq r < \infty \}.   
$$

Then, it follows that from \eqref{R.s.est} and \eqref{flow.def} that
\[e^{-\mathcal{A}t} = \frac{1}{2\pi i}
\sum_{k=1}^3 \int_{\Gamma_k} e^{-\lambda t}(\lambda -\mathcal{A})^{-1} d\lambda. 
\]

Note that at this point, the above identity is only understood as an identity in
$\mathcal{L}(\mathcal{V}^{1,\, 2}_s, \mathcal{V}^{1,\, 2}_s)$ with $s \in (0,1)$. However, 
from Lemma \ref{Res}, we obtain
\begin{equation*}
\begin{split}
& \norm{ (-\Delta)^{\sigma/2}
\int_{\Gamma_1} e^{-\lambda t}(\lambda -\mathcal{A})^{-1} d\lambda (-\Delta)^{-\alpha/2}
}_{\mathcal{V}^{1,\, 2} \rightarrow \mathcal{V}^{1,\, 2}} \\
& =  \norm{ (-\Delta)^{\sigma/2}
\int_{\Gamma_1} e^{-\lambda t}(\lambda -\mathcal{A})^{-1} d\lambda (-\Delta)^{-\alpha/2}
}_{\mathcal{V}^{1,\, 2} \rightarrow \mathcal{V}^{1,\, 2}} \\
& \les \int_{t^{-1}}^\infty e^{- tr\cos(\vartheta)}r^{\frac{\sigma -\alpha}{2} -1} dr \\
&\les [t \cos(\vartheta)]^{\frac{\alpha-\sigma}{2}}\int_{\cos(\vartheta)}^\infty 
e^{-s} s^{\frac{\sigma -\alpha}{2} -1} ds
 %\les t^{\frac{\alpha-\sigma}{2}}\int_{\cos(\gamma)}^\infty e^{-s} s^{\frac{\sigma -\alpha}{2}-1} ds
\les t^{\frac{\alpha-\sigma}{2}}.
\end{split}
\end{equation*}

Similarly, we also have the same estimate for the integral on $\Gamma_3$. Finally, using Lemma \ref{Res} again, 
we get
\begin{equation*}
\begin{split}
& \norm{ (-\Delta)^{\sigma/2}
\int_{\Gamma_2} e^{-\lambda t}(\lambda -\mathcal{A})^{-1} d\lambda (-\Delta)^{-\alpha/2}
}_{\mathcal{V}^{1,\, 2} \rightarrow \mathcal{V}^{1,\, 2}} \\
& \les t^{\frac{\alpha-\sigma}{2}} \int_{-\vartheta}^{\vartheta} e^{\cos(\varphi)} d\varphi
\les t^{\frac{\alpha-\sigma}{2}}. 
\end{split}
\end{equation*}

Thus, the first inequality in the lemma follows. %Also, note that those estimates are independent of the choice of $\vartheta$. 
The proof of the second one is similar. To see that, we use \eqref{flow.def} and \eqref{A.expension} to write
\begin{equation*}
\begin{split}
e^{-\mathcal{A}t} - 1 & = \int_{\Gamma} e^{-\lambda t}(\lambda +\Delta)^{-1} d\lambda -1 +
\int_{\Gamma} e^{-\lambda t} (\lambda +\Delta)^{-1}\mathcal{B} \mathcal{R}(\lambda) d\lambda \\
& = e^{\Delta t} -1 +  \int_{\Gamma} e^{-\lambda t} (\lambda +\Delta)^{-1}\mathcal{B} \mathcal{R}(\lambda) d\lambda. 
 \end{split}
\end{equation*}

The estimate of the first term on the right-hand side of the above equality follows from Theorem \ref{singular}.
The second one can be controlled exactly as what we just did using \eqref{se.est}. This completes the proof of 
the proposition.
\end{proof}
Next, note that if $||U||_{\mathcal{V}^{1,\, 2}} <\epsilon_1(s,s-2)$, 
and $f \in \mathcal{V}^{1,\, 2}_s$ 
it follows from Lemma \ref{Adomain} and Proposition \ref{A.decay} that 
$t^{-1}(e^{-\mathcal{A}t}f -f)$ and $\mathcal{A}f$ are both 
in $\mathcal{V}^{1,\, 2}_{s-2}$ and 
moreover,
\[ \norm{\frac{e^{-t\mathcal{A}}f-f}{t} + \mathcal{A}f}
_{\mathcal{V}^{1,\, 2}_{s-2}} \leq C(s)\norm{f}_{\mathcal{V}^{1,\, 2}_{s}}, 
\quad \forall~t >0. \]

However, this gives us no information 
on the differentiability of $e^{-\mathcal{A}t}$ at $t=0$. 
Our next result proves that $e^{-\mathcal{A}t}$ 
is differentiable at $0$ in a slightly different space.
%======================
\begin{proposition} \label{differentiability} 
Let $s, \sigma$ be two real numbers such that $0 < \sigma +2 \leq s \leq 
\sigma +4$, and $s \in (0,1)$.
Assume that 
\[ \norm{U}_{\mathcal{V}^{1,\, 2}} < \min\{\epsilon_1(s, s-2), \epsilon_1(s-2, \sigma+2)\}. \]
Then, for all 
$f \in \mathcal{V}^{1,\, 2}_s$, we have $ t^{-1}(e^{-\mathcal{A}t}f -f) +\mathcal{A}f$ 
is in $\mathcal{V}^{1,\, 2}_\sigma$, and moreover,
\begin{equation*}
 \left \Vert \frac{e^{-\mathcal{A}t}f -f}{t}  + \mathcal{A}f \right \Vert_{\mathcal{V}^{1,\, 2}_\sigma}
\leq C(s, \sigma) t^{\frac{s-\sigma}{2}-1}\norm{f}_{\mathcal{V}^{1,2}_s}, \quad \forall\,  t >0.
\end{equation*}
\end{proposition}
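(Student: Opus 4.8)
The plan is to reduce the difference quotient to a time integral of the operator $\mathcal{A}e^{-\mathcal{A}\tau}\mathcal{A}$ and then to estimate that operator by \emph{factoring it through the intermediate space} $\mathcal{V}^{1,\, 2}_{\sigma+2}$, using Lemma \ref{Adomain} on the two ends and Proposition \ref{A.decay} in the middle. The starting point is the elementary scalar identity $\frac{e^{-\lambda t}-1}{t}+\lambda=\frac{\lambda^2}{t}\int_0^t(t-\tau)e^{-\lambda\tau}\,d\tau$, valid for all $\lambda$ and $t>0$. Feeding it into the Dunford representation \eqref{flow.def} (equivalently, applying the fundamental theorem of calculus twice to $\tau\mapsto e^{-\mathcal{A}\tau}f$ and invoking Fubini), I would obtain, for $f\in\mathcal{V}^{1,\, 2}_s$,
$$\frac{e^{-\mathcal{A}t}f-f}{t}+\mathcal{A}f=\frac1t\int_0^t(t-\tau)\,\mathcal{A}e^{-\mathcal{A}\tau}\mathcal{A}f\,d\tau.$$
I would record separately that when $s=\sigma+2$ the claimed inequality is exactly the $\mathcal{V}^{1,\, 2}_{s-2}$-bound already established in the paragraph preceding the proposition (this is the role of $\epsilon_1(s,s-2)$ and of $s\in(0,1)$), so it remains to treat the strict range $\sigma+2<s\le\sigma+4$.

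For the integrand I would estimate $\mathcal{A}e^{-\mathcal{A}\tau}\mathcal{A}f=\mathcal{A}\,e^{-\mathcal{A}\tau}(\mathcal{A}f)$ along the chain $\mathcal{V}^{1,\, 2}_s\to\mathcal{V}^{1,\, 2}_{s-2}\to\mathcal{V}^{1,\, 2}_{\sigma+2}\to\mathcal{V}^{1,\, 2}_\sigma$. Indeed, since $s\in(0,1)$, Lemma \ref{Adomain} gives $\mathcal{A}f\in\mathcal{V}^{1,\, 2}_{s-2}$ with norm $\les(1+\norm{U}_{\mathcal{V}^{1,\, 2}})\norm{f}_{\mathcal{V}^{1,\, 2}_s}$; the first estimate of Proposition \ref{A.decay}, applied with $\alpha=s-2\le\sigma+2$ (this is precisely where the hypothesis $\norm{U}_{\mathcal{V}^{1,\, 2}}<\epsilon_1(s-2,\sigma+2)$ enters), gives $\norm{e^{-\mathcal{A}\tau}(\mathcal{A}f)}_{\mathcal{V}^{1,\, 2}_{\sigma+2}}\les\tau^{(s-\sigma)/2-2}\norm{\mathcal{A}f}_{\mathcal{V}^{1,\, 2}_{s-2}}$; and since $\sigma+2\in(0,1)$, Lemma \ref{Adomain} once more yields $\norm{\mathcal{A}h}_{\mathcal{V}^{1,\, 2}_\sigma}\les(1+\norm{U}_{\mathcal{V}^{1,\, 2}})\norm{h}_{\mathcal{V}^{1,\, 2}_{\sigma+2}}$. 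Composing, $\norm{\mathcal{A}e^{-\mathcal{A}\tau}\mathcal{A}f}_{\mathcal{V}^{1,\, 2}_\sigma}\les\tau^{(s-\sigma)/2-2}\norm{f}_{\mathcal{V}^{1,\, 2}_s}$. Substituting into the identity and computing $\int_0^t(1-\tau/t)\tau^{(s-\sigma)/2-2}\,d\tau=\frac{t^{(s-\sigma)/2-1}}{[(s-\sigma)/2-1]\,[(s-\sigma)/2]}$ — a convergent integral precisely because $s>\sigma+2$ — produces the desired bound, with a constant $C(s,\sigma)$ that degenerates as $s\downarrow\sigma+2$.

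The step I expect to be the main obstacle is the rigorous justification of the displayed integral identity on the scale $\{\mathcal{V}^{1,\, 2}_\alpha\}$. Because $f$ need not lie in $D(\mathcal{A})$ and $e^{-\mathcal{A}\tau}$ is not strongly continuous at $\tau=0$ on all of $\mathcal{V}^{1,\, 2}_s$, one cannot simply invoke a textbook Taylor formula; as in the proofs of Lemma \ref{Res} and Proposition \ref{A.decay}, the clean way is to establish the identity first in $\mathcal{L}(\mathcal{V}^{1,\, 2}_{s'},\mathcal{V}^{1,\, 2}_{s'})$ for an inner index $s'\in(0,1)$ (where the semigroup is classical), to conjugate by the fractional powers $(-\Delta)^{\pm\cdot/2}$ so that all the operators in play become Fourier multipliers bounded on $\mathcal{V}^{1,\, 2}$ by Theorem \ref{singular}, and then to pass to the bound between $\mathcal{V}^{1,\, 2}_s$ and $\mathcal{V}^{1,\, 2}_\sigma$ by continuous extension, using that the right-hand side is already a bounded map $\mathcal{V}^{1,\, 2}_s\to\mathcal{V}^{1,\, 2}_\sigma$ agreeing with the $\mathcal{V}^{1,\, 2}_{s-2}$-valued left-hand side on a core. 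The degeneration of $C(s,\sigma)$ as $s\to\sigma+2$ is genuine, which is exactly why the endpoint $s=\sigma+2$ must be taken from the preceding paragraph rather than obtained by a limit.
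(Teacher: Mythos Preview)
Your argument is correct but follows a different path from the paper. The paper stays on the resolvent side: after the change of variable $\mu=t\lambda$ in the Dunford integral \eqref{flow.def} it obtains
\[
\frac{e^{-t\mathcal{A}}f-f}{t}+\mathcal{A}f=\frac{1}{2\pi i}\int_{\Gamma'}\frac{e^{-\mu}}{\mu^{2}}\,\frac{t\mathcal{A}}{\mu}\Bigl(1-\tfrac{t}{\mu}\mathcal{A}\Bigr)^{-1}\mathcal{A}f\,d\mu,
\]
and estimates the integrand via the same chain $\mathcal{V}^{1,\,2}_{s}\to\mathcal{V}^{1,\,2}_{s-2}\to\mathcal{V}^{1,\,2}_{\sigma+2}\to\mathcal{V}^{1,\,2}_{\sigma}$ you propose, but invoking Lemma~\ref{Res} (the resolvent bound) rather than Proposition~\ref{A.decay} for the middle step. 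You instead pass to the time domain with the Taylor remainder $\frac{1}{t}\int_{0}^{t}(t-\tau)\mathcal{A}e^{-\mathcal{A}\tau}\mathcal{A}f\,d\tau$ and use the semigroup decay of Proposition~\ref{A.decay}. The factorization through $\mathcal{V}^{1,\,2}_{\sigma+2}$ and the use of Lemma~\ref{Adomain} at the two ends are common to both proofs. What the paper gains is uniformity: since the contour $\Gamma'$ is bounded away from $0$, the resulting $\mu$-integral converges for the whole closed range $\sigma+2\le s\le\sigma+4$, so no endpoint case and no blow-up of the constant. Your time integral $\int_{0}^{t}(1-\tau/t)\tau^{(s-\sigma)/2-2}\,d\tau$ forces you to split off $s=\sigma+2$ and to live with a constant that degenerates there; in exchange you avoid redoing a contour estimate and work only with the already established Proposition~\ref{A.decay}. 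Your concern about justifying the operator identity is well placed, and the remedy you outline (establish it first on $\mathcal{V}^{1,\,2}_{s'}$ for an inner index and extend) is exactly in the spirit of the paper; the paper's contour formula sidesteps this issue because the integrand is manifestly well defined for all $\mu\in\Gamma'$.
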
 
%================
\begin{proof} For each fixed $t>0$, from the Dunford integral \eqref{flow.def} 
and the change of variables $\mu=t\lambda$, we obtain
\begin{equation*} 
\begin{split}
\frac{e^{-t\mathcal{A}}f -f}{t} 
& = \frac{1}{2\pi i} \int_{\Gamma} \frac{e^{-\lambda t}}{t}\{(\lambda -\mathcal{A})^{-1} -\lambda^{-1} 
\}f d\lambda \\
& = \frac{1}{2\pi i}\int_{\Gamma'} 
\frac{e^{-\mu}}{\mu^2}\left (1- \frac{t}{\mu}\mathcal{A}\right )^{-1} \mathcal{A} f d\mu,
\end{split}
\end{equation*}
where $\Gamma' = \{t\lambda: \lambda \in \Gamma\}$ with $\Gamma$ being as in \eqref{flow.def}. 
Moreover, since
\[ \mathcal{A}f = -\frac{1}{2\pi i}\int_{\Gamma'} \frac{e^{-\mu}}{\mu^2}\mathcal{A}f d\mu, \]
it follows that
\[ 
\frac{e^{-t\mathcal{A}}f -f}{t} + \mathcal{A}f = 
\frac{1}{2\pi i} 
\int_{\Gamma'} \frac{e^{-\mu}}{\mu^2} \frac{t\mathcal{A}}{\mu} 
\left(1 -\frac{t}{\mu}\mathcal{A}\right)^{-1} 
\mathcal{A}f d\mu.\]

Since $||U||_{\mathcal{V}^{1,\, 2}} <\epsilon_1(s-2, \sigma+2)$, using 
Lemma \ref{Adomain} and Proposition \ref{Res}, we get
\begin{equation*}
\begin{split}
& \norm{\mathcal{A} \left(1 -\frac{t}{\mu}\mathcal{A}\right)^{-1} 
\mathcal{A}f}_{\mathcal{V}^{1,2}_{\sigma}} \\ 
& \leq \norm{\mathcal{A}}_{\mathcal{V}^{1,\, 2}_{\sigma +2} \rightarrow \mathcal{V}^{1,\, 2}_{\sigma}} \cdot 
\norm{\left(1 -\frac{t}{\mu}\mathcal{A}\right)^{-1}}_{\mathcal{V}^{1,\, 2}_{s-2} \rightarrow \mathcal{V}^{1,\, 2}_{\sigma+2}} \cdot
\norm{\mathcal{A}f}_{\mathcal{V}^{1,\, 2}_{s-2}} \\
& \les t^{\frac{s-\sigma}{2}-2}|\mu|^{\frac{\sigma-s}{2}+2}||f||_{\mathcal{V}^{1,\, 2}_s}.
\end{split}
\end{equation*}

Therefore, it follows that
\[\norm{\frac{e^{-t\mathcal{A}}f -f}{t} + \mathcal{A}f}_{\mathcal{V}^{1,\, 2}_\sigma}  
\leq C(s,\sigma)t^{\frac{s-\sigma}{2}-1}||f||_{\mathcal{V}^{1,\, 2}_s}, \]
and thus the proof is then complete.
\end{proof}
%============================

\begin{remark} It follows from the standard theory of semigroups 
\textup{(}see \cite{Lu, Pa, S}\textup{)} that  
\[ \lim_{t\rightarrow 0^+} \left [ \frac{e^{-\mathcal{A}t}f -f}{t} +\mathcal{A}f
\right ] =0 ,\quad \text{in the topology of }\, \mathcal{V}^{1,\, 2}_s \]
holds if and only if $f \in D(\mathcal{A}) \subset \mathcal{V}^{1,\, 2}_s$ and 
$\mathcal{A}f \in \overline{D(\mathcal{A})} \subset \mathcal{V}^{1,\, 2}_s$. However, 
this result is not sufficient for our purposes here.
\end{remark} 

\section{Stability of Stationary Solutions}\label{sec5}

Recall that $\delta_0 >0$ is defined in Theorem \ref{Stationary-Sol}. The main goal of this section 
is to prove Theorem \ref{stability-th}. To that end, we first determine the number $\delta_1$ claimed in Theorem \ref{stability-th}.
In order to do so we now fix $ 0 < \sigma_1 < 1/2$. Then by Theorem \ref{Stationary-Sol}, 
we can find 
$\delta_1 \leq \delta_0$ sufficiently small so that
for every $F$ with $\norm{F}_{\mathcal{V}^{1,\, 2}_{-2}} <\delta_1$, the solution $U$ of \eqref{NV} given by
Theorem \ref{Stationary-Sol} enjoys the estimate
\begin{equation*}
\begin{split}
\norm{U}_{\mathcal{V}^{1,\, 2}} < \min \{& 
\epsilon_1(\sigma_1-2, \sigma_1-2), \ \epsilon_1(\sigma_1,\sigma_1-2),\ 
\epsilon_1(\sigma_0, \sigma_0 -2),\\
& \epsilon_1(\sigma_0, \sigma_0),\ 
\epsilon_1(\sigma_0-2, \sigma_0),\ \epsilon_1(\sigma_1 -2, \sigma_1),\\ 
&\epsilon_1(1/2, -3/2),\ \epsilon_1(-3/2, \sigma_1)\}. 
\end{split}
\end{equation*}
The following result strengthens Proposition \ref{A.decay} in sense that it is uniform with respect to 
$\norm{U}_{\mathcal{V}^{1,\, 2}}$.
\begin{proposition}\label{A.uni}  Let $\sigma_0\in (1/2, 1)$. If $\norm{F}_{\mathcal{V}^{1,\, 2}_{-2}} < \delta_1$, then 
for every $\alpha, \sigma \in [\sigma_1 -2, \sigma_0]$ with $|\alpha - \sigma| \leq 2$, 
one has 
\begin{equation*}
\begin{split}
 \left \Vert e^{-\mathcal{A}t}\right\Vert_{\mathcal{V}^{1,\, 2}_{\alpha} 
\rightarrow \mathcal{V}^{1,\, 2}_{\sigma}}
 & \leq C(\sigma_0, \sigma_1) t^{(\alpha-\sigma)/2}, \quad \text{if~}  \alpha \leq \sigma, \\
 \left \Vert e^{-\mathcal{A}t} -1 \right\Vert_{\mathcal{V}^{1,\, 2}_{\alpha} 
\rightarrow \mathcal{V}^{1,\, 2}_{\sigma}} & \leq C(\sigma_0, \sigma_1) t^{(\alpha-\sigma)/2}, \quad 
\text{if~}  \alpha \geq \sigma.
\end{split}
\end{equation*}
\end{proposition}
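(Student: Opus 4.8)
The plan is to notice that the two inequalities are \emph{verbatim} those of Proposition \ref{A.decay}; the only new content is that the smallness threshold on $\norm{U}_{\mathcal{V}^{1,\, 2}}$ and the constant may be taken uniform over the whole compact parameter set
\[ K=\{(\alpha,\sigma)\in[\sigma_1-2,\sigma_0]^2:\ |\alpha-\sigma|\le 2\}. \]
So I would simply re-run the proofs of Lemma \ref{Res} and Proposition \ref{A.decay}, tracking every constant and showing it can be made to depend only on $\sigma_0$ and $\sigma_1$. Since the contour integrals in the proof of Proposition \ref{A.decay} are controlled by quantities depending only on $(\sigma-\alpha)/2\in[0,1]$ and the fixed angles $\gamma,\vartheta$, the whole matter reduces to making the constants in Lemma \ref{Res} uniform over $K$.

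The one place where $(\alpha,\sigma)$ enters those constants delicately is through the auxiliary exponent $\theta$ selected in \eqref{theta.cond}. My key device is to fix a \emph{uniform} selection of $\theta$. Writing $\bar\ell(\alpha,\sigma)=\max\{0,\alpha/2,\sigma/2\}$ and $\bar u(\alpha,\sigma)=\min\{1/2,\alpha/2+1,\sigma/2+1\}$, the conditions \eqref{theta.cond} say exactly that $\theta$ must lie in $[\bar\ell,\bar u]$. I would take
\[ \theta(\alpha,\sigma)=\mathrm{median}\{\bar\ell(\alpha,\sigma),\,1/4,\,\bar u(\alpha,\sigma)\}, \]
i.e. clamp the fixed value $1/4$ into $[\bar\ell,\bar u]$. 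A short case check, using $\sigma_1\in(0,1/2)$ and $\sigma_0\in(1/2,1)$, shows that this choice is admissible and satisfies $\theta(\alpha,\sigma)\in[\sigma_1/2,\ \sigma_0/2]\subset(0,1/2)$ for every $(\alpha,\sigma)\in K$, so that $2\theta\in[\sigma_1,\sigma_0]\subset(0,1)$.

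With $\theta$ confined to this \emph{fixed} compact subinterval, the constants of Lemma \ref{Res} become uniform. Indeed, the bound on $\mathcal{C}=(-\Delta)^{\theta-1}\mathcal{B}(-\Delta)^{-\theta}$ is $\norm{\mathcal{C}}\le C(2\theta)\norm{U}_{\mathcal{V}^{1,\, 2}}$, where $C(s)$ is the constant of Lemma \ref{Adomain} applied with $s=2\theta$; since $2\theta$ ranges in the compact set $[\sigma_1,\sigma_0]\subset(0,1)$, one has $C(2\theta)\le C(\sigma_0,\sigma_1)$. Likewise the two exponents $\sigma/2+1-\theta$ and $\theta-\alpha/2$ appearing in \eqref{series.est} lie in $[0,1]$ by \eqref{theta.cond}, so the corresponding constants from Lemma \ref{Laplace.decay} (with $b=1$) are bounded by $C(\sigma_0,\sigma_1)$ too. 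Consequently the invertibility threshold $\epsilon_1=1/(2C_1C_2)$ of Lemma \ref{Res} is bounded below by a positive $\epsilon_*(\sigma_0,\sigma_1)$, and the resolvent bounds \eqref{se.est}, \eqref{R.est} hold with a constant depending only on $\sigma_0,\sigma_1$, for all $(\alpha,\sigma)\in K$, as soon as $\norm{U}_{\mathcal{V}^{1,\, 2}}<\epsilon_*(\sigma_0,\sigma_1)$. This smallness is guaranteed by the choice of $\delta_1$, since the eight listed thresholds include the extremal configurations (the diagonal ends and the $|\alpha-\sigma|=2$ ends) that pin down the range of $\theta$. Feeding these uniform resolvent estimates into the Dunford-integral argument of Proposition \ref{A.decay} then yields both inequalities with a single constant $C(\sigma_0,\sigma_1)$.

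The main obstacle, and the only genuinely new point beyond Proposition \ref{A.decay}, is precisely this uniform control of $\theta$. The admissible interval $[\bar\ell,\bar u]$ degenerates to the single point $\max\{\alpha,\sigma\}/2$ when $|\alpha-\sigma|=2$, and it slides toward the endpoints $0$ and $1/2$ near the corners of $K$; the crux is to verify that the forced and extremal values still lie inside a compact subinterval of $(0,1/2)$. This is exactly where the strict inequalities $\sigma_1>0$ and $\sigma_0<1$ are used, and it is the reason the constant $C(s)$ of Lemma \ref{Adomain}—which degenerates as $s\to 0^+$ or $s\to 1^-$—stays bounded on the relevant range $[\sigma_1,\sigma_0]$.
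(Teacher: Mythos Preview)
Your approach is correct but genuinely different from the paper's. The paper does not re-run the argument of Lemma \ref{Res} with a uniform choice of $\theta$; instead it invokes Proposition \ref{A.decay} only at the finitely many endpoint pairs listed in the definition of $\delta_1$ and then appeals to a complex interpolation result (Proposition \ref{interpolation}, namely $[\mathcal{V}^{1,\,2}_{s_0},\mathcal{V}^{1,\,2}_{s_1}]_\theta=\mathcal{V}^{1,\,2}_{(1-\theta)s_0+\theta s_1}$) to fill in the whole square $K$ with a single constant. Your route is more hands-on: by clamping $\theta$ to the median of $\{\bar\ell,1/4,\bar u\}$ you force $2\theta\in[\sigma_1,\sigma_0]$, which keeps the constant $C(2\theta)$ from Lemma \ref{Adomain} uniformly bounded and hence produces a single threshold $\epsilon_*(\sigma_0,\sigma_1)$ and a single resolvent constant valid across $K$; the Dunford integral then goes through verbatim. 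The advantage of the paper's method is that it cleanly explains why a \emph{finite} list of $\epsilon_1$'s suffices and avoids any tracking of constants inside the proofs of Lemmas \ref{Adomain}--\ref{Res}; the advantage of yours is that it is entirely self-contained and does not require developing the Littlewood--Paley/weighted machinery behind Proposition \ref{interpolation}. One caveat: your claim that the eight listed thresholds already guarantee $\norm{U}_{\mathcal{V}^{1,\,2}}<\epsilon_*$ presumes that the paper's implicit choice of $\theta$ at those pairs coincides with yours; it is cleaner simply to say that $\delta_1$ can be taken small enough to ensure $\norm{U}_{\mathcal{V}^{1,\,2}}<\epsilon_*(\sigma_0,\sigma_1)$.
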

\begin{proof} Since
\begin{equation*}
\begin{split}
\norm{U}_{\mathcal{V}^{1,\, 2}} < \min \{& 
\epsilon_1(\sigma_1-2, \sigma_1-2), \ \epsilon_1(\sigma_1,\sigma_1-2),\ 
\epsilon_1(\sigma_0, \sigma_0 -2),\\
& \epsilon_1(\sigma_0, \sigma_0),\ 
\epsilon_1(\sigma_0-2, \sigma_0),\ \epsilon_1(\sigma_1 -2, \sigma_1)\}, 
\end{split}
\end{equation*}
our proposition follows directly from Proposition \ref{A.decay} 
and the following interpolation result.
\end{proof}

\begin{proposition}\label{interpolation} For $s_0, s_1 <1$, and $0 < \theta <1$, 
let $s = (1-\theta)s_0 + \theta s_1$. Then, the space 
$\mathcal{V}^{1,\, 2}_s$ coincides with the complex interpolation space $[\mathcal{V}^{1,\, 2}_{s_0}, 
\mathcal{V}^{1,\, 2}_{s_1} ]_{\theta}$.
\end{proposition}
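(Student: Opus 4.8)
The plan is to realize both spaces as images of $\mathcal{V}^{1,\, 2}=\mathcal{V}^{1,\, 2}_0$ under complex powers of the Laplacian and to transport the trivial identity $[\mathcal{V}^{1,\, 2},\mathcal{V}^{1,\, 2}]_\theta=\mathcal{V}^{1,\, 2}$ through an analytic family of such powers. The whole argument rests on one analytic input: for real $\tau$ the imaginary power $(-\Delta)^{i\tau/2}$, whose symbol is $|\xi|^{i\tau}$, is bounded on $\mathcal{V}^{1,\, 2}$ with at most polynomial growth in $\tau$. Indeed, writing $|\xi|^{i\tau}=e^{i\tau\log|\xi|}$ one checks that for every multi-index $\alpha$ with $|\alpha|\le n$ one has $|\partial_\xi^\alpha |\xi|^{i\tau}|\le C_n(1+|\tau|)^n|\xi|^{-|\alpha|}$, so that $P(\xi)=|\xi|^{i\tau}$ satisfies \eqref{HP} with $\sigma=0$ and a constant $\les(1+|\tau|)^n$. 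Theorem \ref{singular} (with $s=\sigma=0$) then gives
\[
\norm{(-\Delta)^{i\tau/2}h}_{\mathcal{V}^{1,\, 2}}\les (1+|\tau|)^n\,\norm{h}_{\mathcal{V}^{1,\, 2}},\qquad \tau\in\RR.
\]
Since each $\mathcal{V}^{1,\, 2}_\alpha$ is by definition the set of $f\in\mathcal{S}'/\mathcal{P}$ with $(-\Delta)^{\alpha/2}f\in\mathcal{V}^{1,\, 2}$, and the lifts $(-\Delta)^{\alpha/2}$ are isomorphisms of $\mathcal{S}'/\mathcal{P}$, the pair $(\mathcal{V}^{1,\, 2}_{s_0},\mathcal{V}^{1,\, 2}_{s_1})$ is a compatible couple inside $\mathcal{S}'/\mathcal{P}$ and the complex method applies.

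Next I would introduce the analytic family
\[
T_z=(-\Delta)^{((1-z)s_0+z s_1)/2},\qquad z\in \overline{S},\ \ S=\{0<\textup{Re}\,z<1\},
\]
which is analytic in $S$ and satisfies $T_\theta=(-\Delta)^{s/2}$ because $(1-\theta)s_0+\theta s_1=s$. On the boundary lines, writing $h=(-\Delta)^{s_0/2}f$, one has
\[
\norm{T_{it}f}_{\mathcal{V}^{1,\, 2}}=\norm{(-\Delta)^{i t(s_1-s_0)/2}h}_{\mathcal{V}^{1,\, 2}}\les (1+|t|)^n\norm{f}_{\mathcal{V}^{1,\, 2}_{s_0}},
\]
and symmetrically $\norm{T_{1+it}f}_{\mathcal{V}^{1,\, 2}}\les (1+|t|)^n\norm{f}_{\mathcal{V}^{1,\, 2}_{s_1}}$. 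Thus $T_{it}:\mathcal{V}^{1,\, 2}_{s_0}\to\mathcal{V}^{1,\, 2}$ and $T_{1+it}:\mathcal{V}^{1,\, 2}_{s_1}\to\mathcal{V}^{1,\, 2}$ are bounded with only polynomial growth in $t$. By the interpolation theorem for analytic families of operators (Stein's theorem for the complex method, the polynomial growth being absorbed by the Gaussian factor $e^{\delta(z^2-\theta^2)}$), the value $T_\theta=(-\Delta)^{s/2}$ maps $[\mathcal{V}^{1,\, 2}_{s_0},\mathcal{V}^{1,\, 2}_{s_1}]_\theta$ boundedly into $[\mathcal{V}^{1,\, 2},\mathcal{V}^{1,\, 2}]_\theta=\mathcal{V}^{1,\, 2}$. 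This is exactly the inclusion $[\mathcal{V}^{1,\, 2}_{s_0},\mathcal{V}^{1,\, 2}_{s_1}]_\theta\subset\mathcal{V}^{1,\, 2}_s$ with norm control.

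For the reverse inclusion I would run the identical argument with the inverse family $S_z=(-\Delta)^{-((1-z)s_0+z s_1)/2}$, for which $S_{it}:\mathcal{V}^{1,\, 2}\to\mathcal{V}^{1,\, 2}_{s_0}$ and $S_{1+it}:\mathcal{V}^{1,\, 2}\to\mathcal{V}^{1,\, 2}_{s_1}$ are bounded (again with polynomial growth, by the same imaginary-power estimate applied after the lift) and $S_\theta=(-\Delta)^{-s/2}$. Stein's theorem now yields $(-\Delta)^{-s/2}:\mathcal{V}^{1,\, 2}=[\mathcal{V}^{1,\, 2},\mathcal{V}^{1,\, 2}]_\theta\to[\mathcal{V}^{1,\, 2}_{s_0},\mathcal{V}^{1,\, 2}_{s_1}]_\theta$; applied to $g=(-\Delta)^{s/2}f$ this gives $\mathcal{V}^{1,\, 2}_s\subset[\mathcal{V}^{1,\, 2}_{s_0},\mathcal{V}^{1,\, 2}_{s_1}]_\theta$, completing the identification. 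Equivalently, one may make the construction explicit: for $f\in\mathcal{V}^{1,\, 2}_s$ the function $F(z)=e^{\delta(z^2-\theta^2)}S_z(-\Delta)^{s/2}f$ is an admissible analytic function with $F(\theta)=f$ and boundary norms controlled by $\norm{f}_{\mathcal{V}^{1,\, 2}_s}$.

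I expect the only genuine obstacle to be the bookkeeping of the admissibility hypotheses of the complex method: one must verify that the composed families are analytic and continuous up to the boundary as maps into $\mathcal{V}^{1,\, 2}_{s_0}+\mathcal{V}^{1,\, 2}_{s_1}$ (resp.\ $\mathcal{V}^{1,\, 2}$), and that the polynomial-in-$t$ growth of the operator norms—which is the real phenomenon here, since imaginary powers are \emph{not} uniformly bounded—is harmless. The standard remedy is the Gaussian regularizer $e^{\delta(z^2-\theta^2)}$, which equals $1$ at $z=\theta$, decays like $e^{-\delta t^2}$ on the vertical boundary lines, and hence dominates the factor $(1+|t|)^n$; letting $\delta\downarrow 0$ at the end recovers the clean norm bounds. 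Everything else is the formal transport of $[\mathcal{V}^{1,\, 2},\mathcal{V}^{1,\, 2}]_\theta=\mathcal{V}^{1,\, 2}$ and requires no new ideas.
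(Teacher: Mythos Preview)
Your argument is correct and follows a genuinely different route from the paper's own proof. The paper proceeds via a weighted Littlewood--Paley characterization: using results of Bui on weighted Triebel--Lizorkin spaces together with \cite[Lemma 3.1]{MV}, it shows that
\[
\norm{g}_{\mathcal{V}^{1,\, 2}_{\sigma}} \simeq \norm{\,\norm{\{\varPsi_{j}*g\}}_{\ell^{\sigma}_2}\,}_{\mathcal{V}^{1,\,2}},
\]
which reduces the question to the vector-valued identity $[\mathcal{V}^{1,\,2}(\ell^{s_0}_2), \mathcal{V}^{1,\,2}(\ell^{s_1}_2)]_{\theta}= \mathcal{V}^{1,\,2}(\ell^{s}_2)$; the retraction/coretraction principle then finishes. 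Your approach instead exploits only the boundedness of imaginary powers $(-\Delta)^{i\tau/2}$ on $\mathcal{V}^{1,\, 2}$ with polynomial growth in~$\tau$---a direct consequence of Theorem~\ref{singular} already available in the paper---and transports the trivial identity $[\mathcal{V}^{1,\, 2},\mathcal{V}^{1,\, 2}]_\theta=\mathcal{V}^{1,\, 2}$ through the analytic family $(-\Delta)^{((1-z)s_0+zs_1)/2}$, with the Gaussian corrector $e^{\delta(z^2-\theta^2)}$ absorbing the growth. This is more self-contained (no external Littlewood--Paley input or vector-valued interpolation lemma is needed) and is the classical ``bounded imaginary powers'' route to interpolation of abstract Sobolev scales. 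The paper's approach, on the other hand, yields as a byproduct a Littlewood--Paley description of $\mathcal{V}^{1,\,2}_\sigma$ that may be of independent use. Both methods leave the same kind of routine verification to the reader: in your case the admissibility (analyticity and boundary continuity) of the functions $F(z)$ in the sum space, in the paper's case the vector-valued identity \eqref{V.ell.inter}.
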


\begin{proof} For a definition of complex interpolation spaces we refer to the book \cite{BL}.
We first observe that for any $g\in\mathcal{S}'/\mathcal{P}$ and $\sigma\in\RR$ one has
\begin{equation}\label{LPweight}
\int_{\RR^n}|(-\Delta)^{\frac{\sigma}{2}}g(x)|^2w(x)dx \simeq \int_{\RR^n} \norm{\{\varPsi_{j}*g(\cdot)\}}_{\ell^{\sigma}_2}^2 w dx, 
\end{equation}
which holds for all weights $w$ belonging to the class $A_2$. For this see, e.g., 
Theorem 2.8, Remark 2.9, and Remark 4.5 in \cite{Bui}. In 
\eqref{LPweight}, the functions $\varPsi_j$, $j\in\mathbb{Z}$, are given by
$$\hat{\varPsi}_{j}(x)= \varPsi(2^{-j}x),$$
where $\varPsi $ is a function in $C^{\infty}_{0}(\RR^n)$ such
 that ${\rm supp} (\varPsi)=\{1/2 \leq |x|\leq 2\}$, $\varPsi(x)>0$ for $1/2<|x|<2$,
 and $\sum_{j=-\infty}^{\infty}\varPsi(2^{-j}x)=1$ for $x\not=0$.  Also,
for each $r\in\RR$ we have let $\ell^{r}_{2}$ denote the space of all sequences $\{a_j\}_{j=-\infty}^{\infty}$, $a_j\in \mathbb{C}$, 
such that
\begin{center}
$\norm{\{a_j\}}_{\ell^{r}_{2}}=\left\{\sum_{j=-\infty}^{\infty} (2^{jr}|a_j|)^2\right\}^{1/2}<+\infty.$
\end{center}

The equivalence \eqref{LPweight} and \cite[Lemma 3.1]{MV} then yield
\begin{equation}\label{LPcap}
\norm{g}_{\mathcal{V}^{1,\, 2}_{\sigma}} \simeq \norm{\norm{\{\varPsi_{j}*g(\cdot)\}}_{\ell^{\sigma}_2}}_{\mathcal{V}^{1,\,2}}, 
\qquad \sigma\in\RR.
\end{equation}

On the other hand, it can be seen that for  $s=(1-\theta)s_0+\theta s_1$ one has 
\begin{equation}\label{V.ell.inter}
 [\mathcal{V}^{1,\,2}(\ell^{s_0}_2), \mathcal{V}^{1,\,2}(\ell^{s_1}_2)]_{\theta}= \mathcal{V}^{1,\,2}(\ell^{s}_2)
\end{equation}
with equal norms.

Therefore, the proposition follows in a standard way using  \eqref{LPcap} and \eqref{V.ell.inter}.
\end{proof}

We shall prove Theorem \ref{stability-th} by using the next two Propositions. The first one asserts 
the existence and uniqueness of the solution to the integral equation \eqref{inte.form}. The second one
confirms that this solution is in fact the solution of the equation \eqref{KZ}. 

%=======================
\begin{proposition}\label{stability-lemma} Let $\sigma_0 \in (1/2,1)$ be as in Theorem \ref{stability-th}.
There exists a sufficiently small positive number $\epsilon_0$ 
such that, for every $F \in \mathcal{V}^{1,\, 2}_{-2}$, $w^0 \in \mathcal{V}^{1,\, 2}$ satisfying 
$\norm{F}_{\mathcal{V}^{1,\, 2}_{-2}} <\delta_1$, $\nabla \cdot w^0 =0$ and $\norm{w^0}_{\mathcal{V}^{1,\, 2}} < \epsilon_0$, 
there is a unique, time-global solution $w(x,t)$ of the integral equation \eqref{inte.form} 
which satisfies
\[ \sup_{t >0}t^{1/4}\norm{w}_{\mathcal{V}^{1,\, 2}_{1/2}} \leq C\norm{w^0}_{\mathcal{V}^{1,\, 2}}, \]
and for every $\alpha \in [-1, 0]$, the estimate
\begin{equation} \label{al.Y}
\sup_{t >0 }t^{\alpha/2}\norm{w -w^0}_{\mathcal{V}^{1,\, 2}_\alpha} \leq C\norm{w^0}_{\mathcal{V}^{1,\, 2}}
\end{equation}
holds true. Moreover, for every $\sigma \in [0, \sigma_0]$, the solution $w$ also enjoys the estimate
\begin{equation} \label{sigma-w}
\sup_{t>0} t^{\sigma/2}\norm{w}_{\mathcal{V}^{1,\, 2}_\sigma} \leq C(\sigma_0) \norm{w^0}_{\mathcal{V}^{1,\, 2}}.
\end{equation}
\end{proposition}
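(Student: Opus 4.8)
The plan is to solve the integral equation \eqref{inte.form} by Picard iteration, applying the fixed point Lemma \ref{fixedpoint} in the Kato-type space
$$\mathcal{X}=\Big\{w:\ \norm{w}_{\mathcal{X}}:=\sup_{t>0}t^{1/4}\norm{w(\cdot,t)}_{\mathcal{V}^{1,\, 2}_{1/2}}<+\infty\Big\},$$
writing $y_0(t)=e^{-\mathcal{A}t}w^0$ and
$$B(w,v)(t)=-\int_0^t e^{-\mathcal{A}(t-s)}\mathbb{P}\nabla\cdot[w(\cdot,s)\otimes v(\cdot,s)]\,ds,$$
so that \eqref{inte.form} reads $w=y_0+B(w,w)$. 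The index $1/2$ together with the weight $t^{1/4}$ is forced by the scaling invariance of \eqref{NSE}. First I would record the linear bound: Proposition \ref{A.uni} with $(\alpha,\sigma)=(0,1/2)$ gives $\norm{e^{-\mathcal{A}t}w^0}_{\mathcal{V}^{1,\, 2}_{1/2}}\les t^{-1/4}\norm{w^0}_{\mathcal{V}^{1,\, 2}}$, hence $\norm{y_0}_{\mathcal{X}}\les\norm{w^0}_{\mathcal{V}^{1,\, 2}}<C\epsilon_0$.

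The crux is the bilinear bound $\norm{B(w,v)}_{\mathcal{X}}\les\norm{w}_{\mathcal{X}}\norm{v}_{\mathcal{X}}$, resting on a product estimate. Theorem \ref{Sobcap} with $p=2$, $\alpha=1/2$, applied exactly as in \eqref{So-in}, yields $\sup_{K}{\rm cap}_{1,\,2}(K)^{-1}\int_{K}|w|^4\,dx\les\norm{w}_{\mathcal{V}^{1,\, 2}_{1/2}}^4$; since $|w\otimes v|=|w|\,|v|$, Cauchy--Schwarz on each compact $K$ then gives
$$\norm{w\otimes v}_{\mathcal{V}^{1,\, 2}}\les\norm{w}_{\mathcal{V}^{1,\, 2}_{1/2}}\norm{v}_{\mathcal{V}^{1,\, 2}_{1/2}}.$$
By Theorem \ref{singular} the symbol of $\mathbb{P}\nabla\cdot$ has order $1$, so $\mathbb{P}\nabla\cdot$ maps $\mathcal{V}^{1,\, 2}$ into $\mathcal{V}^{1,\, 2}_{-1}$, whence $\mathbb{P}\nabla\cdot[w\otimes v](\cdot,s)\in\mathcal{V}^{1,\, 2}_{-1}$ with the same product bound. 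Proposition \ref{A.uni} with $(\alpha,\sigma)=(-1,1/2)$ supplies the smoothing estimate $\norm{e^{-\mathcal{A}(t-s)}(\,\cdot\,)}_{\mathcal{V}^{1,\, 2}_{1/2}}\les(t-s)^{-3/4}\norm{w(s)}_{\mathcal{V}^{1,\, 2}_{1/2}}\norm{v(s)}_{\mathcal{V}^{1,\, 2}_{1/2}}$; inserting $\norm{w(s)}_{\mathcal{V}^{1,\, 2}_{1/2}}\le s^{-1/4}\norm{w}_{\mathcal{X}}$ reduces the time integral to the convergent Beta integral $\int_0^t(t-s)^{-3/4}s^{-1/2}\,ds=c\,t^{-1/4}$, so that $t^{1/4}\norm{B(w,v)(t)}_{\mathcal{V}^{1,\, 2}_{1/2}}\les\norm{w}_{\mathcal{X}}\norm{v}_{\mathcal{X}}$. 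Choosing $\epsilon_0$ small enough that the smallness requirement of Lemma \ref{fixedpoint} holds, that lemma produces a unique $w\in\mathcal{X}$ with $\norm{w}_{\mathcal{X}}\le2\norm{y_0}_{\mathcal{X}}\les\norm{w^0}_{\mathcal{V}^{1,\, 2}}$; this is the first claimed estimate, and uniqueness holds in the associated ball.

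The other two estimates I would obtain a posteriori from $w=y_0+B(w,w)$, reusing the same two ingredients with a different target index. For \eqref{sigma-w}, $\sigma\in[0,\sigma_0]$, Proposition \ref{A.uni} with $(0,\sigma)$ bounds the linear term by $t^{-\sigma/2}$, while $(-1,\sigma)$ on the bilinear term leaves $\int_0^t(t-s)^{-(1+\sigma)/2}s^{-1/2}\,ds=c\,t^{-\sigma/2}$, convergent as $\sigma<1$; together these give $t^{\sigma/2}\norm{w}_{\mathcal{V}^{1,\, 2}_\sigma}\les\norm{w^0}_{\mathcal{V}^{1,\, 2}}$. For \eqref{al.Y}, $\alpha\in[-1,0]$, I would write $w-w^0=(e^{-\mathcal{A}t}-1)w^0-B(w,w)$, use the ``$e^{-\mathcal{A}t}-1$'' part of Proposition \ref{A.uni} with $(0,\alpha)$ to control the linear term by $t^{-\alpha/2}$, and the smoothing estimate with $(-1,\alpha)$ on the bilinear term to reach $\int_0^t(t-s)^{-(1+\alpha)/2}s^{-1/2}\,ds=c\,t^{-\alpha/2}$ (convergent for $\alpha<1$); multiplying by $t^{\alpha/2}$ gives \eqref{al.Y}. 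Finally I would verify that every index pair used, namely $(0,1/2)$, $(-1,1/2)$, $(0,\sigma)$, $(-1,\sigma)$, $(0,\alpha)$, $(-1,\alpha)$, lies in $[\sigma_1-2,\sigma_0]$ with gap at most $2$, so that the uniform constants of Proposition \ref{A.uni} apply whenever $\norm{F}_{\mathcal{V}^{1,\, 2}_{-2}}<\delta_1$. The main obstacle is the bilinear bound of the second paragraph: pushing the product into $\mathcal{V}^{1,\, 2}$ through the capacitary $L^4$ embedding and then balancing the semigroup decay exponent $-3/4$ against the singular weight $s^{-1/2}$ so that the time integral converges exactly to the critical rate $t^{-1/4}$; every other step is a variant of this single computation.
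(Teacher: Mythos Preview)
Your proposal is correct and follows essentially the same approach as the paper: a fixed-point argument in the Kato-type space with weight $t^{1/4}$ at regularity $1/2$, using Proposition \ref{A.uni} for the semigroup smoothing and Theorem \ref{Sobcap} for the product estimate $\norm{w\otimes v}_{\mathcal{V}^{1,\,2}}\les\norm{w}_{\mathcal{V}^{1,\,2}_{1/2}}\norm{v}_{\mathcal{V}^{1,\,2}_{1/2}}$, then deriving \eqref{al.Y} and \eqref{sigma-w} a posteriori by re-running the bilinear bound at different target indices. The only blemish is an inconsequential sign slip in $w-w^0=(e^{-\mathcal{A}t}-1)w^0-B(w,w)$ (with your definition of $B$ it should be $+B(w,w)$), which of course does not affect the norm estimates.
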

\begin{proof} Let us define another space
\[ \mathbb{Y} = \{ f : (0,\infty) \rightarrow 
\mathcal{V}^{1,\, 2}_{1/2} \, \, \text{with}\, \, \norm{f}_{\mathbb{Y}} 
= \sup_{t > 0}t^{1/4} \norm{f(\cdot, t)}_{\mathcal{V}^{1,\, 2}_{1/2}} < \infty  \}.
\]

We are aiming to show, using Lemma \ref{fixedpoint},  that there exists a solution $w$ of \eqref{KZ} in $\mathbb{Y}$. To this end, 
let us set $y_0 = e^{-\mathcal{A} t}w^0$. Then, from Proposition \ref{A.uni}, it follows that
\begin{equation} \label{y0.est} \norm{y_0}_{\mathbb{Y}} = t^{1/4}\left \Vert y_0 \right\Vert_{\mathcal{V}^{1,\, 2}_{1/2}} 
\les \norm{w^0}_{\mathcal{V}^{1,\, 2}}.  \end{equation}
Thus, $y_0 \in \mathbb{Y}$. Now, we only need to estimate the following bi-linear map:
\[\wt{B}[w,v](t) = -\int_0^t e^{-\mathcal{A}(t-s)}\mathbb{P}\nabla\cdot[w(\cdot, s)\otimes v(\cdot, s)]ds, \quad 
w , v \in \mathbb{Y}.  \]

Indeed, it follows from Proposition \ref{A.uni}, Theorem \ref{riesz}, and Corollary \ref{pro} that
\begin{equation*}
\begin{split}
\left \Vert \wt{B}[w,v](t)\right \Vert_{\mathcal{V}^{1,\, 2}_{1/2}} & \leq
\int_0^t \left \Vert e^{-\mathcal{A}(t-s)}\mathbb{P}\nabla\cdot[w(\cdot, s)\otimes v(\cdot, s)]
\right \Vert_{\mathcal{V}^{1,\, 2}_{1/2}}ds\\
& \les \int_0^t (t-s)^{-3/4} \norm{\mathbb{P}\nabla\cdot[w(\cdot, s)\otimes v(\cdot, s)]}_{\mathcal{V}^{1,\, 2}_{-1}}\, ds \\
&  \les \int_0^t (t-s)^{-3/4} \norm{w(\cdot, s)\otimes v(\cdot, s)}_{\mathcal{V}^{1,\, 2}} \, ds.
\end{split}
\end{equation*}
Thus, by using H\"{o}lder's inequality and Theorem \ref{Sobcap}, we get
\begin{eqnarray*}
\left \Vert \wt{B}[w,v](t)\right \Vert_{\mathcal{V}^{1,\, 2}_{1/2}} &\les&  
\int_0^t (t-s)^{-3/4} \norm{|w(\cdot, s)|^2}_{\mathcal{V}^{1,\, 2}}^{1/2}
\norm{|v(\cdot, s)|^2}_{\mathcal{V}^{1,\, 2}}^{1/2}\,
 ds\\
&\les& \int_0^t (t-s)^{-3/4} \norm{w(\cdot, s)}_{\mathcal{V}^{1,\, 2}_{1/2}} \norm{v(\cdot, s)}_{\mathcal{V}^{1,\, 2}_{1/2}}\, ds \\
& \les&  \norm{w}_{\mathbb{Y}}\norm{v}_{\mathbb{Y}} \int_0^t (t-s)^{-3/4} s^{-1/2}\, ds \\
& \les & t^{-1/4}\norm{w}_{\mathbb{Y}} \norm{v}_{\mathbb{Y}}.
\end{eqnarray*}

This yields
\begin{equation} \label{Bwt.est} \norm{\wt{B}[w,v]}_{\mathbb{Y}} \les \norm{w}_{\mathbb{Y}}\norm{v}_{\mathbb{Y}}, 
\quad \forall \,  w, v \in \mathbb{Y}.
\end{equation}

It follows from Lemma \ref{fixedpoint} and the estimates \eqref{y0.est}--\eqref{Bwt.est}
 that there exists an $\epsilon_0 >0$ sufficiently small such that if $
\norm{w^0}_{\mathcal{V}^{1,\, 2}} <\epsilon_0$, there is a unique solution $w$ of 
\eqref{inte.form} such that
\[ \norm{w}_{\mathbb{Y}} \leq 2\norm{y_0}_{\mathbb{Y}} \les \norm{w^0}_{\mathcal{V}^{1,\, 2}}.  \]

Next, we shall prove \eqref{al.Y}. For every $\alpha \in [-1, \sigma_0]$, by Proposition \ref{A.uni} and 
as in the proof of the estimate of $\wt{B}$ on $\mathbb{Y} \times \mathbb{Y}$, we obtain
\begin{equation} \label{w-eAw0}
 \begin{split}
\Vert w -e^{-\mathcal{A}t}w^0 \Vert_{\mathcal{V}^{1,\, 2}_\alpha} & = \norm{\wt{B}[w,w](t)}_{\mathcal{V}^{1,\, 2}_\alpha}  \\
& \les \int_0^t (t-s)^{-\frac{\alpha+1}{2}} \norm{w(\cdot, s) \otimes w(\cdot, s)}_{\mathcal{V}^{1,\, 2}}\, ds\\
&\les \int_0^t (t-s)^{-\frac{\alpha+1}{2}} \norm{w(\cdot, s)}_{\mathcal{V}^{1,\, 2}_{1/2}}^2\, ds\\
& \les \norm{w}_{\mathbb{Y}}^2\int_0^t (t-s)^{-\frac{\alpha+1}{2}}s^{-1/2} ds \les t^{-\alpha/2}\norm{w}_{\mathbb{Y}}^2.
 \end{split}
\end{equation}

Moreover, if we restrict $\alpha \in [-1, 0]$, Proposition \ref{A.uni} also yields
\begin{equation} \label{eA-1}
\norm{(e^{-\mathcal{A}t} -1)w^0}_{\mathcal{V}^{1,\, 2}_\alpha} 
\les t^{-\alpha/2}\norm{w^0}_{\mathcal{V}^{1,\, 2}}.
\end{equation}

Thus, it follows from \eqref{w-eAw0} and \eqref{eA-1} that for  $\alpha \in [-1,0]$ we have
\begin{eqnarray*}
t^{\alpha/2}\norm{w(\cdot, t)-w^0}_{\mathcal{V}^{1,\, 2}_\alpha} &=& 
t^{\alpha/2}\norm{w(\cdot, t)-e^{-\mathcal{A}t}w^0 +e^{-\mathcal{A}t}w^0 -w^0}_{\mathcal{V}^{1,\, 2}_\alpha}\\
&\les& \norm{w}_{\mathbb{Y}}^2 + \norm{w^0}_{\mathcal{V}^{1,\, 2}}  \les \norm{w^0}_{\mathcal{V}^{1,\, 2}},
\end{eqnarray*}
which proves \eqref{al.Y}. 

Finally, for $\sigma \in [0, \sigma_0]$, Proposition \ref{A.uni} implies that
\[ \norm{e^{-\mathcal{A}t} w^0}_{\mathcal{V}^{1,\, 2}_\sigma} \les t^{-\sigma/2}\norm{w^0}_{\mathcal{V}^{1,\, 2}}. \]
Using this and \eqref{w-eAw0} (with  $\sigma$ in place of $\alpha$), we get \eqref{sigma-w}. 
This completes the proof of the lemma.
\end{proof}
%==============================
To prove that the solution $w(x,t)$ of the integral equation \eqref{inte.form} 
obtained in Proposition \ref{stability-lemma} is the solution of \eqref{KZ}, we 
need to following  inequality:
\begin{lemma}\label{Ho-the} Let $\sigma$ be in $(0,1)$ and $s_1, s_2$ be in $[0,1)$ such that
$\sigma + s_1 + s_2 =1$. Then, there is $C= C(\sigma, s_1, s_2) >0$ such that
\[\norm{f\otimes g}_{\mathcal{V}^{1,\, 2}_{-\sigma}} \leq C\norm{f}_{\mathcal{V}^{1,\, 2}_{s_1}}
\norm{g}_{\mathcal{V}^{1,\, 2}_{s_2}}. \] 
\end{lemma}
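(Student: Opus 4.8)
The plan is to reduce the estimate to a pointwise Riesz-potential bound and then run the Sobolev--capacitary embedding of Theorem \ref{Sobcap} twice, with the exponents arranged so that the arithmetic closes exactly. Recall that, up to a positive constant, the operator $(-\Delta)^{-\sigma/2}$ coincides with convolution by the Riesz kernel ${\rm\bf I}_{\sigma}$, and that $\sigma\in(0,1)\subset(0,n)$ so the potential is meaningful. Writing $f\otimes g$ componentwise as $(f_ig_j)$ and using that the Riesz kernel is nonnegative together with $|f_ig_j|\le |f|\,|g|$, one obtains the pointwise domination
\[ \m{(-\Delta)^{-\sigma/2}(f\otimes g)} \les {\rm\bf I}_{\sigma}*(|f|\,|g|). \]
Since the $\mathcal{V}^{1,\, 2}$ norm is monotone under pointwise domination of nonnegative functions (being a supremum of the averages $\int_K|\cdot|^2/{\rm cap}_{1,\, 2}(K)$), it suffices to prove $\norm{{\rm\bf I}_{\sigma}*(|f|\,|g|)}_{\mathcal{V}^{1,\, 2}}\les \norm{f}_{\mathcal{V}^{1,\, 2}_{s_1}}\norm{g}_{\mathcal{V}^{1,\, 2}_{s_2}}$.

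First I would place $|f|$ and $|g|$ in suitable capacitary Morrey spaces. Setting $q_i=2/(1-s_i)$, the estimate \eqref{So-in} derived in the proof of Lemma \ref{Adomain} (which is exactly Theorem \ref{Sobcap} with $p=2$ and $\alpha=s_i$; for the endpoint $s_i=0$ one has $q_i=2$ and the statement reduces to the definition of $\mathcal{V}^{1,\, 2}$) gives
\[ \left[\sup_K\frac{\int_K|f|^{q_1}\,dx}{{\rm cap}_{1,\, 2}(K)}\right]^{1/q_1}\les \norm{f}_{\mathcal{V}^{1,\, 2}_{s_1}}, \qquad \left[\sup_K\frac{\int_K|g|^{q_2}\,dx}{{\rm cap}_{1,\, 2}(K)}\right]^{1/q_2}\les \norm{g}_{\mathcal{V}^{1,\, 2}_{s_2}}, \]
where the hypothesis $0<s_i<2/2=1$ required by Theorem \ref{Sobcap} holds because $s_i\in[0,1)$.

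Next I would combine these by Hölder's inequality on each compact set $K$. Since $s_1+s_2=1-\sigma$, one computes $1/q_1+1/q_2=(1+\sigma)/2$, so with $r:=2/(1+\sigma)\in(1,2)$ Hölder with exponents $q_1/r$ and $q_2/r$ yields
\[ \int_K(|f|\,|g|)^r\,dx \le \Big(\int_K|f|^{q_1}\Big)^{r/q_1}\Big(\int_K|g|^{q_2}\Big)^{r/q_2} \les \big(\norm{f}_{\mathcal{V}^{1,\, 2}_{s_1}}\norm{g}_{\mathcal{V}^{1,\, 2}_{s_2}}\big)^{r}\,{\rm cap}_{1,\, 2}(K), \]
because $r/q_1+r/q_2=1$. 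Thus $|f|\,|g|$ belongs to the capacitary Morrey space of exponent $r$ with the correct norm. Finally I would apply Theorem \ref{Sobcap} a second time, now with $p=r$ and $\alpha=\sigma$; the condition $0<\sigma<2/r=1+\sigma$ is clearly satisfied, and the output exponent $2r/(2-\sigma r)$ equals exactly $2$ by the choice of $r$. This produces $\norm{{\rm\bf I}_{\sigma}*(|f|\,|g|)}_{\mathcal{V}^{1,\, 2}}\les \norm{f}_{\mathcal{V}^{1,\, 2}_{s_1}}\norm{g}_{\mathcal{V}^{1,\, 2}_{s_2}}$, which together with the pointwise reduction above closes the proof.

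The only genuinely delicate point is the exponent bookkeeping: the whole argument works precisely because the relation $\sigma+s_1+s_2=1$ forces $r=2/(1+\sigma)$ to be the unique exponent for which the second application of Theorem \ref{Sobcap} lands back in $\mathcal{V}^{1,\, 2}$ (output exponent $=2$) while still respecting $\sigma<2/r$. I expect no serious analytic obstacle beyond verifying these arithmetic constraints and the endpoint cases $s_1=0$ or $s_2=0$, where the first embedding step degenerates harmlessly into the definition of $\mathcal{V}^{1,\, 2}$.
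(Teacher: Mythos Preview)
Your proposal is correct and follows essentially the same route as the paper: both proofs use Theorem~\ref{Sobcap} twice (once with $\alpha=\sigma$, $p=2/(1+\sigma)$ and once with $\alpha=s_i$, $p=2$) together with a single H\"older step, and the exponent arithmetic is identical. The only cosmetic difference is the order of operations---the paper applies the ${\rm\bf I}_\sigma$ embedding first, then H\"older, then the ${\rm\bf I}_{s_i}$ embeddings, whereas you reverse this order---but the two arrangements are trivially equivalent.
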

\begin{proof} Using Theorem  \ref{Sobcap} with $\alpha = \sigma, 
p =\frac{2}{1+\sigma}$ and then applying H\"{o}lder inequality, we get
\[\norm{f\otimes g}_{\mathcal{V}^{1,\, 2}_{-\sigma}} 
\les \sup_{K}\left \{ \frac{\int_K|f|^{\frac{2}{1-s_1}}}{{\rm cap}_{1,\, 2}(K)} \right \}^{\frac{1-s_1}{2}} \cdot
\sup_{K}\left \{ \frac{\int_K |g|^{\frac{2}{1-s_2}}}{{\rm cap}_{1,\, 2}(K)} \right \}^{\frac{1-s_2}{2}}. \] 
Here, the suprema are taken over all compact sets $K$ with ${\rm cap}_{1,\, 2}(K) >0$. 
Our desired result follows by again applying the Theorem \ref{Sobcap}. 
\end{proof}
%========================================
\begin{proposition}\label{KZ.solution} For every $F, w^0$ satisfying $\norm{F}_{\mathcal{V}^{1,\, 2}_{-2}} < \delta_1$, 
$\nabla \cdot w^0 =0$ and 
$\norm{w^0}_{\mathcal{V}^{1,\, 2}} < \epsilon_0$, let $w(x,t)$ be a solution of the integral 
equation \eqref{inte.form}
as in Proposition \ref{stability-lemma}. Then, $w$ satisfies \eqref{KZ} in the sense of 
tempered distributions. 
\end{proposition}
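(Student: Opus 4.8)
The plan is to upgrade the mild solution $w$ furnished by Proposition \ref{stability-lemma} to a genuine strong solution of \eqref{KZ} for every $t>0$, with values in the negative space $\mathcal{V}^{1,\, 2}_{-3/2}$. Since $\mathcal{V}^{1,\, 2}_{-3/2}$ injects into $\mathcal{S}'/\mathcal{P}$, a pointwise-in-$t$ identity in $\mathcal{V}^{1,\, 2}_{-3/2}$ immediately yields \eqref{KZ} in the sense of tempered distributions, which is all that is claimed. Writing $G(s)=\mathbb{P}\nabla\cdot[w(\cdot,s)\otimes w(\cdot,s)]$, the integral equation \eqref{inte.form} reads $w(t)=e^{-\mathcal{A}t}w^0-\int_0^t e^{-\mathcal{A}(t-s)}G(s)\,ds$, and the formal computation we must justify is simply $\partial_t w=-\mathcal{A}e^{-\mathcal{A}t}w^0-G(t)+\mathcal{A}\int_0^t e^{-\mathcal{A}(t-s)}G(s)\,ds=-\mathcal{A}w-G$, using $\frac{d}{dt}e^{-\mathcal{A}t}=-\mathcal{A}e^{-\mathcal{A}t}$ recorded after \eqref{flow.def}. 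Everything reduces to making this differentiation rigorous in the present nonstandard setting.

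First I would control the nonlinearity $G$. Taking $\sigma=1/2$ and $s_1=s_2=1/4$ in Lemma \ref{Ho-the} gives $\norm{w(\cdot,s)\otimes w(\cdot,s)}_{\mathcal{V}^{1,\, 2}_{-1/2}}\les \norm{w(\cdot,s)}_{\mathcal{V}^{1,\, 2}_{1/4}}^2$, and since $\mathbb{P}\nabla\cdot$ is an order-one operator bounded on these scales by Corollary \ref{pro} and Theorem \ref{singular}, one obtains $G(s)\in\mathcal{V}^{1,\, 2}_{-3/2}$. Combining with the bound $\norm{w(\cdot,s)}_{\mathcal{V}^{1,\, 2}_{1/4}}\les s^{-1/8}\norm{w^0}_{\mathcal{V}^{1,\, 2}}$ supplied by \eqref{sigma-w} yields $\norm{G(s)}_{\mathcal{V}^{1,\, 2}_{-3/2}}\les s^{-1/4}\norm{w^0}_{\mathcal{V}^{1,\, 2}}^2$, which is integrable near $s=0$. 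The same bilinearity, together with the integral equation \eqref{inte.form} and the smoothing estimates of Proposition \ref{A.uni}, also shows that $s\mapsto w(\cdot,s)$, and hence $s\mapsto G(s)$, is locally H\"older continuous on $(0,\infty)$ with values in $\mathcal{V}^{1,\, 2}_{1/4}$ and $\mathcal{V}^{1,\, 2}_{-3/2}$ respectively; here one estimates $w(t)-w(\tau)$ by inserting the factor $(e^{-\mathcal{A}(t-\tau)}-1)$ on the linear term and splitting the Duhamel integral, each piece being controlled by Proposition \ref{A.uni}.

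With these two ingredients the differentiation proceeds by the classical analytic-semigroup machinery (cf. \cite{Lu,Pa,S}). The linear term is handled by Proposition \ref{differentiability} and the semigroup property, giving $\frac{d}{dt}e^{-\mathcal{A}t}w^0=-\mathcal{A}e^{-\mathcal{A}t}w^0$ in $\mathcal{V}^{1,\, 2}_{-3/2}$ for $t>0$. For the Duhamel term $v(t)=-\int_0^t e^{-\mathcal{A}(t-s)}G(s)\,ds$, I would split $\int_0^t=\int_0^{t/2}+\int_{t/2}^t$: on $(0,t/2)$ the kernel $e^{-\mathcal{A}(t-s)}$ is smoothing with $t-s$ bounded below, so $\mathcal{A}$ passes under the integral using Lemma \ref{Adomain}, Lemma \ref{Res} and the integrable singularity of $G$; on $(t/2,t)$ one writes $G(s)=(G(s)-G(t))+G(t)$ and uses the local H\"older continuity to absorb the singularity of $\mathcal{A}e^{-\mathcal{A}(t-s)}$ at $s=t$, exactly as in the standard proof that the mild solution is differentiable. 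This produces $v'(t)=-\mathcal{A}v(t)-G(t)$ in $\mathcal{V}^{1,\, 2}_{-3/2}$, and adding the two contributions gives $\partial_t w=-\mathcal{A}w-G$ there for every $t>0$, i.e. \eqref{KZ}; the initial condition is already encoded in \eqref{al.Y}.

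The main obstacle is precisely Step three: because $D(\mathcal{A})$ need not be dense in the relevant $\mathcal{V}^{1,\, 2}_s$ and $e^{-\mathcal{A}t}$ is not strongly continuous at $t=0$, the textbook theorems do not apply verbatim, and one must instead lean on the extended theory invoked after \eqref{flow.def} and on the sharp resolvent bounds of Lemma \ref{Res}. The time-singularity $s^{-1/4}$ of $G(s)$ as $s\to0^+$ compounds this, forcing the careful endpoint analysis of the Duhamel integral above. It is exactly here that the particular indices built into $\delta_1$—notably $\epsilon_1(1/2,-3/2)$ and $\epsilon_1(-3/2,\sigma_1)$—are needed, since they guarantee the smoothing and resolvent estimates between the spaces $\mathcal{V}^{1,\, 2}_{1/2}$, $\mathcal{V}^{1,\, 2}_{-3/2}$ in which $w$, $\mathcal{A}w$ and $G$ naturally live.
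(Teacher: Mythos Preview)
Your proposal is correct and shares the two essential ingredients with the paper---local H\"older continuity of $w$ (hence of the nonlinearity) on $(0,\infty)$, and the differentiability statement of Proposition~\ref{differentiability}---but the bookkeeping is organized differently. You differentiate the linear piece $e^{-\mathcal{A}t}w^0$ and the Duhamel integral $\int_0^t e^{-\mathcal{A}(t-s)}G(s)\,ds$ separately from $s=0$, which forces the careful ``pass $\mathcal{A}$ under the integral'' argument you flag as the main obstacle. The paper avoids that step entirely by \emph{restarting} the integral equation at a positive time $\tau=t_1$ (see \eqref{in.tau}): one writes the full difference quotient $[w(t_2)-w(t_1)]/(t_2-t_1)$ as $\tfrac{e^{-(t_2-t_1)\mathcal{A}}-1}{t_2-t_1}\,w(t)-\mathbb{P}\nabla\cdot[w(t)\otimes w(t)]$ plus three explicit remainders $T_1,T_2,T_3$, and applies Proposition~\ref{differentiability} directly to $f=w(t)\in\mathcal{V}^{1,2}_{1/2}$ with $\sigma=\sigma_1-2$; the $T_j$ vanish using only the H\"older bounds \eqref{w.holder}--\eqref{nonl-hol} and Proposition~\ref{A.uni}. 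This is cleaner precisely because the semigroup acts on $w(t_1)$, which already lives in $\mathcal{V}^{1,2}_{1/2}$, so no splitting of the Duhamel kernel near $s=0$ is needed. One small mismatch in your write-up: you set the target space to $\mathcal{V}^{1,2}_{-3/2}$, but the two constants actually recorded in $\delta_1$, namely $\epsilon_1(1/2,-3/2)$ and $\epsilon_1(-3/2,\sigma_1)$, correspond via Proposition~\ref{differentiability} to $s=1/2$, $\sigma=\sigma_1-2$ (so $\sigma+2=\sigma_1$), not $\sigma=-3/2$; either work in $\mathcal{V}^{1,2}_{\sigma_1-2}$ as the paper does, or note that the missing resolvent bound follows from the interpolation in Proposition~\ref{interpolation}.
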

\begin{proof} For each $0 < a < b <\infty$ and for $a \leq \tau < t \leq b$, by \eqref{inte.form} we have
\begin{align} \label{in.tau}
w(\cdot, t) - w(\cdot, \tau) & = [e^{-(t-\tau)\mathcal{A}}-1]w(\cdot, \tau) \\ \notag
& \quad - \int_\tau^t e^{-(t-s)\mathcal{A}}\mathbb{P}\nabla \cdot [w(\cdot, s)\otimes w(\cdot, s)]ds.
\end{align}

Then, using Theorem \ref{singular}, Theorem \ref{Sobcap} and Proposition \ref{A.uni}, we obtain
\begin{align}\label{w.holder}
& \norm{w(\cdot, t) - w(\cdot, \tau)}_{\mathcal{V}^{1,\, 2}_{\sigma_1}} \\ \notag
%& \les (t-\tau)^{(1/2-\sigma_1)/2}\norm{w(\tau)}_{\mathcal{V}^{1,\, 2}_{1/2}} +   
%\int_{\tau}^t(t-s)^{-\frac{1+\sigma}{2}}\norm{\mathbb{P}\nabla w(s)\otimes w(s)}_{\mathcal{V}^{1,\, 2}_{-1}}ds
%\\ \notag
& \les (t-\tau)^{(1/2-\sigma_1)/2}\norm{w(\tau)}_{\mathcal{V}^{1,\, 2}_{1/2}} + 
\int_{\tau}^t(t-s)^{-\frac{1+\sigma}{2}}\norm{w(s)}_{\mathcal{V}^{1,\, 2}_{1/2}}^2ds
\\ \notag 
& \leq C(a,b) \left [ (t-\tau)^{1/4 -\sigma_1/2} + (t-\tau)^{(1-\sigma_1)/2} \right ]\norm{w^0}_{\mathcal{V}^{1,\, 2}}. 
\end{align}

On the other hand, it follows from Theorem \ref{singular} and Lemma \ref{Ho-the} that
\begin{equation}\label{nonl-hol}
\begin{split}
& \norm{\mathbb{P}\nabla [w(\cdot, t)\otimes w(\cdot, t) -
 w(\cdot, \tau)\otimes w(\cdot, \tau) ]}_{\mathcal{V}^{1,\, 2}_{\sigma_1 -2}} \\
& \les \norm{w(\cdot, t)\otimes w(\cdot, t) -
 w(\cdot, \tau)\otimes w(\cdot, \tau)}_{\mathcal{V}^{1,\, 2}_{\sigma_1-1}}\\
& \les \norm{w(\cdot, t) - w(\cdot, \tau)}_{\mathcal{V}^{1,\, 2}_{\sigma_1}} \,
\sup_{\tau\in [a, b]}\norm{w(\cdot, \tau)}_{\mathcal{V}^{1,\, 2}}\\
& \les \norm{w(\cdot, t) - w(\cdot, \tau)}_{\mathcal{V}^{1,\, 2}_{\sigma_1}}. 
\end{split}
\end{equation}

Now, for each fixed $t >0$, let $a < b < \infty$ be two numbers such that $a < t <b$. Then, 
for each $t_1, t_2$ such that $a < t_1 < t < t_2 < b$, from \eqref{in.tau}, we get
\[ \frac{w(t_2) - w(t_1)}{t_2 -t_1} =\frac{e^{-(t_2-t_1)\mathcal{A}} -1}{t_2 -t_1}w(t) 
-\mathbb{P}\nabla \cdot[w(t)\otimes w(t)] + T_1 - T_2 - T_3. \]
Here,
\begin{equation*}
\begin{split}
T_1 & = \frac{e^{-(t_2-t_1)\mathcal{A}} -1}{t_2-t_1}[w(t_1) -w(t)], \\
T_2 & = \frac{1}{t_2-t_1}\int_{t_1}^{t_2}e^{-(t_2 -s)\mathcal{A}}\mathbb{P}\nabla\cdot 
[w(s)\otimes w(s) -
w(t)\otimes w(t)] ds, \\
T_3 & = \frac{1}{t_2-t_1}\int_{t_1}^{t_2}\left\{e^{-(t_2-s)\mathcal{A}} -1\right\}
\mathbb{P}\nabla\cdot [w(t)\otimes w(t)] ds.
\end{split}
\end{equation*}

Since
\[ \norm{U}_{\mathcal{V}^{1,\, 2}} < \min\{\epsilon_1(1/2, -3/2), \, \epsilon_1(-3/2, \sigma_1)\}, \]
we can apply Proposition \ref{differentiability} with $s =1/2$ and $\sigma = \sigma_1-2$ to get
\[ \lim_{t_2,\, t_1 \rightarrow t}\frac{e^{-(t_2-t_1)\mathcal{A}} -1}{t_2 -t_1}w(t)  
= -\mathcal{A} w(t), \quad \text{in} \quad \mathcal{V}^{1,\, 2}_{\sigma_1 -2}.  \]

On the other hand, by applying Proposition \ref{A.uni}, Theorem \ref{Sobcap} and 
the estimates \eqref{w.holder}, \eqref{nonl-hol}, we find
$$\norm{T_1}_{\mathcal{V}^{1,\, 2}_{\sigma_1-2}}  \les \norm{w(t_1) -w(t)}_{\mathcal{V}^{1,\, 2}_{\sigma_1}} 
\rightarrow 0, \quad \text{as} \quad t_1, t_2 \rightarrow t,
$$
and 
\begin{align*}
 \norm{T_2}_{\mathcal{V}^{1,\, 2}_{\sigma_1-2}} & \les \frac{1}{t_2-t_1}\int_{t_1}^{t_2}
\norm{\mathbb{P}\nabla\cdot [w(s)\otimes w(s) - w(t)\otimes w(t)]}_{\mathcal{V}^{1,\, 2}_{\sigma_1-2}} ds\\
& \les \sup_{t_1 \leq s \leq t_2}\norm{w(s) -w(t)}_{\mathcal{V}^{1,\, 2}_{\sigma_1}} 
\rightarrow 0, \quad \text{as} \quad t_1, t_2 \rightarrow t.
\end{align*}

Moreover, it follows from Theorem \ref{singular}, Proposition \ref{A.uni} and Lemma \ref{Ho-the} 
that 
\begin{align*}
\norm{T_3}_{\mathcal{V}^{1,\, 2}_{\sigma_1-2}}  & \les  \frac{1}{t_2-t_1}\int_{t_1}^{t_2}(t_2-s)^{1/4}
\norm{\mathbb{P}\nabla\cdot [w(t)\otimes w(t)]}_{\mathcal{V}^{1,\, 2}_{\sigma_1-\frac{3}{2}}} ds \\
& \les (t_2-t_1)^{1/4}\norm{w(t)\otimes w(t)}_{\mathcal{V}^{1,\, 2}_{\sigma_1-\frac{1}{2}}} \\
& \les (t_2-t_1)^{1/4}\norm{w(t)}_{\mathcal{V}^{1,\, 2}_{\frac{1}{4} + \frac{\sigma_1}{2}}}^2 
\rightarrow 0, \quad \text{as} \quad t_1, t_2 \rightarrow t.
\end{align*}

In conclusion, we have
\[w_t + \mathcal{A} w(t) + \mathbb{P}\nabla\cdot [w(t)\otimes w(t)] =0, \quad
\text{in} \quad \mathcal{S}', \]
and this completes the proof of the proposition.
\end{proof}

We are finally in a position to prove Theorem \ref{stability-th}.
\begin{proof}[Proof of Theorem \ref{stability-th}] Let $\delta_1$ and $\epsilon_0$ 
be as in Proposition \ref{stability-lemma}
and let $u(t)=w(t)-U$ where $w(t)$ is the unique solution of 
the integral equation \eqref{inte.form} with initial datum
$w^0= u_0-U$ as obtained in Proposition \ref{stability-lemma}. 
It follows from Proposition \ref{stability-lemma} and Proposition \ref{KZ.solution} that
$u(t)$ is a solution of \eqref{NSE} which satisfies all of the estimates stated in 
Theorem \ref{stability-th}. The uniqueness of $u$ 
follows directly from the fact that if $u$ 
is any solution of \eqref{NSE} satisfying the estimate \eqref{V1/2}, 
then $w =u-U$ is a solution of \eqref{inte.form} with initial datum
$w^0 =u_0-U$ and $w$ satisfies all the estimates in Proposition \ref{stability-lemma}.
\end{proof}
\begin{remark}\label{ref2} \label{MC-remark} We would like to point out that one 
can follow the approach in \cite{CC, CCP, MC} \textup{(}see also \cite[Chapter 15]{L-R}\textup{)} to  prove 
the existence of $u$ satisfying only 
\eqref{sigma-u.est} with $\sigma =0$ in a much simpler way. Indeed, instead of \eqref{inte.form}
 we write
\[ w(t) = e^{t\Delta }w^0 -\int_0^t e^{(t-s)\Delta}\mathbb{P}\nabla \cdot[ w \otimes U + U \otimes w + w \otimes w] ds, \]
and let $Y = \{w \in (L^\infty(\mathcal{V}^{1,2}))^n: \sup_{t>0} |w(t,\cdot)| \in \mathcal{V}^{1,2}\}$ 
with norm 
\[ \norm{w}_{Y} = \norm{\sup_{t>0 }|w(t, \cdot)|}_{\mathcal{V}^{1,2}} . \]

Then by Theorem \ref{IV} we have $\sup_{t>0}|e^{t\Delta }w^0|\leq C {\rm\bf M} w^0\in \mathcal{V}^{1,\, 2}$. On the other hand,
for $(v, w)\in Y\times Y$, writing $V= \sup_{t>0}|v(t, \cdot)|$ and $W= \sup_{t>0}|w(t, \cdot)|$ we find 
\begin{eqnarray*}
\left | \int_0^t e^{(t-s)\Delta}\mathbb{P}\nabla \cdot (v \otimes w) ds\right | &\leq& C \int_0^t\int_{\RR^n} \frac{ V(y)  W(y)}{(\sqrt{t-s}+|x-y|)^{n+1}} ds dy\\
&\leq& C {\rm\bf I}_1 *(VW).
\end{eqnarray*}

This gives 
\begin{equation*}
\norm{ \int_0^t e^{(t-s)\Delta}\mathbb{P}\nabla \cdot (U \otimes w+ w \otimes U ) ds}_{Y} \leq C 
\norm{U}_{\mathcal{V}^{1,\, 2}}\norm{w}_{Y}, 
\end{equation*}
and 
\begin{equation*}
\norm{ \int_0^t e^{(t-s)\Delta}\mathbb{P}\nabla \cdot ( v\otimes w) ds}_{Y} \leq C 
\norm{v}_{Y}\norm{w}_{Y}. 
\end{equation*}

Now, the existence of $u$ can be proved by the standard fixed point argument, which also implies 
\eqref{sigma-u.est} when $\sigma =0$. 
\end{remark}
\ \\ 
{\bf Acknowledgement.} The authors would like to thank the anonymous referee for valuable comments and suggestion that help improve the manuscript of the paper.  In particular, the authors would like to thank the referee for pointing 
out Remark \ref{ref2}.

%\noindent{\bf Acknowledgments.} The authors deeply  thank the referee for his/her 
%thoughtful inputs regarding the optimality of $\mathcal{V}^{1,\, 2}$ 
%and the techniques used in this paper that result in Remarks \ref{V12opt} and \ref{ref2}. 
%His/Her comments and suggestions also improves the presentation of the paper.  

\end{document}